\DeclareFontFamily{U}{mathx}{\hyphenchar\font45}
\DeclareFontShape{U}{mathx}{m}{n}{
      <5> <6> <7> <8> <9> <10>
      <10.95> <12> <14.4> <17.28> <20.74> <24.88>
      mathx10
      }{}
\DeclareSymbolFont{mathx}{U}{mathx}{m}{n}
\DeclareMathAccent{\widecheck}{0}{mathx}{"71}
\DeclareMathAccent{\wideparen}{0}{mathx}{"75}
\newcommand{\C}{\mathbb{C}}
\newcommand{\N}{\mathbb{N}}
\newcommand{\R}{\mathbb{R}}
\newcommand{\T}{\mathbb{T}}
\newcommand{\Z}{\mathbb{Z}}
\newcommand{\Ad}{\operatorname{Ad}}
\newcommand{\rk}{\operatorname{rk}}
\newcommand{\dist}{\operatorname{dist}}
\newtheorem{teor}{Theorem}[section]
\newtheorem{propop}[teor]{Proposition}
\newtheorem{lemma}[teor]{Lemma}
\newtheorem{definition}[teor]{Definition}
\newtheorem{corol}[teor]{Corollary}
\theoremstyle{remark}
\theoremstyle{remark}
\newtheorem{remark}{Remark}
\newcommand{\nwc}{\newcommand}
\nwc{\nwt}{\newtheorem}
\nwc{\mf}{\mathbf} %Latex (as in \bf not tilted math letters)
\nwc{\blds}{\boldsymbol} %Latex 
\nwc{\ml}{\mathcal} %Latex
\nwc{\lam}{\lambda}
\nwc{\del}{\delta}
\nwc{\Del}{\Delta}
\nwc{\Lam}{\Lambda}
\nwc{\elll}{\ell}
\nwc{\IA}{\mathbb{A}} %algebraic
\nwc{\IB}{\mathbb{B}} %ball
\nwc{\IC}{\mathbb{C}} %complex
\nwc{\ID}{\mathbb{D}} %Dedekind
\nwc{\IE}{\mathbb{E}} %Euklides
\nwc{\IF}{\mathbb{F}} %finite field
\nwc{\IG}{\mathbb{G}} %Gauss
\nwc{\IH}{\mathbb{H}} %Hilbert\N-subgroup
\nwc{\IN}{\mathbb{N}} %natural
\nwc{\IP}{\mathbb{P}} %prime
\nwc{\IQ}{\mathbb{Q}} %rational
\nwc{\IR}{\mathbb{R}} %real
\nwc{\IS}{\mathbb{S}} %sphere
\nwc{\IT}{\mathbb{T}} %torus
\nwc{\IZ}{\mathbb{Z}} %integers
\def\bbleft{{\mathchoice {[\mskip-3mu {[}} {[\mskip-3mu {[}}{[\mskip-4mu {[}}{[\mskip-5mu {[}}}}
\def\bbright{{\mathchoice {]\mskip-3mu {]}} {]\mskip-3mu {]}}{]\mskip-4mu {]}}{]\mskip-5mu {]}}}}
\nwc{\setK}{\bbleft 1,K \bbright}
\nwc{\setN}{\bbleft 1,\cN \bbright}
\nwc{\va}{{\bf a}}
\nwc{\vb}{{\bf b}}
\nwc{\vc}{{\bf c}}
\nwc{\vd}{{\bf d}}
\nwc{\ve}{{\bf e}}
\nwc{\vf}{{\bf f}}
\nwc{\vg}{{\bf g}}
\nwc{\vh}{{\bf h}}
\nwc{\vi}{{\bf i}}
\nwc{\vI}{{\bf I}}
\nwc{\vj}{{\bf j}}
\nwc{\vk}{{\bf k}}
\nwc{\vl}{{\bf l}}
\nwc{\vm}{{\bf m}}
\nwc{\vM}{{\bf M}}
\nwc{\vn}{{\bf n}}
\nwc{\vo}{{\it o}}
\nwc{\vp}{{\bf p}}
\nwc{\vq}{{\bf q}}
\nwc{\vr}{{\bf r}}
\nwc{\vs}{{\bf s}}
\nwc{\vt}{{\bf t}}
\nwc{\vu}{{\bf u}}
\nwc{\vv}{{\bf v}}
\nwc{\vw}{{\bf w}}
\nwc{\vx}{{\bf x}}
\nwc{\vy}{{\bf y}}
\nwc{\vz}{{\bf z}}
\nwc{\bal}{\blds{\alpha}}
\nwc{\bep}{\blds{\epsilon}}
\nwc{\barbep}{\overline{\blds{\epsilon}}}
\nwc{\bnu}{\blds{\nu}}
\nwc{\bmu}{\blds{\mu}}
\nwc{\bet}{\blds{\eta}}
\nwc{\bk}{\blds{k}}
\nwc{\bm}{\blds{m}}
\nwc{\bM}{\blds{M}}
\nwc{\bp}{\blds{p}}
\nwc{\bq}{\blds{q}}
\nwc{\bn}{\blds{n}}
\nwc{\bv}{\blds{v}}
\nwc{\bw}{\blds{w}}
\nwc{\bx}{\blds{x}}
\nwc{\bxi}{\blds{\xi}}
\nwc{\by}{\blds{y}}
\nwc{\bz}{\blds{z}}
\nwc{\cA}{\ml{A}}
\nwc{\cB}{\ml{B}}
\nwc{\cC}{\ml{C}}
\nwc{\cD}{\ml{D}}
\nwc{\cE}{\ml{E}}
\nwc{\cF}{\ml{F}}
\nwc{\cG}{\ml{G}}
\nwc{\cH}{\ml{H}}
\nwc{\cI}{\ml{I}}
\nwc{\cJ}{\ml{J}}
\nwc{\cK}{\ml{K}}
\nwc{\cL}{\ml{L}}
\nwc{\cM}{\ml{M}}
\nwc{\cN}{\ml{N}}
\nwc{\cO}{\ml{O}}
\nwc{\cP}{\ml{P}}
\nwc{\cQ}{\ml{Q}}
\nwc{\cR}{\ml{R}}
\nwc{\cS}{\ml{S}}
\nwc{\cT}{\ml{T}}
\nwc{\cU}{\ml{U}}
\nwc{\cV}{\ml{V}}
\nwc{\cW}{\ml{W}}
\nwc{\cX}{\ml{X}}
\nwc{\cY}{\ml{Y}}
\nwc{\cZ}{\ml{Z}}
\nwc{\fA}{\mathfrak{a}}
\nwc{\fB}{\mathfrak{b}}
\nwc{\fC}{\mathfrak{c}}
\nwc{\fD}{\mathfrak{d}}
\nwc{\fE}{\mathfrak{e}}
\nwc{\fF}{\mathfrak{f}}
\nwc{\fG}{\mathfrak{g}}
\nwc{\fH}{\mathfrak{h}}
\nwc{\fI}{\mathfrak{i}}
\nwc{\fJ}{\mathfrak{j}}
\nwc{\fK}{\mathfrak{k}}
\nwc{\fL}{\mathfrak{l}}
\nwc{\fM}{\mathfrak{m}}
\nwc{\fN}{\mathfrak{n}}
\nwc{\fO}{\mathfrak{o}}
\nwc{\fP}{\mathfrak{p}}
\nwc{\fQ}{\mathfrak{q}}
\nwc{\fR}{\mathfrak{r}}
\nwc{\fS}{\mathfrak{s}}
\nwc{\fT}{\mathfrak{t}}
\nwc{\fU}{\mathfrak{u}}
\nwc{\fV}{\mathfrak{v}}
\nwc{\fW}{\mathfrak{w}}
\nwc{\fX}{\mathfrak{x}}
\nwc{\fY}{\mathfrak{y}}
\nwc{\fZ}{\mathfrak{z}}
\nwc{\tA}{\widetilde{A}}
\nwc{\tB}{\widetilde{B}}
\nwc{\tE}{E^{\vareps}}
\nwc{\tk}{\tilde k}
\nwc{\tN}{\tilde N}
\nwc{\tP}{\widetilde{P}}
\nwc{\tQ}{\widetilde{Q}}
\nwc{\tR}{\widetilde{R}}
\nwc{\tV}{\widetilde{V}}
\nwc{\tW}{\widetilde{W}}
\nwc{\ty}{\tilde y}
\nwc{\teta}{\tilde \eta}
\nwc{\tdelta}{\tilde \delta}
\nwc{\tlambda}{\tilde \lambda}
\nwc{\ttheta}{\tilde \theta}
\nwc{\tvartheta}{\tilde \vartheta}
\nwc{\tPhi}{\widetilde \Phi}
\nwc{\tpsi}{\tilde \psi}
\nwc{\tmu}{\tilde \mu}
\nwc{\To}{\longrightarrow} %limits
\nwc{\ad}{\rm ad}
\nwc{\eps}{\varepsilon}
\nwc{\ep}{\epsilon}
\nwc{\vareps}{\varepsilon}
\def\ep{\epsilon}
\def\tr{{\rm tr}}
\def\sq2{\sqrt{2}}
\def\t2{{\mathbb T}^2}
\def\s2{{\mathbb S}^2}
\def\N{\mathbb{N}}
\def\T{\mathbb{T}}
\def\R{\mathbb{R}}
\def\Z{\mathbb{Z}}
\def\C{\mathbb{C}}
\nwc{\lap}{\bigtriangleup}
\nwc{\rest}{\restriction}
\nwc{\Diff}{\operatorname{Diff}}
\nwc{\diam}{\operatorname{diam}}
\nwc{\Res}{\operatorname{Res}}
\nwc{\Spec}{\operatorname{Sp}}
\nwc{\Vol}{\operatorname{Vol}}
\nwc{\Op}{\operatorname{Op}}
\nwc{\supp}{\operatorname{supp}}
\nwc{\Span}{\operatorname{span}}
\nwc{\weaksc}{\overset{\ast}{\rightharpoonup}}
\nwc{\charf}{\mathds{1}}
\nwc{\notimplies}{\centernot\implies}
\nwc{\dia}{\varepsilon}
\nwc{\cut}{f}
\nwc{\qm}{u_\hbar}
\def\hto0{\xrightarrow{\hbar\to 0}}
\def\rto0{\xrightarrow{r\to 0}}
\nwc{\la}{\langle}
\nwc{\ra}{\rangle}
\nwc{\lp}{\left(}
\nwc{\rp}{\right)}
\nwc{\bequ}{\begin{equation}}
\nwc{\be}{\begin{equation}}
\nwc{\ben}{\begin{equation*}}
\nwc{\bea}{\begin{eqnarray}}
\nwc{\bean}{\begin{eqnarray*}}
\nwc{\bit}{\begin{itemize}}
\nwc{\bver}{\begin{verbatim}}
\nwc{\eequ}{\end{equation}}
\nwc{\ee}{\end{equation}}
\nwc{\een}{\end{equation*}}
\nwc{\eea}{\end{eqnarray}}
\nwc{\eean}{\end{eqnarray*}}
\nwc{\eit}{\end{itemize}}
\nwc{\ever}{\end{verbatim}}
\title{Concentration of Quasimodes for Perturbed Harmonic Oscillators}
\author[Víctor Arnaiz]{Víctor Arnaiz}
\address{Laboratoire de Mathématiques Jean Leray, Université de Nantes. UMR CNRS 6629, 2
	rue de la Houssinière, 44322 Nantes Cedex 03, FRANCE.}
\email{victor.arnaiz@univ-nantes.fr}
\author[Fabricio Macià]{Fabricio Macià}
\address{M$^2$ASAI, Universidad Politécnica de Madrid.  \newline
ETSI Navales, Avda. de la Memoria, 4. 28040 Madrid, SPAIN.}
\email{fabricio.macia@upm.es}
\begin{document}

\begin{abstract}
In this work, concentration properties of quasimodes for perturbed semiclassical harmonic oscillators are studied. The starting point of this research comes from the fact that, in the presence of resonances between frequencies of the harmonic oscillator and for a generic bounded perturbation, the set of semiclassical measures for quasimodes of sufficiently small width is strictly smaller than the corresponding set of semiclassical measures for the unperturbed system. In this work we precise the description of this set taking into account the Diophantine properties of the vector of frequencies, the separation between clusters of eigenvalues in the spectrum produced by the presence of a perturbation, and the dynamical properties of the classical Hamiltonian flow generated by the average of the symbol of the perturbation by the harmonic oscillator flow. In particular, for the perturbed two-dimensional periodic harmonic oscillator, we characterize the set of semiclassical measures of quasimodes with a width that is critical for this problem. Two of the main ingredients in the proof of these results, which are of independent interest, are: (i) a novel construction of quasimodes based on propagation of coherent states by several commuting flows and (ii) a general quantum Birkhoff normal form for harmonic oscillators.
\end{abstract}

\maketitle

\section{Introduction}

\subsection{The problem and the setting}
\label{motivation}

This article is devoted to the study of eigenfunctions in $L^2(\R^{d})$ of perturbations of Schrödinger operators of the form 
\[
- \frac{1}{2} \hbar^2 \Delta_x + \frac{1}{2} x \cdot Qx,
\] 
with $Q$ a positive definite $d\times d$ matrix with real entries. 
Such an operator is unitarily equivalent to a quantum harmonic oscillator $\widehat{H}_\hbar$:
\begin{equation}
\label{quantum_harmonic_oscillator}
\widehat{H}_\hbar: = \frac{1}{2} \sum_{j=1}^d \omega_j \big( - \hbar^2 \partial_{x_j}^2 +  x_j^2 \big), \quad \omega_j >0,
\end{equation}
where $\omega_1^2\leq \ldots \leq \omega_d^2 $ are the eigenvalues of $Q$.
We will refer to  $\omega = (\omega_1, \ldots , \omega_d) \in \R^d_+$ as the \textit{vector of frequencies}. Recall that the spectrum of $\widehat{H}_\hbar$ in $L^2(\R^d)$ is discrete, since $\widehat{H}_\hbar$ has compact resolvent, and the sequence of eigenvalues  tends to infinity for any fixed $\hbar$. 

Let $  (\varepsilon_\hbar)$ be a sequence of positive real numbers satisfying $\varepsilon_\hbar \to 0$ as $\hbar \to 0^+$, we consider semiclassical perturbations of $\widehat{H}_\hbar$ of the form
\begin{equation}
\label{e:perturbed_quantum_harmonic_oscillator}
\widehat{P}_\hbar := \widehat{H}_\hbar + \varepsilon_\hbar \widehat{V}_\hbar,
\end{equation}
where $(\widehat{V}_\hbar)_{0<\hbar<1}$ is a uniformly bounded family of self-adjoint operators on $L^2(\R^d)$. Notice that, with these assumptions, the spectrum of $\widehat{P}_\hbar$ is still discrete, and remains at distance $O(\varepsilon_\hbar)$ of that of $\widehat{H}_\hbar$.

We aim at studying the asymptotic properties of sequences of solutions $(\Psi_\hbar)$ to the stationary problem
\begin{equation}
\label{e:stationary_problem}
\widehat{P}_\hbar \, \Psi_\hbar = \lambda_\hbar \, \Psi_\hbar, \quad \Vert \Psi_\hbar \Vert_{L^2(\R^d)} = 1,\quad \text{with }\lambda_\hbar \to 1 \text{ as } \hbar \to 0^+,
\end{equation}
where $\widehat{V}_\hbar=\Op_\hbar(V)$ is the semiclassical Weyl quantization of a symbol $V \in \mathcal{C}^\infty(\R^{2d};\R)$ which is bounded together with all its derivatives (that is, $V \in S^0(\R^{2d})$).

This regime is known as the \textit{semiclassical limit}; the quantum to classical correspondence principle in quantum mechanics states that for $\hbar > 0$ small, the behavior of a solution $\Psi_\hbar$ to \eqref{e:stationary_problem}, and more precisely of its \text{position probability density} $\vert \Psi_\hbar \vert^2$, should be related to the dynamics of the underlying classical system. At least in the unperturbed case $\varepsilon_\hbar=0$, the classical system is the Hamiltonian flow of the classical harmonic oscillator; this is the flow 
\[
\phi^H_t\,:\, T^*\R^d \To T^*\R^d,\quad t\in\R
\]
 of the Hamiltonian vector field $X_H:=\sum_{j=1}^d\omega_j(\xi_j\partial_{x_j}-x_j\partial_{\xi_j})$, which is obtained from the classical Hamiltonian
\begin{equation}
\label{classical_harmonic_oscillator}
H(x,\xi) = \frac{1}{2}  \sum_{j=1}^d  \omega_j \big( \xi_j^2 +  x_j^2 \big), \quad (x,\xi) \in T^*\R^d=\R^{d}_x\times\R^d_\xi.
\end{equation}
Using the semiclassical Weyl quantiation, we have that $\widehat{H}_\hbar=\Op_\hbar(H)$. It is natural in this kind of problems to lift to phase-space $T^*\R^d$ all objects of interest, since this is where the classical Hamiltonian flow is defined. Given a normalized function $\psi \in L^2(\R^d)$, the distribution of energy of $\psi$ in the phase space $T^*\R^{d}$ can be described in terms of the associated Wigner function $W^\hbar_{\psi}$ defined by
$$
\mathcal{C}_c^\infty(T^*\R^{d}) \ni a \longmapsto \big \langle \psi , \Op_\hbar(a) \psi \big \rangle_{L^2(\R^d)} =: W^\hbar_{\psi}(a)\in\C,
$$
where $\Op_\hbar(a)$ denotes the semiclassical Weyl quantization of the symbol $a$ and $\langle \cdot, \cdot \rangle_{L^2(\R^d)}$ is the scalar product on $L^2(\R^d)$; background on semiclassical pseudo-differential operators can be found, for instance in \cite{Dim_Sjo99, Zwobook}. The Wigner function $W^\hbar_{\psi}$ is a lift of the position density $|\psi|^2$ in the sense that $\int_{\R^d}W^\hbar_\psi(\cdot,\xi)d\xi=|\psi|^2$. 
		
If $(\Psi_\hbar)_{0<\hbar<1}$ is a sequence of solutions to \eqref{e:stationary_problem} then $(W_{\Psi_\hbar}^\hbar)$ is bounded in the space of tempered distributions $\cS'(T^*\R^d)$. Every accumulation point $\mu$ of this sequence is positive, and therefore defines a positive measure on $T^*\R^d$. In addition, one can prove that $\mu(T^*\R^d)=1$ and, using the calculus of semiclassical pseudodifferential operators, that $\mu$ is concentrated on the level set $H^{-1}(1)$ and is invariant by the classical flow $\phi^H_t$.

In other words, the set $\mathcal{M}(\widehat{P}_\hbar)$ of accumulation points of all possible sequences of Wigner functions of eigenfunctions \eqref{e:stationary_problem} is contained in:
\[
\mathcal{M}(H)=\{\mu \text{ probability measures on }T^*\R^d\,:\, \supp\mu\subseteq H^{-1}(1);\;(\phi^H_t)_*\mu=\mu,\;\forall t\in\R\}.
\]
The elements of $\mathcal{M}(\widehat{P}_\hbar)$ are called \textit{semiclassical measures}. 

The lift property also holds in the limit $\hbar\to 0^+$: If $\mu\in\mathcal{M}(\widehat{P}_\hbar)$ is obtained from a sequence $(\Psi_\hbar)$ satisfying \eqref{e:stationary_problem} and $|\Psi_\hbar|^2 dx\weaksc \nu$, as $\hbar\to 0^+$, in the space of positive measures on $\R^d$ then 
$$
\nu = \int_{\R^d} \mu(\cdot,d\xi).
$$
The measures $\nu$ are sometimes called \textit{quantum limits}.

Our main interest here is characterizing the set $\mathcal{M}(\widehat{P}_\hbar)$, and therefore the set of quantum limits for $\widehat{P}_\hbar$; we will see that the answer to this question depends strongly on the dynamical properties of the Hamiltonian flow generated by $H$, and how these dynamics are influenced by the perturbation. 

The set of semiclassical measures for the non-perturbed harmonic oscillator has been characterized in \cite{ArMa20}. In order to state the result, let
\begin{equation}
\label{e:submodule_start}
\Lambda_\omega := \{ k \in \mathbb{Z}^d \, : \, k \cdot \omega = 0 \}.
\end{equation}
This is a principal submodule of $\Z^d$; denote $d_\omega := d- \rk  \Lambda_\omega$. The quantity $d_\omega$ is crucial in order to understand the dynamics of $\phi^H_t$: an orbit of $\phi^H_t$ issued from $z_0$ is dense in a torus $\cT_\omega(z_0)$ of dimension at most $d_\omega$. In particular, all orbits are periodic when $d_\omega=1$, while most orbits are dense in $d$-dimensional tori when $d_\omega=d$. Proofs of these facts, together with more detailed description of the dynamics of classical harmonic oscillators are presented Section \ref{s:classical}.

One has $\omega \in \Lambda_\omega^\bot$, where $\Lambda_\omega^\perp$ is the orthogonal complement of $\Lambda_\omega$ in $\R^d$. Let $\nu_n\in\Z^d$, $n=1,\hdots,d_\omega$, be a basis of $\Lambda_\omega^\bot$. One can write $\omega=\sum_{n=1}^{d_\omega}v_n\nu_n$ for some real numbers $v_1,\hdots,v_{d_\omega}$. We define, for every $n=1,\hdots,d_\omega$,
\begin{equation}\label{e:decper}
\cH_n(x,\xi):=\frac{1}{2}\sum_{j=1}^d\nu_{n,j} (\xi_j^2+x_j^2), \quad \nu_n = (\nu_{n,1}, \ldots, \nu_{n,d}),
\end{equation}
and note that each flow $\phi^{\cH_n}_t$ is periodic, since $d_{\nu_n}=1$. In particular, if $d_\omega = 1$, then $\cH_1=H$. Write also 
\begin{equation}
    \cH := (\cH_1,\hdots,\cH_{d_\omega}),\quad v:=(v_1,\hdots,v_{d_\omega});
\end{equation}
so that $H=\cH\cdot v$. Finally, put $\Sigma_\cH:=\cH(H^{-1}(1))$

The main result of  \cite{ArMa20} is that a measure $\mu$ belongs to $\cM(\widehat{H}_\hbar)$ (\textit{i.e.} $\mu$ is a semiclassical measure of a sequence of eigenfunctions of the non-perturbed Hamiltonian)  if and only if there exists  $\cE\in\Sigma_\cH$ such that, 
\begin{equation}\label{e:smcho}
\mu \in \cM_{\cE}(\widehat{H}_\hbar):=\{\rho\in  \cM(H)\,:\,\supp\rho\subseteq \cH^{-1}(\cE)\}. 
\end{equation}
When $d_\omega=1$ (periodic case) this shows that $\mathcal{M}(\widehat{H}_\hbar)=\mathcal{M}(H)$; it also shows that $\mathcal{M}(\widehat{H}_\hbar)\neq\mathcal{M}(H)$ as soon as $d_\omega>1$. Notice that the partition of $T^*\R^{d}$ defined by the level sets $\cH^{-1}(\cE)$ does not depend on the particular choice of the basis $\{ \nu_n \, : \, n = 1, \ldots , d_\omega \}$, although the decomposition of $H$ in periodic Hamiltonians $\mathcal{H}_n$ does.

Quantum limits and semiclassical measures can be defined for sequences of eigenfunctions of general elliptic semiclassical operators on a compact Riemannian manifold. In this more general setting, they have been completely characterized in relatively few cases: spheres \cite{Jak_Zel99}, and more generally compact rank-one symmetric spaces \cite{MaciaZoll} or space forms \cite{AM:10}; and the two dimensional torus $\mathbb{T}^2=\R^2/\mathbb{Z}^2$ \cite{Jak97}. These are all examples of completely integrable Hamiltonian systems. Asymptotically vanishing perturbations of those systems, as those defined by \eqref{e:perturbed_quantum_harmonic_oscillator}, have been studied in \cite{Mac09,MaciaRiviere16,Mac_Riv19}, in the case of the sphere and Zoll manifolds, and in \cite{An_Fer_Mac15,  An_Mac14, Bour13,BurqZworski11, MaciaTorus, Mac_Riv18, TothIntegrable, Wunsch2012} when dealing with tori and more general completely integrable systems. 

Not much is known in the case of small, but non-asymptotically vanishing perturbations (KAM systems); however, there have some interesting recent developments in that direction \cite{Ar18,Gomes18,Gom_Hass18}.

In order to understand the structure of the elements in $\mathcal{M}(\widehat{P}_\hbar)$ it is useful to relax the problem, and consider instead semiclassical measures associated to quasimodes. These are approximate solutions to \eqref{e:stationary_problem}; the precise definition is the following.  
\begin{definition}
Let $(\psi_\hbar)$ be a sequence in $L^2(\R^d)$ with $\Vert \psi_\hbar \Vert_{L^2(\R^d)} = 1$. Let $(r_\hbar)$ be a sequence of positive real numbers such that $r_\hbar \to 0$ as $\hbar \to 0$. We say that $(\psi_\hbar)$ is a quasimode for $\widehat{P}_\hbar$ of width $(r_\hbar)$ if, for $0 < \hbar \leq \hbar_0$,
\begin{equation}
\label{e:quasimode_equation}
\widehat{P}_\hbar \, \psi_\hbar = \lambda_\hbar \, \psi_\hbar + R_\hbar,\quad \lambda_\hbar \to 1,\quad \| R_\hbar \|_{L^2(\R^d)}\leq r_\hbar.
\end{equation}
\end{definition} 
Analogously to the case of eigenfunctions, we can consider the set of semiclassical measures associated to quasimodes of width $(r_h)$. We will denote it by $\mathcal{Q}(\widehat{P}_\hbar, r_\hbar)$; one can prove that as soon as $r_\hbar = o(\hbar)$, the same localization and invariance properties as in the eigenstate case hold, and, since eigenstates are clearly quasimodes of any width:
\[
\mathcal{M}(\widehat{P}_\hbar)\subseteq\mathcal{Q}(\widehat{P}_\hbar, r_\hbar)\subseteq \mathcal{M}(H).
\]
The analysis of quasimodes for perturbations of completely integrable systems has a rich literature. The reader can consult,  among many other works, \cite{BabichBuldyrev,CdV:77,LazutkinBook,Pop00I,Pop00II,RalstonQuasimodes, VasyWunsch09, VasyWunsch11, Wunsch2012,ZelditchGB}, and \cite{AnL, Ar20, Ar_Riv18, Hitrik02, HitrikSjostrand04, HitrikSjostrand05, HitrikSjostrand08,HitrikSjostrand08b, HitrikSjostrand12,HitrikSjostrand18,HitrikSjostrandVuNgoc07} for self-adjoint and non self-adjoint perturbations, respectively.

\subsection{Results}
We next state our results concerning the concentration of quasimodes for the perturbed operator $\widehat{P}_\hbar$. In what follows, we will assume that the perturbation is a bounded, self-adjoint semiclassical pseudo-differential operator:
\begin{equation}
\widehat{V}_\hbar = \Op_\hbar(V),\quad V\in S^0(T^*\IR^d),
\end{equation}
where $S^0(T^*\IR^d)$ stands for the set of smooth, real-valued functions on $\IR^d$ that are bounded together with their derivatives of all orders.
 
As we will see below, a major role in our study is played by the average of the symbol $V$ along the orbits of the flow $\phi_t^H$. We define, for any $a \in \mathcal{C}^\infty(T^*\IR^d)$, 
\begin{equation}
\label{average}
\langle a \rangle (z) := \lim_{T \rightarrow \infty} \frac{1}{T} \int_0^T a \circ \phi_t^H (z) dt.
\end{equation}
This limit is well defined and the convergence takes place in the $\mathcal{C}^\infty(T^*\IR^d)$ topology, see Section \ref{s:classical_averages}.  
Our first result states that quasimodes of width $o(\varepsilon_\hbar \hbar)$ produce semiclassical measures that are invariant by two commuting  flows: the flow of the harmonic oscillator $\phi_t^H$ and the Hamiltonian flow $\phi_s^{\langle V \rangle}$ generated by the average $\langle V \rangle $ . 
\begin{teor}
\label{t:invariance_for_quasimodes}
Let $(\psi_\hbar)$ be a quasimode for $\widehat{P}_\hbar$ of width $o(\varepsilon_\hbar \hbar)$. Then any semiclassical measure $\mu$ associated to the sequence $(\psi_\hbar)$ satisfies:
$$
\big( \phi_s^{\langle V \rangle} \circ \phi_t^H \big)_* \mu = \mu ,\quad \forall s,t \in \R.
$$
In other words, if $(r_\hbar)=o(\vareps_\hbar\hbar)$ then
\[
\mathcal{Q}(\widehat{P}_\hbar, r_\hbar)\subseteq \mathcal{M}(H,\la V \ra):=\{\mu\in\cM(H)\,:\, (\phi_s^{\langle V \rangle} )_*\mu=\mu,\, \forall s \in \R\}.
\]
\end{teor}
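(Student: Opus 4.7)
The plan is to prove the two invariances in stages, leveraging the quadratic nature of $H$ to reduce the $\langle V\rangle$-invariance to testing against $\phi^H_t$-invariant symbols. I would first establish $\phi^H_t$-invariance of $\mu$ by the standard commutator argument: for any $a\in\mathcal{C}_c^\infty(T^*\R^d)$, self-adjointness of $\widehat{P}_\hbar$ together with \eqref{e:quasimode_equation} give $\la\psi_\hbar, [\widehat{P}_\hbar,\Op_\hbar(a)]\psi_\hbar\ra = O(r_\hbar)$; expanding the commutator via the semiclassical symbol calculus and dividing by $\hbar/i$ yields
$$
W^\hbar_{\psi_\hbar}(\{H,a\}) + \varepsilon_\hbar W^\hbar_{\psi_\hbar}(\{V,a\}) + O(\hbar^2) = O(r_\hbar/\hbar).
$$
Since $r_\hbar = o(\varepsilon_\hbar\hbar) = o(\hbar)$ and $\varepsilon_\hbar\to 0$, passing to the limit gives $\mu(\{H,a\}) = 0$, hence $(\phi^H_t)_*\mu = \mu$.

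Next I would reduce the $\phi_s^{\langle V\rangle}$-invariance to testing against $\phi^H_t$-invariant symbols. Because $\langle V\rangle$ is $\phi^H_t$-invariant by construction, one has $\{\langle V\rangle, a\}\circ\phi^H_t = \{\langle V\rangle, a\circ\phi^H_t\}$, so the $\phi^H_t$-invariance of $\mu$ and the standard identity $\mu(f) = \mu(\langle f\rangle)$ yield $\mu(\{\langle V\rangle, a\}) = \mu(\{\langle V\rangle, \langle a\rangle\})$; here $\langle a\rangle\in\mathcal{C}_c^\infty(T^*\R^d)$, since the orbits of $\phi^H_t$ are compact tori. It thus suffices to show $\mu(\{\langle V\rangle, b\}) = 0$ for every $b\in\mathcal{C}_c^\infty(T^*\R^d)$ with $\{H,b\} = 0$. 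The crucial observation is that, $H$ being a polynomial of degree two, the Moyal bracket with $H$ collapses to the Poisson bracket, so
$$
[\widehat{H}_\hbar, \Op_\hbar(b)] = \tfrac{\hbar}{i}\Op_\hbar(\{H,b\}) = 0,
$$
and therefore $[\widehat{P}_\hbar, \Op_\hbar(b)] = \varepsilon_\hbar[\widehat{V}_\hbar, \Op_\hbar(b)]$. Pairing with $\psi_\hbar$ gives $O(r_\hbar) = o(\varepsilon_\hbar\hbar)$ on the left, while the right-hand side equals $\tfrac{\varepsilon_\hbar\hbar}{i}W^\hbar_{\psi_\hbar}(\{V,b\}) + O(\varepsilon_\hbar\hbar^3)$; dividing by $\varepsilon_\hbar\hbar/i$ and passing to the limit yields $\mu(\{V,b\}) = 0$. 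Using the identity $\langle\{V,b\}\rangle = \{\langle V\rangle, b\}$ valid for $\phi^H_t$-invariant $b$ (the Poisson bracket commuting both with the symplectic flow $\phi^H_t$ and with the $\mathcal{C}^\infty$-limit defining the average) together with the $\phi^H_t$-invariance of $\mu$, one concludes $\mu(\{\langle V\rangle, b\}) = 0$, completing the argument by the reduction above.

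The hardest point, and the place where the harmonic oscillator structure is decisive, is the exact vanishing $[\widehat{H}_\hbar, \Op_\hbar(b)] = 0$ for $\phi^H_t$-invariant $b$: it is what allows the $\widehat{H}_\hbar$-contribution to the commutator to drop out at \emph{all} orders in $\hbar$, saving a factor of $\varepsilon_\hbar$ and permitting $\langle V\rangle$ to be resolved at the critical quasimode width $o(\varepsilon_\hbar\hbar)$. Were $H$ not quadratic, subprincipal Moyal corrections would persist and the commutator argument would have to be coupled with a Birkhoff-type normal form, in the spirit of the tools the abstract announces for the rest of the paper.
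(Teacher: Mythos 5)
Your proposal is correct and follows essentially the same route as the paper's proof: the paper tests directly against the quantum average $\widehat{A}_\hbar=\langle\Op_\hbar(a)\rangle=\Op_\hbar(\langle a\rangle)$, which amounts to your reduction to $\phi^H_t$-invariant test symbols $b=\langle a\rangle$. The decisive step you identify — the exact vanishing $[\widehat{H}_\hbar,\Op_\hbar(b)]=0$ for $\{H,b\}=0$ thanks to $H$ being quadratic, which is what permits division by $\varepsilon_\hbar\hbar$ — is precisely the one the paper uses, followed by the same averaging identity $\mu(\{V,\langle a\rangle\})=\mu(\{\langle V\rangle,\langle a\rangle\})$.
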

 Notice that this result is independent of the size of $\vareps_h$, provided that $\varepsilon_\hbar \to $ as $\hbar \to 0^+$. Note also that this result is empty as soon as, in a neighborhood of $H^{-1}(1)$, one has $\langle V\rangle =f\circ \cH$ for some $f\in S^0(\R^{d_\omega})$, in which case the Hamiltonian vector field $X_{\la V \ra}$ evaluated at any $z\in H^{-1}(1)$ gives a vector tangent to the invariant torus $\mathcal{T}_\omega(z)$. As we will see later, this is always the case in the completely non-resonant case $d_\omega = d$. Let $k \geq 0$, and define $\cO_{H^{-1}(1)}(k)$ as the set of those $a \in \mathcal{C}^\infty(\R^{2d})$ such that $\vert a(z) \vert \leq C \dist(z, H^{-1}(1))^k$ for $z$ in a neighborhood of $H^{-1}(1)$. It turns out that:
$$
X_{\langle V \rangle} \vert_{\mathcal{T}_\omega(z)} \in T\mathcal{T}_\omega(z), \quad \forall z \in H^{-1}(1) \iff \exists f\in \cC^\infty(\R^{d_\omega}),\quad \langle V \rangle - f\circ \cH \in\cO_{H^{-1}(1)}(1).
$$
Otherwise, Theorem \ref{t:invariance_for_quasimodes} provides an actual restriction on the set of semiclassical measures of quasimodes.

\begin{corol}\label{c:qmdiffeigenf}
Suppose that $V \in S^0(T^*\R^d)$ is such that its average satisfies $\langle V \rangle - f\circ\cH \not\in \cO_{H^{-1}(1)}(1)$ for every $f\in \cC^\infty(\R^{d_\omega})$, and that $(r_\hbar)=o(\vareps_\hbar\hbar)$. Then
\[
\mathcal{Q}(\widehat{P}_\hbar, r_\hbar)\neq \mathcal{M}(\widehat{H}_\hbar),\quad \mathcal{M}(\widehat{P}_\hbar)\neq \mathcal{M}(\widehat{H}_\hbar).
\]
\end{corol}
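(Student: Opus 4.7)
The plan is to exhibit a measure $\mu\in\mathcal{M}(\widehat{H}_\hbar)$ that fails to be invariant under $\phi_s^{\la V\ra}$. Granted such a $\mu$, Theorem \ref{t:invariance_for_quasimodes} forces $\mu\notin\mathcal{Q}(\widehat{P}_\hbar,r_\hbar)$; and since every eigenfunction of $\widehat{P}_\hbar$ is trivially a quasimode of any width (take $R_\hbar\equiv 0$), we have $\mathcal{M}(\widehat{P}_\hbar)\subseteq\mathcal{Q}(\widehat{P}_\hbar,r_\hbar)$, so $\mu\notin\mathcal{M}(\widehat{P}_\hbar)$ as well, and both announced inequalities follow at once.

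To construct such a $\mu$, I would invoke the equivalence recalled immediately above the corollary: the hypothesis $\la V\ra-f\circ\cH\notin\cO_{H^{-1}(1)}(1)$ for every $f\in\cC^\infty(\R^{d_\omega})$ yields a point $z_0\in H^{-1}(1)$ at which $X_{\la V\ra}(z_0)\notin T_{z_0}\cT_\omega(z_0)$. I then take $\mu:=\mu_{z_0}$ to be the $\phi_t^H$-invariant Haar probability measure supported on the orbit closure $\cT_\omega(z_0)$, i.e.\ the weak-$*$ limit as $T\to\infty$ of $\tfrac{1}{T}\int_0^T (\phi_t^H)_*\delta_{z_0}\,dt$. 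Since $\supp\mu_{z_0}\subseteq H^{-1}(1)\cap\cH^{-1}(\cE)$ with $\cE:=\cH(z_0)\in\Sigma_\cH$, the characterization \eqref{e:smcho} from \cite{ArMa20} places $\mu_{z_0}$ inside $\mathcal{M}_\cE(\widehat{H}_\hbar)\subseteq\mathcal{M}(\widehat{H}_\hbar)$.

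It remains to verify that $(\phi_s^{\la V\ra})_*\mu_{z_0}\neq\mu_{z_0}$ for some small $s\neq 0$. Because $\la V\ra$ is a $\phi_t^H$-average one has $\{H,\la V\ra\}=0$, so the flows $\phi_t^H$ and $\phi_s^{\la V\ra}$ commute and $\phi_s^{\la V\ra}$ sends $\phi_t^H$-orbit closures to $\phi_t^H$-orbit closures:
\[
\phi_s^{\la V\ra}\bigl(\cT_\omega(z_0)\bigr)=\cT_\omega\bigl(\phi_s^{\la V\ra}(z_0)\bigr).
\]
The transversality of $X_{\la V\ra}(z_0)$ to $\cT_\omega(z_0)$ forces $\phi_s^{\la V\ra}(z_0)\notin\cT_\omega(z_0)$ for all sufficiently small $s\neq 0$; combined with the minimality of the linear flow $\phi_t^H$ on each torus $\cT_\omega(\cdot)$---so that distinct orbit closures are disjoint---this yields $\supp(\phi_s^{\la V\ra})_*\mu_{z_0}\cap\supp\mu_{z_0}=\emptyset$, ruling out the equality of the two measures.

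Most of the substance has already been packed into Theorem \ref{t:invariance_for_quasimodes} and into the transversality equivalence quoted just above the corollary, so the remainder of the argument is essentially soft topological dynamics. The one point I would verify carefully is the disjointness of distinct tori $\cT_\omega(z_1),\cT_\omega(z_2)$ inside $H^{-1}(1)$, which is a direct consequence of the fact that each $\cT_\omega(z)$ is an orbit closure of a minimal translation action on a torus and hence two such closures either coincide or are disjoint.
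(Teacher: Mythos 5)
Your argument is correct and follows the route the paper clearly intends (the corollary is stated without a separate proof, precisely because it drops out of Theorem \ref{t:invariance_for_quasimodes}, the transversality equivalence displayed just before it, and the characterization \eqref{e:smcho} from \cite{ArMa20}). You correctly extract a point $z_0 \in H^{-1}(1)$ with $X_{\langle V\rangle}(z_0)\notin T_{z_0}\cT_\omega(z_0)$, place the Haar measure $\mu_{z_0}$ on $\cT_\omega(z_0)$ inside $\mathcal{M}(\widehat{H}_\hbar)$, and use transversality together with commutativity of the flows and the disjoint-or-equal dichotomy for orbit closures of the minimal translation flow to rule out $\phi_s^{\langle V\rangle}$-invariance for small $s\neq 0$; then Theorem \ref{t:invariance_for_quasimodes} and the chain $\mathcal{M}(\widehat{P}_\hbar)\subseteq\mathcal{Q}(\widehat{P}_\hbar,r_\hbar)$ give both inequalities at once. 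All steps are sound, including the two points you flagged for scrutiny: a curve whose initial velocity is not tangent to an embedded submanifold indeed leaves it for small nonzero time, and translates of a fixed subtorus by points of that subtorus coincide, so distinct $\cT_\omega$-tori are disjoint.
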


In this work, we aim at precising Theorem \ref{t:invariance_for_quasimodes} and Corollary \ref{c:qmdiffeigenf}. 
When $d=2$ and $\phi^H_t$ is periodic, we actually obtain a complete characterization of semiclassical measures of $o(\hbar^2)$ quasimodes. 
\begin{teor}\label{t:main_2d}
Suppose $d=2$,  $d_\omega = 1$ and that $\varepsilon_\hbar = o(\hbar)$. Then there exists a sequence $(r_\hbar)$ of positive real numbers  satisfying $r_\hbar = o(\varepsilon_\hbar \hbar)$ such that $\mu \in \mathcal{Q}(\widehat{P}_\hbar, r_\hbar)$ if and only if there exists $E \in \R$ such that
\begin{equation}\label{e:biloc}
\supp \mu \subset H^{-1}(1) \cap \langle V \rangle^{-1}(E),
\end{equation}
and
$$
\big( \phi_t^H\circ \phi_s^{\langle V \rangle} \big)_* \mu = \mu, \quad \forall t,s \in \R.
$$
\end{teor}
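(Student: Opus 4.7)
This biconditional splits into a necessary direction --- every semiclassical measure of an $o(\varepsilon_\hbar \hbar)$ quasimode satisfies \eqref{e:biloc} and is doubly invariant --- and a sufficient direction, producing such a quasimode for each such $\mu$. The double invariance under the commuting flows $\phi^H_t$ and $\phi^{\langle V \rangle}_s$ (commuting because $\{H, \langle V \rangle\} = 0$) is already given by Theorem \ref{t:invariance_for_quasimodes}, so the substantive content is the extra support condition \eqref{e:biloc} and the construction of the quasimodes.

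For the necessary direction, the plan is to apply the quantum Birkhoff normal form advertised in the abstract to produce a unitary $U_\hbar$ and a self-adjoint remainder $\widehat{R}_\hbar$ of norm $o(\varepsilon_\hbar \hbar)$ such that
$$
U_\hbar^* \widehat{P}_\hbar U_\hbar = \widehat{H}_\hbar + \varepsilon_\hbar \Op_\hbar(\langle V \rangle) + \widehat{R}_\hbar.
$$
A single averaging step already gives a remainder of order $\varepsilon_\hbar^2 = o(\varepsilon_\hbar \hbar)$ under the assumption $\varepsilon_\hbar = o(\hbar)$. The transformed function $\tilde\psi_\hbar := U_\hbar^* \psi_\hbar$ is a quasimode of the same width for the normalized operator, with the same limiting Wigner measure. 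Since $\|\widehat{V}_\hbar\|$ is uniformly bounded, one extracts a subsequence along which $(\lambda_\hbar - 1)/\varepsilon_\hbar \to E$ and then tests the normalized quasimode equation against $\Op_\hbar(a)$ for $a = \chi(\langle V \rangle) \cdot b$, where $b$ localizes near $H^{-1}(1)$ and $\chi \in \mathcal{C}_c^\infty(\IR)$ vanishes at $E$. Using the pseudodifferential calculus to control the cross term coming from $\widehat{H}_\hbar - 1$ (small on the support of $b$), one deduces that the limiting measure annihilates such $a$, forcing $\supp \mu \subset \langle V \rangle^{-1}(E)$.

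For the sufficient direction, the plan is to use the coherent-state construction also advertised in the abstract. In the $d=2$, $d_\omega = 1$ setting, the reduced phase space $H^{-1}(1)/\phi^H_t$ is the Bloch sphere $\IS^2$ on which $\langle V \rangle$ descends to a smooth function $\widetilde{\langle V \rangle}$. An invariant $\mu$ supported in $\langle V \rangle^{-1}(E) \cap H^{-1}(1)$ descends to a measure on $\IS^2$ supported in $\{\widetilde{\langle V \rangle} = E\}$ and invariant by the reduced flow; by ergodic decomposition it suffices to produce a quasimode for each extreme point. For regular $E$ these are uniform measures on periodic orbits of the reduced flow, lifting to invariant 2-tori $\mathcal{T} \subset H^{-1}(1) \cap \langle V \rangle^{-1}(E)$ on which both flows act periodically; extreme points at critical values of $\widetilde{\langle V \rangle}$ lift to single periodic orbits. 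In the regular case, the natural ansatz is
$$
\psi_\hbar := U_\hbar \cdot \frac{1}{TT'} \int_0^T \int_0^{T'} e^{-it \widehat{H}_\hbar / \hbar}\, e^{-is \varepsilon_\hbar \Op_\hbar(\langle V \rangle) / \hbar}\, \varphi^\hbar_{z_0} \, ds \, dt,
$$
where $\varphi^\hbar_{z_0}$ is a coherent state at $z_0 \in \mathcal{T}$ and $T, T'$ are the two periods. Because $\{H, \langle V \rangle\} = 0$, the two propagators quasi-commute, so $\psi_\hbar$ is essentially an approximate joint eigenfunction of $\widehat{H}_\hbar$ (eigenvalue $1$) and $\Op_\hbar(\langle V \rangle)$ (eigenvalue $E$), hence an approximate eigenfunction of the normalized operator for $1 + \varepsilon_\hbar E$; conjugating back by $U_\hbar$ recovers a quasimode for $\widehat{P}_\hbar$.

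The principal obstacle is achieving the width $o(\varepsilon_\hbar \hbar)$ rather than merely $O(\varepsilon_\hbar \hbar)$. Coherent-state propagation under $\phi^{\langle V \rangle}_s$ for time $s \sim 1$ incurs Egorov-type remainders of size $\hbar$, which become of size $\varepsilon_\hbar \hbar$ after the $\varepsilon_\hbar / \hbar$ rescaling of the propagator --- exactly the critical threshold. Beating it will require pushing the quantum Birkhoff normal form to sufficiently high order so that $\|\widehat{R}_\hbar\| = o(\varepsilon_\hbar \hbar)$ strictly (exploiting $\varepsilon_\hbar = o(\hbar)$), and likely replacing the Gaussian coherent state by a Lagrangian state tailored to $\mathcal{T}$ whose Wigner transform concentrates sharply enough on the desired orbit to absorb the leading residuals from both averagings.
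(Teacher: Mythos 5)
Your high-level strategy (one Birkhoff-normal-form step plus a coherent-state construction) is the right template, but both directions have concrete gaps compared to what the paper actually does. The paper deduces this theorem from Theorem~\ref{t:improved_quasimodes} (with $\gamma(\omega)=0$, so the condition $\varepsilon_\hbar = o(\hbar^{1+\gamma(\omega)})$ becomes $\varepsilon_\hbar = o(\hbar)$) together with Remark~\ref{r:2dinteg}, which guarantees $\mathcal{I}(H,\langle V\rangle)=H^{-1}(1)$ in dimension two.

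For the necessary direction, your proposed test-function argument --- pairing the normalized quasimode equation against $\Op_\hbar(\chi(\langle V\rangle)\cdot b)$ and ``controlling the cross term from $\widehat{H}_\hbar - 1$'' --- does not deliver \eqref{e:biloc}. After normalizing, $(\widehat{H}_\hbar-\lambda_\hbar)\tilde\psi_\hbar$ is only $O(\varepsilon_\hbar)$, and once you divide the equation by $\varepsilon_\hbar$ this term is $O(1)$ and genuinely contaminates the inner products; choosing $b$ supported near $H^{-1}(1)$ does not help because $\widehat{H}_\hbar-\lambda_\hbar$ is not a small \emph{operator} there. The mechanism that actually works is spectral: since $d_\omega = 1$ the eigenvalues of $\widehat{H}_\hbar$ near $1$ are separated by $\geq C\hbar$, while $\varepsilon_\hbar = o(\hbar)$, so one can project $\tilde\psi_\hbar$ onto a \emph{single} eigenspace of $\widehat{H}_\hbar$ with $o(1)$ loss, and on that subspace $\widehat{H}_\hbar - \Lambda_\hbar$ vanishes exactly. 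One then divides by $\varepsilon_\hbar$ and obtains that the projected quasimode is an $o(\hbar)$-quasimode for $\Op_\hbar(\langle V\rangle)$, forcing $\supp\mu\subset\langle V\rangle^{-1}(E)$. This is Lemma~\ref{l:projection_lemma}, and without the eigenvalue-spacing input your argument cannot close.

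For the sufficient direction, the ansatz
\begin{equation*}
\frac{1}{TT'}\int_0^T\!\!\int_0^{T'} e^{-it\widehat{H}_\hbar/\hbar}\,e^{-is\,\varepsilon_\hbar\Op_\hbar(\langle V\rangle)/\hbar}\,\varphi^\hbar_{z_0}\,ds\,dt
\end{equation*}
has a fatal extra factor of $\varepsilon_\hbar$ in the second exponential. For fixed $s\in[0,T']$ this propagator moves the coherent state by time $s\varepsilon_\hbar\to 0$ along the $\langle V\rangle$-flow, so the resulting state concentrates on the original $\phi^H_t$-orbit of $z_0$ and not on the $2$-torus $\mathcal{T}\subset H^{-1}(1)\cap\langle V\rangle^{-1}(E)$. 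Rescaling $T'\sim 1/\varepsilon_\hbar$ does not fix it, since that exceeds the Ehrenfest time for small $\varepsilon_\hbar$. The correct construction (Lemma~\ref{concentration_minimal_set_2}) propagates by $e^{-is\Op_\hbar(\langle V\rangle)/\hbar}$ over a fixed window, inserts the compensating phase $e^{isE/\hbar}$ with $E=\langle V\rangle(z_0)$, and cuts off with a smooth $\chi_T$; the width is then $O(\varepsilon_\hbar\hbar/T)$, and letting $T\to\infty$ slowly below the Ehrenfest time delivers $o(\varepsilon_\hbar\hbar)$. Pushing the normal form to high order (as you suggest) is also used, but the sharpening from $O$ to $o$ comes from the growing $T$, not from Lagrangian states.
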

Note, that the restriction on the size of the perturbation in Theorem \ref{t:main_2d} is not necessary in order to obtain the existence of quasimodes whose semiclassical measures satisfy the conclusion, it is only required in order to ensure that all semiclassical measures satisfy the localization property \eqref{e:biloc}. 

Our results are however more general, and address the following questions.
\begin{itemize}
\item[(i)] The threshold $o(\vareps_\hbar \hbar)$ is optimal for Theorem \ref{t:invariance_for_quasimodes}. It is possible to construct many quasimodes of width $O(\vareps_\hbar \hbar)$ for which the conclusion of Theorem \ref{t:invariance_for_quasimodes} fails. A precise description of those quasimodes is given in Theorem \ref{t:quasimodes_1} below.

\item[(ii)] Under suitable dynamical hypotheses on $\phi_s^{\langle V \rangle}$, we prove a converse for Theorem \ref{t:invariance_for_quasimodes}, namely the existence of quasimodes of width $o(\vareps_\hbar \hbar)$ whose semiclassical measures $\mu$ are some prescribed elements of $\mathcal{M}(H,\la V \ra)$. This is stated precisely in Theorem \ref{t:improved_quasimodes} below. 

\item[(iii)] It is possible to give an analogous result to Theorems \ref{t:invariance_for_quasimodes} and \ref{t:improved_quasimodes} when the average of $V$ satisfies $\langle V \rangle = f\circ\cH$. The role of $\langle V \rangle$ is then played by the first non-trivial term $\langle L \rangle$ of the quantum Birkhoff Normal form of $\widehat{P}_\hbar$ (see Appendix \ref{averaging_method}). In this case, the width of the relevant quasimodes is smaller; precise statements are a bit more involved, and we present such a statement in Theorem \ref{t:general_invariance}, Section \ref{s:general}.

\item[(iv)] When $d_\omega = d$, given any $L \in S^0(\R^{2d})$ and $z_0 \in H^{-1}(1)$, the Hamiltonian vector field $X_{\langle L \rangle}(z_0)$ is tangent to the invariant torus issued from the point $z_0$ by the flow $\phi_t^H$. In this case, the set of  semiclassical measures of sequences of quasimodes of width $\varepsilon_\hbar^N$, for arbitrary but finite $N \geq 1$, contains all Haar measures of tori $\mathcal{T} \subset H^{-1}(1)$ that are invariant  by $\phi_t^H$. This case is addressed in Theorem \ref{t:quasimodes_2}; in addition, we show that when the size of the perturbation $\varepsilon_\hbar$ is smaller than the separation between eigenvalues of $\widehat{H}_\hbar$, then semiclassical measures of quasimodes for $\widehat{P}_\hbar$ of width $o(\varepsilon_\hbar \hbar)$ coincide with semiclassical measures of eigenstates of the unperturbed Hamiltonian.

\end{itemize}

Most of our results rely on the construction of a quantum Birkhoff normal form that can be performed only if the frequency vector $\omega$ satisfies a \textit{Diophantine} property. 
\begin{definition}
A vector $\omega \in \R^d_+$ is called partially Diophantine if there exist constants $\varsigma > 0$ and $\gamma \geq 0$ such that
\begin{equation}
\label{e:partially_diophantine}
\vert \omega \cdot k \vert \geq \frac{\varsigma}{\vert k \vert^{\gamma}}, \quad \forall k \in \mathbb{Z}^d \setminus \Lambda_\omega,
\end{equation}
We denote by $\gamma(\omega)$ the infimum of all constants $\gamma \geq  0$ satisfying \eqref{e:partially_diophantine}.
\end{definition}

\begin{remark}
A partially Diophantine vector $\omega$ can be resonant. When $d_\omega = 1$ (this corresponds to a periodic harmonic oscillator), one has that $\omega$ is always partially Diophantine, since it is of the form $\alpha k_0$, with $\alpha > 0$ and $k_0 \in \mathbb{N}^d$. Indeed,  
$$
\vert \omega \cdot k \vert = \alpha \vert k_0 \cdot k \vert \geq \alpha > 0, \quad \forall k \in \mathbb{Z}^d \setminus \Lambda_\omega.
$$
\begin{itemize}
    \item If $d_\omega = d$ (\textit{i.e.} $\omega$ is non-resonant), then \eqref{e:partially_diophantine} means simply that $\omega$ is Diophantine in the usual way.  It is well known that the set of Diophantine vectors has full Lebesgue measure (see for instance \cite{Russ74}). Hence the set of partially Diophantine vectors has also full Lebesgue measure, since it contains the set of Diophantine vectors.

    \item When $d_\omega=1$ then $\gamma(\omega) = 0$. Otherwise, if $d_\omega > 1$ then $\gamma(\omega) \geq d_\omega-1$ (see again \cite{Russ74}).
\end{itemize}
\end{remark}

In the case $d_\omega = d$, and similarly to what happens in KAM theory, this Diophantine exponent is related to the size $\eps_\hbar$ of the perturbation for which the semiclassical measures in $\mathcal{Q}(\widehat{P}_\hbar, r_\hbar)$ for $r_\hbar = o(\eps_\hbar\hbar)$ actually coincide with those of the unperturbed Hamiltonian $\widehat{H}_\hbar$.  
\begin{teor}
\label{t:quasimodes_2}
Suppose that $\omega$ is Diophantine and that $\varepsilon_\hbar = \hbar^\alpha$ for some $\alpha > 0$. Then for every $N > 1 + \alpha$, there exists a width $r_\hbar = O(\hbar^N)$ such that
$$
\mathcal{M}(\widehat{H}_\hbar)\subseteq \mathcal{Q}(\widehat{P}_\hbar, r_\hbar).
$$
If, in addition, $\alpha>1+\gamma(\omega)$ then the inclusion is an equality:
\[
\mathcal{Q}(\widehat{P}_\hbar, r_\hbar)= \mathcal{M}(\widehat{H}_\hbar).
\]
\end{teor}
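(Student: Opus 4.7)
The argument rests on the QBNF of Appendix~\ref{averaging_method}. In the fully non-resonant case $d_\omega=d$, unique ergodicity of $\phi^H_t$ on each torus forces every average $\langle\cdot\rangle$ to be a smooth function of the actions $I_j(x,\xi)=\tfrac12(\xi_j^2+x_j^2)$, and the Diophantine bound $|\omega\cdot k|\geq \varsigma|k|^{-\gamma(\omega)}$ makes every Birkhoff cohomological equation solvable in $S^0$ uniformly in~$\hbar$. Iterating the averaging procedure $N_0$ times produces a unitary $U_\hbar^{(N_0)}$ and a smooth function $G_\hbar^{(N_0)}$ such that
\[
U_\hbar^{(N_0)\ast}\widehat P_\hbar\, U_\hbar^{(N_0)} = G_\hbar^{(N_0)}(\widehat I_1,\ldots,\widehat I_d) + R_\hbar^{(N_0)},\qquad G_\hbar^{(N_0)}(I)=\omega\cdot I+\varepsilon_\hbar\langle V\rangle(I)+\cdots,
\]
with $\|R_\hbar^{(N_0)}\|_{L^2\to L^2}\leq C_{N_0}(\varepsilon_\hbar^{N_0+1}+\hbar^{N_0+1})$; choosing $N_0$ large enough makes the remainder $O(\hbar^N)$ for any prescribed $N$.

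\textbf{First inclusion.} By the characterization \eqref{e:smcho} together with the Diophantine ergodicity, every $\mu\in\mathcal M(\widehat H_\hbar)$ is the Haar measure on a single invariant torus $\{I=\mathcal E\}\subset H^{-1}(1)$. Pick multi-indices $n(\hbar)\in\N^d$ with $\hbar(n(\hbar)+\tfrac12)\to\mathcal E$; the Hermite function $h^\hbar_{n(\hbar)}$ is a joint eigenfunction of the $\widehat I_j$, hence an exact eigenfunction of $G_\hbar^{(N_0)}(\widehat I)$ with eigenvalue $\lambda_\hbar\to 1$, and its Wigner measure converges to $\mu$. Setting $\psi_\hbar:=U_\hbar^{(N_0)}h^\hbar_{n(\hbar)}$ produces a quasimode of $\widehat P_\hbar$ of width $O(\hbar^N)$. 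The Wigner measure of $\psi_\hbar$ is still $\mu$ in the semiclassical limit by Egorov's theorem: the classical symplectomorphism $\kappa_\hbar^{(N_0)}$ associated to $U_\hbar^{(N_0)}$ is $O(\varepsilon_\hbar/\hbar)$-close to the identity and preserves the foliation $\{I=\text{const}\}$ to the same order, so in the limit it pushes Haar measures on invariant tori to themselves (alternatively, one may use the coherent-state propagation along the commuting flows $\phi^{\widehat I_j}_t$ advertised in the introduction).

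\textbf{Reverse inclusion when $\alpha>1+\gamma(\omega)$.} Take a quasimode sequence $(\psi_\hbar)$ for $\widehat P_\hbar$ of width $r_\hbar=o(\varepsilon_\hbar\hbar)=o(\hbar^{1+\alpha})$ and define $\widetilde\psi_\hbar:=U_\hbar^{(N_0)\ast}\psi_\hbar$; it is a quasimode of $G_\hbar^{(N_0)}(\widehat I)$ of width $r_\hbar+\|R_\hbar^{(N_0)}\|=o(\hbar^{1+\alpha})$ for $N_0$ large enough. The operator $G_\hbar^{(N_0)}(\widehat I)$ has pure point spectrum $\{G_\hbar^{(N_0)}(\hbar(n+\tfrac12))\}$, and for $n\neq m$ in the relevant energy window
\[
\bigl|G_\hbar^{(N_0)}(\hbar(n+\tfrac12))-G_\hbar^{(N_0)}(\hbar(m+\tfrac12))\bigr|\geq c\hbar|\omega\cdot(n-m)|-C\varepsilon_\hbar\geq c'\hbar^{1+\gamma(\omega)},
\]
using $\varepsilon_\hbar=\hbar^\alpha=o(\hbar^{1+\gamma(\omega)})$. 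Since $\alpha>1+\gamma(\omega)$ also gives $o(\hbar^{1+\alpha})=o(\hbar^{1+\gamma(\omega)})$, a standard spectral-projection argument forces $\widetilde\psi_\hbar=h^\hbar_{n(\hbar)}+o_{L^2}(1)$ for some sequence $n(\hbar)$, whose Wigner measure lies in $\mathcal M(\widehat H_\hbar)$. Finally $\|U_\hbar^{(N_0)}-\mathrm{Id}\|_{L^2\to L^2}=O(\hbar^{\alpha-1})=o(1)$ (using $\alpha>1$), so the original $\psi_\hbar$ shares this limiting Wigner measure.

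\textbf{Main obstacle.} The heart of the matter is the interplay of the three small scales: the quasimode width $r_\hbar$, the perturbation $\varepsilon_\hbar=\hbar^\alpha$, and the minimal Diophantine spacing $\hbar^{1+\gamma(\omega)}$ of the unperturbed spectrum at energy one. The threshold $\alpha>1+\gamma(\omega)$ is sharp, being exactly the condition under which both the BNF remainder and the perturbation-induced eigenvalue jitter can be pushed strictly below this spacing, enabling the single-Hermite-function localization step. Beyond this, the technical effort lies in propagating cohomological-equation estimates uniformly through many Birkhoff iterations and verifying that the Wigner measure is preserved through the non-trivial (yet foliation-preserving to leading order) conjugation by $U_\hbar^{(N_0)}$.
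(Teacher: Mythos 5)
Your proposal follows the paper's proof almost verbatim: use the quantum Birkhoff normal form of Proposition~\ref{averaging-lemma} to conjugate $\widehat{P}_\hbar$ to a function of the action operators $\Op_\hbar(H_1),\ldots,\Op_\hbar(H_d)$, take a Hermite joint eigenfunction $\Psi_\hbar$ with the right quantum numbers, set $\psi_\hbar:=U_{N,\hbar}\Psi_\hbar$, and for the reverse inclusion invoke the spectral gap $\gtrsim\hbar^{1+\gamma(\omega)}$ of $\widehat{H}_\hbar$ (the paper packages this as Lemma~\ref{l:projection_lemma} and Proposition~\ref{p:localization}). The overall structure, the choice of QBNF order, the Diophantine ingredients, and the way $\alpha>1+\gamma(\omega)$ enters the spectral localization all match the paper's argument.

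One quantitative misstatement is worth flagging. You assert that the classical symplectomorphism underlying $U_{N,\hbar}$ is ``$O(\varepsilon_\hbar/\hbar)$-close to the identity,'' but this conflates the \emph{operator-norm} distance $\|U_{N,\hbar}-\mathrm{Id}\|_{\cL(L^2)}=O(\varepsilon_\hbar/\hbar)$ with the distance of the associated canonical transformation from the identity. Since $U_{N,\hbar}$ is built from $\exp(-i\varepsilon_\hbar^j\Op_\hbar(F_j)/\hbar)$, the corresponding classical flow is the time-$\varepsilon_\hbar^j$ flow of $F_j$, hence $O(\varepsilon_\hbar)$-close to the identity; this is precisely the content of \eqref{FIO-estimate}, and it is what guarantees that $\psi_\hbar$ and $\Psi_\hbar$ have the same semiclassical measure for \emph{every} $\alpha>0$, not just $\alpha>1$. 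As written, taking the $O(\varepsilon_\hbar/\hbar)$ bound at face value leaves a gap in the first inclusion when $\alpha\leq 1$; the fix is simply to invoke \eqref{FIO-estimate} directly, as the paper does. Your later use of $\|U_{N,\hbar}-\mathrm{Id}\|=o(1)$ for $\alpha>1$ in the reverse inclusion is harmless but unnecessary for the same reason — \eqref{FIO-estimate} already identifies the semiclassical measures of $\psi_\hbar$ and $U_{N,\hbar}^*\psi_\hbar$ in all regimes. Aside from this, the extra $\hbar^{N_0+1}$ you include in the remainder estimate is not part of the paper's QBNF (which gives $O(\varepsilon_\hbar^{N_0+1})$), but it does not affect the conclusion.
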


In all the results that follow we are going to make the following assumption:
\begin{equation}\label{e:ass}\tag{A}
\omega \text{ is partially Diophantine and }\varepsilon_\hbar = O(\hbar^\alpha) \text{ for some } \alpha > 0.
\end{equation}
We next state our first result about the construction of quasimodes of width $O(\varepsilon_\hbar \hbar)$ for $\widehat{P}_\hbar$.

\begin{teor}
\label{t:quasimodes_1}
Suppose \eqref{e:ass} holds. Let $\mu_0 \in \mathcal{M}(\widehat{H}_\hbar)$ with $\supp \mu_0 \subset \langle V \rangle^{-1}(E)$ for some $E \in \R$. Then, for every $T > 0$, and every $\chi_T \in \mathcal{C}_c^\infty(\R)$ supported on $(0,T)$, there exists a quasimode $(\psi_\hbar)$ for $\widehat{P}_\hbar$ of width $r_\hbar = O(\varepsilon_\hbar \hbar)$ with semiclassical measure given by
\begin{equation}
\label{e:evolved_measure}
\mu_T = \frac{1}{ \Vert \chi_T \Vert_{L^2(\R)}^2} \int_\R \chi_T(s)^2 \, \big( \phi_s^{\langle V \rangle} \big)_* \mu_0 \, ds.
\end{equation}
\end{teor}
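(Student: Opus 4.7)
The plan is to combine the quantum Birkhoff normal form with a smearing construction along the $\phi^{\langle V\rangle}$-flow. Under assumption \eqref{e:ass}, we iteratively apply the quantum Birkhoff normal form from Appendix \ref{averaging_method} $N$ times, with $N$ chosen so that $\varepsilon_\hbar^{N+1}=O(\varepsilon_\hbar\hbar)$ (possible since $\varepsilon_\hbar = O(\hbar^\alpha)$ with $\alpha>0$). This produces a unitary $U_\hbar$ and a self-adjoint $\widehat{B}_\hbar = \widehat{H}_\hbar + \varepsilon_\hbar\widehat{L}_\hbar$ with $[\widehat{B}_\hbar, \widehat{H}_\hbar]=0$, $\widehat{L}_\hbar$ of principal symbol $\langle V\rangle$, and
$$U_\hbar^*\widehat{P}_\hbar U_\hbar = \widehat{B}_\hbar + O_{L^2\to L^2}(\varepsilon_\hbar^{N+1}).$$
It therefore suffices to build a quasimode $\tilde\psi_\hbar$ of $\widehat{B}_\hbar$ of width $O(\varepsilon_\hbar\hbar)$ realizing the measure $\mu_T$ and set $\psi_\hbar := U_\hbar\tilde\psi_\hbar$; the $O(\varepsilon_\hbar)$ proximity of $U_\hbar$ to the identity at the principal symbol level ensures that $\tilde\psi_\hbar$ and $\psi_\hbar$ share the same semiclassical measure in the limit.

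Let $(\Phi_\hbar)$ be a sequence of $L^2$-normalized $\widehat{H}_\hbar$-eigenfunctions with eigenvalue $\lambda_\hbar^H\to 1$ and semiclassical measure $\mu_0$, and define
$$\tilde\psi_\hbar := \frac{1}{\sqrt{Q_\hbar}}\int_\R \chi_T(s)\, e^{-is(\widehat{L}_\hbar-E)/\hbar}\Phi_\hbar\,ds,$$
with $Q_\hbar>0$ chosen so that $\|\tilde\psi_\hbar\|_{L^2}=1$. Since $[\widehat{H}_\hbar, \widehat{L}_\hbar]=0$ and $\widehat{H}_\hbar\Phi_\hbar=\lambda_\hbar^H\Phi_\hbar$, one has $\widehat{H}_\hbar\tilde\psi_\hbar = \lambda_\hbar^H\tilde\psi_\hbar$, while a single integration by parts in $s$ (licit because $\chi_T\in\mathcal{C}_c^\infty((0,T))$) produces
$$(\widehat{L}_\hbar-E)\tilde\psi_\hbar = -\frac{i\hbar}{\sqrt{Q_\hbar}}\int\chi_T'(s)\,e^{-is(\widehat{L}_\hbar-E)/\hbar}\Phi_\hbar\,ds = O_{L^2}\!\left(\frac{\hbar}{\sqrt{Q_\hbar}}\right).$$
Multiplying by $\varepsilon_\hbar$ gives $(\widehat{B}_\hbar - \lambda_\hbar^H - \varepsilon_\hbar E)\tilde\psi_\hbar = O(\varepsilon_\hbar\hbar/\sqrt{Q_\hbar})$; combined with the $O(\varepsilon_\hbar^{N+1}) = O(\varepsilon_\hbar\hbar)$ Birkhoff remainder, $\psi_\hbar$ is a quasimode of $\widehat P_\hbar$ of width $O(\varepsilon_\hbar\hbar)$ with eigenvalue $\lambda_\hbar^H + \varepsilon_\hbar E \to 1$, provided $Q_\hbar$ is bounded away from zero.

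The delicate step and main obstacle is to identify the semiclassical measure of $\tilde\psi_\hbar$ as $\mu_T$ and to prove $Q_\hbar \to \|\chi_T\|^2_{L^2} > 0$. Writing the expectation
$$\langle\tilde\psi_\hbar, \Op_\hbar(a)\tilde\psi_\hbar\rangle = \frac{1}{Q_\hbar}\iint\chi_T(s)\chi_T(s')\,\langle e^{-is'(\widehat{L}_\hbar-E)/\hbar}\Phi_\hbar,\, \Op_\hbar(a)\, e^{-is(\widehat{L}_\hbar-E)/\hbar}\Phi_\hbar\rangle\, ds\, ds',$$
and applying Egorov's theorem for the flow $\phi^{\langle V\rangle}$ (which is the semiclassical limit of the flow generated by $\widehat L_\hbar$), the diagonal $s=s'$ contribution converges to $\int\chi_T(v)^2\int a\,d(\phi_v^{\langle V\rangle})_*\mu_0\,dv$, exactly the numerator of $\mu_T$. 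The hard part is to show that the off-diagonal ($s\neq s'$) cross terms collapse to a delta-on-diagonal in the limit; this does \emph{not} hold in general, since if $\Phi_\hbar$ were a joint $(\widehat{H}_\hbar, \widehat{L}_\hbar)$-eigenvector the construction would return the trivial measure $\mu_0$ rather than $\mu_T$. The novel ingredient, hinted at in the introduction, is to take $\Phi_\hbar$ as a coherent superposition of $\widehat{H}_\hbar$-eigenfunctions, obtained by smearing a phase-space coherent state along the $\phi^H$-flow; the near-orthogonality of Gaussian coherent states at distinct phase-space points ($\langle\psi^\hbar_{z_1},\psi^\hbar_{z_2}\rangle=O(e^{-|z_1-z_2|^2/c\hbar})$) then forces the needed delta-concentration of the cross terms and produces $\mu_T$, with the normalizing computation ($a\equiv 1$) yielding $Q_\hbar\to\|\chi_T\|^2_{L^2}$.
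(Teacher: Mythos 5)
Your overall strategy matches the paper's: reduce via the quantum Birkhoff normal form, then smear a coherent-state-based state along the $\phi^{\langle V\rangle}$-flow. In fact, after a Fubini-type reorganization, your two-stage construction (first average a coherent state along $\phi^H$ to get $\Phi_\hbar$, then smear it by $e^{-is(\widehat L_\hbar-E)/\hbar}$ against $\chi_T$) is the paper's single oscillatory integral \eqref{e:quasimode_I} over $\mathbb{T}^{d_E}\times\R$. The difference is that the paper treats both integrations jointly in a single stationary-phase analysis (Lemma \ref{concentration_minimal_set_2}), which is where the work is.

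That is exactly where your proposal has a genuine gap, and you say so yourself. The identification of the semiclassical measure---showing the double integral in $(s,s')$ localizes on the diagonal and produces $\mu_T$ rather than $\mu_0$---is not carried out. Merely invoking near-orthogonality of coherent states is not enough: one needs (i) the explicit propagation of coherent and excited states by the commuting multiflow (Theorem \ref{t:propagator_theorem}), (ii) the closed-form Gaussian structure of the cross-Wigner transform $\mathfrak{H}_{\nu,\nu'}[\mathbf{t},\mathbf{t}']$ of propagated Hermite wave packets, and (iii) the stationary-phase estimate near $\mathbf{t}=\mathbf{t}'$ together with the $L^1$-decay bound \eqref{e:estimate_norm__fourier_hermite} on the excited-state Fourier transforms to control the off-diagonal tails and the remainder. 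This is most of the technical content of the paper's proof of the theorem; your sketch locates the obstacle but does not overcome it.

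Two further points. First, the asserted limit $Q_\hbar\to\|\chi_T\|_{L^2}^2$ is not correct as stated: for a coherent-state-based $\Phi_\hbar$ the autocorrelation $\langle \Phi_\hbar, e^{i\rho(\widehat L_\hbar-E)/\hbar}\Phi_\hbar\rangle$ decays like $e^{-c\rho^2/\hbar}$, so $Q_\hbar\sim\sqrt{\hbar}\to 0$; the width bound $O(\varepsilon_\hbar\hbar)$ still holds because the numerator $\|\int\chi_T'(s)e^{-is(\widehat L_\hbar-E)/\hbar}\Phi_\hbar\,ds\|$ picks up the same $\sqrt{\hbar}$ factor, but you must track this cancellation rather than asserting $Q_\hbar$ stays bounded below. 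Second, your argument implicitly works for the extremal measures $\mu_0=\mu_{\omega,z_0}$ (Haar on a torus), which are exactly those realized by averaged coherent states; for a general $\mu_0\in\mathcal M(\widehat H_\hbar)$ the paper passes through the Krein--Milman theorem and G\'erard's disjoint-support orthogonality lemma \cite{Ger90} to handle cross-Wigner terms of superposed quasimodes with mutually disjoint limit measures. This reduction is missing from your proposal.
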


To go beyond this result and obtain quasimodes of width $o(\varepsilon_\hbar \hbar)$ for $\widehat{P}_\hbar$, we  need to impose some restrictions on the dynamics of the flows $\phi_t^H$ and $\phi_s^{\langle V \rangle}$ along the orbit issued from a point $z_0 \in H^{-1}(1)$.  

As mentioned before, the orbit of $\phi_t^H$ issued from a point $z\in\R^{2d}$ is dense in a torus $\cT_\omega(z)$ of dimension smaller or equal than $d_\omega$. 
This torus carries a unique probability measure $\mu_{\omega,z}$ that is invariant by the flow (see Section \ref{s:classical}).

\begin{definition}
\label{d:birkhoff_average}
Let $L \in S^0(\R^{2d})$, denote by $ \phi_{s}^{\langle L \rangle}$ the Hamiltonian flow of $\langle L \rangle$. We define $\mathcal{I}(H,\langle L \rangle)$ as the set of points $z_0 \in H^{-1}(1)$ such that the Birkhoff average 
\begin{equation}
  \frac{1}{T} \int_0^T \big(\phi_{s}^{\langle L \rangle} \big)_*  \mu_{\omega,z_0}  \, ds  
\end{equation}
converges as $T \to + \infty$ to some measure $\mathcal{A}_{\langle L \rangle}(\mu_{\omega, z_0})$ in the weak-$\star$ topology of Radon probability measures.
\end{definition}

\begin{remark}\label{r:2dinteg}
If $d = 2$, then $H^{-1}(1) = \mathcal{I}(H,\langle L \rangle)$ since the pair $(H,\langle L \rangle)$ defines a completely integrable system.
\end{remark}

\begin{remark}
Since $H^{-1}(1)$ is compact, there always exist points $z_0 \in H^{-1}(1)$ such that $X_{\langle L \rangle}(z_0) \in T_{z_0} \mathcal{T}_\omega(z_0)$. Hence $\mathcal{I}(H,\langle L \rangle) \neq \emptyset$. Moreover, for such points, $\mu_{\omega,z_0} \in \mathcal{M}(\widehat{H}_\hbar)$ is supported on $\mathcal{I}(H, \langle L \rangle)$.  Thus the subset of measures $\mu \in \mathcal{M}(\widehat{H}_\hbar)$ that are supported on $\mathcal{I}(H,\langle L \rangle)$ is also non-empty.
\end{remark}

\begin{remark}
For every Radon measure $\mu\in \mathcal{M}(H)$  supported on $\mathcal{I}(H,\langle L \rangle)$ the measure $\mathcal{A}_{\langle L \rangle}(\mu)$ defined by
\begin{equation}
\label{e:average_measure}
\mathcal{A}_{\langle L \rangle}(\mu) := \lim_{T \to + \infty} \frac{1}{T} \int_0^T \big(\phi_{ s}^{\langle L \rangle} \big)_*  \mu  \, ds,
\end{equation}
(the limit being taken in the weak-$\ast$ topology) is a Radon measure on $H^{-1}(1)$ that is invariant by the two flows $\phi_t^H$ and $\phi_s^{\langle L \rangle}$.
\end{remark}

\begin{teor}
\label{t:improved_quasimodes}
Assume \eqref{e:ass} holds. Then there exists a sequence $(r_\hbar)$ of non-negative real numbers satisfying $r_\hbar = o(\varepsilon_\hbar \hbar)$ such that
$$
\mathcal{Q}(\widehat{P}_\hbar, r_\hbar) \supseteq \big \{ \mathcal{A}_{\langle V \rangle}(\mu) \, : \,  \mu \in \mathcal{M}(\widehat{H}_\hbar) , \quad \supp \mu \subset \mathcal{I}(H,\langle V \rangle) \cap \langle V \rangle^{-1}(E), \quad E \in \R \big \}.
$$
Conversely, if $\varepsilon_\hbar = o(\hbar^{1+\gamma(\omega)})$, then for every sequence $(r_\hbar) \subset \R_+$ satisfying $r_\hbar = o(\varepsilon_\hbar \hbar)$, every $\mu \in \mathcal{Q}(\widehat{P}_\hbar, r_\hbar)$ satisfies: 
$$
\supp \mu \subset \cH^{-1}(\cE) \cap \langle V \rangle^{-1}(E),\quad \big( \phi_t^H \circ \phi_s^{\langle V \rangle} \big)_* \mu = \mu, \quad \forall t,s \in \R.
$$
for some $\cE \in \Sigma_\cH$ and some $E \in \R$.
\end{teor}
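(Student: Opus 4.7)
The statement splits into existence (the inclusion) and a converse rigidity statement, valid in the sharper window $\vareps_\hbar = o(\hbar^{1+\gamma(\omega)})$. Both directions rest on the quantum Birkhoff normal form of the appendix, which, under \eqref{e:ass}, produces a unitary $U_\hbar$ (with principal symbol $1$) such that
\[
\widehat{P}_\hbar^\sharp \;:=\; U_\hbar^* \widehat{P}_\hbar U_\hbar \;=\; \widehat{H}_\hbar \;+\; \vareps_\hbar \Op_\hbar(\la V \ra) \;+\; \cR_\hbar,
\]
where $\vareps_\hbar \Op_\hbar(\la V \ra)$ commutes with $\widehat{H}_\hbar$ modulo $O(\hbar^\infty)$, and the operator norm is bounded by $\|\cR_\hbar\|_{L^2 \to L^2} = O(\vareps_\hbar^2 \hbar^{-\gamma(\omega)})$ at leading order, the $\hbar^{-\gamma(\omega)}$ loss being incurred each time the cohomological equation $\{H, F\} = a - \la a \ra$ is solved during the iteration. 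The threshold $\vareps_\hbar = o(\hbar^{1+\gamma(\omega)})$ is precisely the regime in which $\|\cR_\hbar\| = o(\vareps_\hbar \hbar)$, so that $\cR_\hbar$ becomes invisible to $o(\vareps_\hbar\hbar)$-quasimodes.

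\textbf{Converse direction.} Set $\tilde\psi_\hbar := U_\hbar^* \psi_\hbar$; this is a quasimode of $\widehat{P}_\hbar^\sharp$ of width $o(\vareps_\hbar\hbar)$ producing the same semiclassical measure $\mu$ as $\psi_\hbar$, since $U_\hbar$ has principal symbol $1$. The joint invariance $(\phi^H_t \circ \phi^{\la V\ra}_s)_* \mu = \mu$ is provided directly by Theorem \ref{t:invariance_for_quasimodes}. For the support statement I would project $\tilde\psi_\hbar$ onto the spectral clusters of $\widehat{H}_\hbar$ near $1$, which are indexed by $\Sigma_\cH$ via the joint quantisation of the periodic Hamiltonians $\cH_1,\ldots,\cH_{d_\omega}$; extracting a subsequence one localises to a single cluster on which $\widehat{H}_\hbar$ equals a common eigenvalue $E_\hbar(\cE)$, giving $\supp\mu \subset \cH^{-1}(\cE)$. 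Restricted to this cluster, $\widehat{P}_\hbar^\sharp$ reduces to $E_\hbar(\cE)\,\mathrm{Id} + \vareps_\hbar \Op_\hbar(\la V\ra) + o(\vareps_\hbar\hbar)$; after a further extraction so that $\lambda_\hbar - E_\hbar(\cE) = \vareps_\hbar E + o(\vareps_\hbar)$ for some $E \in \R$, dividing the quasimode equation by $\vareps_\hbar$ yields $\Op_\hbar(\la V\ra - E)\tilde\psi_\hbar = o(1)$ in $L^2$, and testing against a symbol $|a|^2$ then gives $\supp\mu \subset \la V\ra^{-1}(E)$.

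\textbf{Existence direction.} Given $\mu \in \cM(\widehat{H}_\hbar)$ supported on $\cI(H, \la V\ra) \cap \la V\ra^{-1}(E)$, Theorem \ref{t:quasimodes_1} delivers, for every fixed $T>0$ and every $\chi_T \in \mathcal{C}_c^\infty((0,T))$, a quasimode of width $O(\vareps_\hbar \hbar)$ whose semiclassical measure is the time average $\mu_T$ of \eqref{e:evolved_measure}. My plan is to refine this construction by letting $T = T_\hbar \to +\infty$ with $\chi_{T_\hbar}(s) = T_\hbar^{-1/2} \chi(s/T_\hbar)$ for a fixed $\chi \in \mathcal{C}_c^\infty((0,1))$. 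Inspection of the construction in Theorem \ref{t:quasimodes_1} shows that the $L^2$-residual splits into the Birkhoff remainder, which is $o(\vareps_\hbar \hbar)$ by the normal form, plus a boundary term of size $\vareps_\hbar \hbar \, \|\chi_{T_\hbar}'\|_{L^2}/\|\chi_{T_\hbar}\|_{L^2} = O(\vareps_\hbar \hbar / T_\hbar)$. Letting $T_\hbar \to +\infty$ slowly enough yields the required width $o(\vareps_\hbar \hbar)$, while $\supp \mu \subset \cI(H, \la V\ra)$ forces $\mu_{T_\hbar} \to \cA_{\la V \ra}(\mu)$ in the weak-$*$ topology by the very definition of $\cI(H,\la V\ra)$.

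\textbf{Main obstacle.} The technical heart is the precise bookkeeping of small-divisor losses in the quantum Birkhoff normal form: the Diophantine exponent $\gamma(\omega)$ must be tracked through every homological step of the iteration in just the right way to produce the sharp threshold $\hbar^{1+\gamma(\omega)}$ in the converse. In the existence direction, a secondary difficulty is that the convergence defining $\cI(H, \la V\ra)$ is only weak-$*$ and not uniform in $\mu$, so the rate $T_\hbar \to +\infty$ must be tuned by a diagonal extraction against a countable dense family of test symbols in order to produce a single width $(r_\hbar)$ valid simultaneously for all admissible $\mu$.
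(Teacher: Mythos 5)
Your two-step plan — dilate the bump function as $T_\hbar\to\infty$ below the Ehrenfest time in the existence direction; conjugate to the normal form, invoke Theorem \ref{t:invariance_for_quasimodes} for the joint invariance, project onto a single spectral cluster, and divide by $\vareps_\hbar$ to force $\supp\mu\subset\la V\ra^{-1}(E)$ in the converse — is precisely the paper's route, and the existence half is essentially complete. However, your explanation of \emph{where} the exponent $\gamma(\omega)$ enters is wrong, and the misattribution is more than cosmetic.

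You claim the Birkhoff remainder carries a loss $\hbar^{-\gamma(\omega)}$ per cohomological step, so that $\|\cR_\hbar\|=O(\vareps_\hbar^2\hbar^{-\gamma(\omega)})$, and you present the threshold $\vareps_\hbar = o(\hbar^{1+\gamma(\omega)})$ as the condition making this remainder $o(\vareps_\hbar\hbar)$. That is not how the normal form in Proposition \ref{averaging-lemma} works. The cohomological equation $\{H,F\}=a-\la a\ra$ is solved via
\[
F=\frac{1}{(2\pi)^{d_\omega}}\sum_{k\neq 0}\frac{a_k}{\,i\,k\cdot v\,},
\]
and the Diophantine bound $|k\cdot v|\geq\varsigma'|k|^{-\gamma}$ only costs a polynomial factor in $k$, which is absorbed by the rapid decay of the Fourier coefficients $a_k$ of a symbol in $S^0$; the resulting $F$ lies in $S^0$ with $\hbar$-independent seminorms, the conjugating operators $\cU_j$ are uniformly bounded, and the remainder in \eqref{quantum_normal_form} is genuinely $O_{\cL(L^2)}(\vareps_\hbar^{N+1})$ with no $\hbar^{-\gamma}$ loss. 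Under assumption \eqref{e:ass} one simply takes $N$ so large that $\vareps_\hbar^{N}=o(\hbar)$; this needs only $\alpha>0$ and has nothing to do with $\gamma(\omega)$.

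The exponent $\gamma(\omega)$ enters instead through the \emph{spectral gap} of $\widehat H_\hbar$: by the partially Diophantine hypothesis, eigenvalues of $\widehat H_\hbar$ near energy $1$ are separated by at least $C\hbar^{1+\gamma(\omega)}$, cf.\ \eqref{e:spacing}. After conjugation, the transformed sequence is an $O(\vareps_\hbar)$-quasimode for $\widehat H_\hbar$, and one may project it onto a single $\widehat H_\hbar$-eigenspace (this is Lemma \ref{l:projection_lemma}) exactly when $\vareps_\hbar$ is negligible compared to the gap, i.e.\ $\vareps_\hbar = o(\hbar^{1+\gamma(\omega)})$. Only then does the projection argument produce, after dividing by $\vareps_\hbar$, an $o(\hbar)$-quasimode for $\la\widehat V_\hbar\ra$, giving $\supp\mu\subset\la V\ra^{-1}(E)$, and it is Proposition \ref{p:localization} (via the result of \cite{ArMa20}) that then yields $\supp\mu\subset\cH^{-1}(\cE)$. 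If you carried out your converse relying on your stated Birkhoff remainder bound rather than on the eigenvalue-spacing estimate, the spectral projection step would have no justification and the argument would not close. With that correction, the rest of your write-up matches the paper.
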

\begin{remark}
Theorem \ref{t:main_2d} directly follows from Theorem \ref{t:improved_quasimodes} taking into account Remark \ref{r:2dinteg}.
\end{remark}
Theorems \ref{t:quasimodes_1} and \ref{t:improved_quasimodes} (and ultimately Theorem \ref{t:main_2d}) rely on a construction of quasimodes that extends some of the techniques introduced in \cite{Nonnenmacher17} based on results on propagation of coherent states (as summarized in \cite{Robert12}, see also \cite{Paul_Uribe93}). In \cite{Nonnenmacher17}, a system of adapted coordinates around a hyperbolic periodic orbit is used in order to obtain optimal estimates of the width of quasimodes in that particular setting. The construction presented Section \ref{s:qm} uses a stationary-phase argument that is independent of a particular choice of coordinates, although it does not aim at optimizing widths of the quasimodes obtained. The description of the propagation of coherent and excited states by the quantum multiflow of \cite{Robert12} and the properties of these states with respect to the Wigner transform are sufficient to compute the Wigner function of our desired quasimode in a rather flexible way. This allows us to reach any multiorbit having well defined Birkhoff average, in the sense of Definition \ref{d:birkhoff_average}, and not necessarily being periodic or quasiperiodic.

The other ingredient that comes into play is a Birkhoff normal form form for $\widehat{P}_\hbar$, (Proposition \ref{averaging-lemma} of Appendix \ref{averaging_method}) that extends to the case of general vectors of frequencies those given in \cite{CharVNgoc,Sj92} (see also \cite{Gui_Ur_Wa12} for closely related results). Refinements of these normal forms have been given in \cite{Gui_Ur_Wa15,Hall_Hit_Sjo15, Le_Floch14} for certain classes of Hamiltonians. In the particular case $d=2$ and $d_\omega = 1$, Theorem \ref{t:main_2d} covers more general situations than those that may in principle be reached, for instance, by the normal form given in \cite[Theorem 4.1]{Gui_Ur_Wa15}. In particular, if the reduction of $\langle V \rangle$ by the periodic flow is a Morse function (not necessarily perfect), then our result describes the semiclassical measures of quasimodes of width $o(\varepsilon_\hbar \hbar)$ concentrating on periodic orbits of the harmonic oscillator which are saddle points for $\langle V \rangle$, hence hyperbolic points for its reduction by the periodic flow (see also \cite{Le_Floch14} for Bohr-Sommerfeld conditions on hyperbolic points of a compact Riemannian surface).  On the other hand, notice that no invariant probability measure can be supported on homoclinic or heteroclinic orbits of the flow $\phi_s^{\langle V \rangle}$ connecting saddle points outside these limit points.

\subsection{Structure of the article} 
The necessary elements of the classical dynamics of Harmonic Oscillators are recalled in Section \ref{s:classical}. Section \ref{s:general} presents the proof of Theorem \ref{t:invariance_for_quasimodes} as well as a generalization, Theorem \ref{t:general_invariance}.  In Section \ref{s:sep} we prove Theorem \ref{t:quasimodes_2} as well as a localization result for quasimodes in the Diophantine case, Proposition \ref{p:localization}. Section \ref{s:cs} describes the necessary tools of the theory of propagation of coherent and excited states that are needed in the construction of quasimodes in Section \ref{s:qm}. The proofs of Theorems \ref{t:quasimodes_1} and \ref{t:improved_quasimodes} is presented in Section \ref{s:proofs}, as well as that of Corollary \ref{c:corol_general_invariance}, a partial converse of Theorem \ref{t:general_invariance}. Finally Appendix \ref{averaging_method} presents the construction of a Quantum Birkhoff Normal Form, which is of independent interest, that is used in the proofs of several of the aforementioned results.

\subsection*{Acknowledgments}
The authors would like to warmly thank Stéphane Nonnenmacher and Gabriel Rivière for their useful comments on an early version of the manuscript.
This research have been supported by MTM2017-85934-C3-3-P (MINECO, Spain). VA has been supported by a predoctoral grant from Fundación La Caixa - Severo Ochoa International Ph.D. Program at the Ins\-tituto de Ciencias Matemáticas (ICMAT-CSIC-UAM-UC3M-UCM), by the European Research Council (ERC) under the European Union Horizon 2020 research and innovation programme (grant agreement No. 725967), and by ANR project Aléatoire, Dynamique et Spectre.

\section{Classical Averaging}\label{s:classical}

In this section we discuss several useful properties of classical and quantum averages, that are be used in the article.

\subsection{Review on the classical dynamics of harmonic oscillators}\label{ss:classicalD}
The classical harmonic oscillator forms a completely integrable dynamical system. The Hamiltonian $H$ can be written as a function of $H_1, \ldots, H_d$, defined by
\begin{equation}\label{e:hj}
H_j(x,\xi) := \frac{1}{2} \big( \xi_j^2 + x_j^2 \big),
\end{equation}
by putting
\begin{equation}
\label{e:linear_for_H}
H = \mathcal{L}_\omega(H_1, \ldots, H_d),
\end{equation}
where $\mathcal{L}_\omega : \R^d_+ \to \R$ is the linear form defined by $\mathcal{L}_\omega(E) = E \cdot \omega$, and $\{ H_j , H_k \} = 0$ for every $j,k\in \{1, \ldots , d \}$.  As a consequence, $\phi_t^H$, the Hamiltonian flow of $H$, can be written as 
\[
\phi_t^H(z)=\Phi_{t\omega}(z), \quad t\in\R, \;z = (x,\xi)\in\R^{2d},
\] 
where
\begin{equation}
\label{e:multiflow}
\Phi_\tau(z) := \phi_{t_d}^{H_d} \circ \cdots \circ \phi_{t_1}^{H_1}(z), \quad \tau=(t_1, \ldots, t_d) \in \R^d,
\end{equation}
and $\phi_{t}^{H_j}$ denotes the flow of $H_j$. These flows are totally explicit, they act as a rotation of angle $t$ on the plane $(x_j,\xi_j)$. If one identifies points $(x_j,\xi_j)$ in that plane to the complex numbers $z_j:=x_j+i\xi_j$, then $\phi_{t}^{H_j}$ acts on that plane as $e^{-it}z_j$ and fixes the points in its orthogonal complement. Therefore, $\tau\mapsto\Phi_\tau(z)$ is $2\pi\Z^d$-periodic for every $z\in\R^{2d}$
and we will identify it to a function defined on the torus $\mathbb{T}^d:=\mathbb{R}^d/2\pi\mathbb{Z}^d$. 

Let
\begin{equation}
\label{e:various_notation}
\mathbb{M}:=(H_1,\hdots,H_d),\quad X:=(0,\infty)^d,\quad \Sigma:=\R^d_+\setminus X.
\end{equation}
For every $E\in\R^d_+$, let $\mathcal{T}_E:=\mathbb{M}^{-1}(E)$; these sets are invariant by the flow $\phi^H_t$. If $E\in X$ then $\mathcal{T}_E$ is Lagrangian and, for every $z_0\in \mathbb{M}^{-1}(E)$,
\[
\T^d\ni\tau \longmapsto \Phi_\tau(z_0)\in\cT_E
\]
is a diffeomorphism; moreover, 
\[
\phi^H_t\circ \Phi_\tau(z_0) = \Phi_{\tau +t\omega}(z_0),\quad \forall t\in\R.
\]
Therefore, Konecker's theorem then shows that the orbit of $\phi^H_t$ of any point $z_0\in \mathbb{M}^{-1}(X)$ is dense in a $d_\omega$-dimensional subtorus $\cT_\omega(z_0)$ of $\cT_E$.
To deal with the general case, define for $v\in\R^d$ and $E\in\R^{d}_+$, 
\begin{equation}
\label{e:projection_degeneracies}
\pi_E(v):=(\mathbf{1}_{(0,\infty)}(E_1)v_1,\hdots,\mathbf{1}_{(0,\infty)}(E_d)v_d).
\end{equation}
When $\mathbb{M}(z_0)=E\in\Sigma$, the map $\Phi_{z_0}$ is no longer a diffeomorphism but one still has:
\[
\phi^H_t\circ \Phi_\tau(z_0) = \Phi_{\tau +t\pi_{E}(\omega)}(z_0),\quad \forall t\in\R.
\]
Therefore, the orbit issued from such $z_0$ is again dense in a subtorus of $\cT_E$ of dimension lower than $d_\omega$, which we will still denote by $\cT_\omega(z_0)$. 

\subsection{Classical averages}
\label{s:classical_averages}
Let $\nu_n\in\IZ^d$, $n=1,\hdots,d_\omega$ be an orthogonal basis of 
\[
\Lambda_\omega^\bot = \{k\in\Z^d\,:\,\omega\cdot k =0\}^\bot 
\]
consisting of primitive vectors\footnote{A vector in $\Z^d$ is \textit{primitive} provided its components are relatively prime integers.}.
Write $\omega =v_1\nu_1+\hdots+v_{d_\omega}\nu_{d_\omega}$; one can check that $d_\omega$ coincides with the dimension of the linear subspace of $\IR$ over $\IQ$ spanned by the components of $\omega$ and that $v_1,\hdots,v_{d_\omega}$ form a basis of this space. This implies in particular that all $v_n$, $n=1,\hdots,d_\omega$ are different from zero. Denote by $\rho_\omega : \R^{d_\omega}\To \R^d$ the linear map that sends the canonical basis of $\R^{d_\omega}$ onto the basis $\{ \nu_1,\hdots, \nu_{d_\omega} \}$. Clearly, setting $v=(v_1,\hdots,v_{d_\omega})$,
\[
\rho_\omega\,:\,\R^{d_\omega}\To \Lambda_\omega^\bot \;\text{ is a linear isomorphism and } \; \rho_\omega(v)=\omega.
\]
The fact that the basis we have chosen consists of primitive vectors implies that $\rho_\omega^{-1}(\Lambda_\omega^\bot\cap \IZ^d)=\IZ^{d_\omega}.
$
Note also that 
\begin{equation}\label{e:v_nonresonant}
    k\cdot v \neq 0,\quad \forall k\in\IZ^{d_\omega}\setminus\{0\}; 
\end{equation}
this follows from $k\cdot v =\tilde{k}\cdot \omega$ where $\tilde{k}:=\sum_{n=1}^{d_\omega} |\nu_n|^{-2} k_n \nu_n$. Since $\tilde{k}\in\IQ^d$, one cannot have $\tilde{k}\cdot \omega = 0$ unless $k=0$. If, in addition, $\omega$ is partially Diophantine \eqref{e:partially_diophantine}, then $v$ is Diophantine: for every $k\in\IZ^{d_\omega}\setminus\{0\}$,
\begin{equation}
\label{e:partially_diophantine2}
|k\cdot v|=\vert \tilde{k} \cdot \omega \vert \geq \frac{\varsigma'}{\vert k \vert^{\gamma}}, \quad \varsigma' := \varsigma\prod_{n=1}^{d_\omega}|\nu_n|^{-2\gamma}.
\end{equation}
Define, for $\tau\in\R^{d_\omega}$, 
\[
\Phi^\cH_\tau := \phi^{\cH_1}_{\tau_1}\circ\cdots\circ\phi^{\cH_{d_\omega}}_{\tau_{d_\omega}}.
\]
One has $\Phi^\cH_\tau=\Phi_{\rho_\omega(\tau)}$ and $\phi^H_t=\Phi^\cH_{tv}$; therefore $\tau\mapsto\Phi^\cH_\tau$ is $2\pi\IZ^{d_\omega}$-periodic and can be identified to a map on $\T^{d_\omega} := \rho_\omega^{-1}(\T_\omega) = \IR^{d_\omega}/2\pi \mathbb{Z}^{d_\omega}$.

Given any function $a \in \mathcal{C}^\infty(\R^{2d})$,  we define its average $\langle a \rangle$ along the flow $\phi_t^H$ as
\begin{equation}
\label{average-definition1}
\langle a \rangle := \lim_{T \to \infty} \frac{1}{T} \int_0^T a \circ \phi_t^H dt = \lim_{T\to\infty}\frac{1}{T}\int_0^T a\circ\Phi^\cH_{tv}dt.
\end{equation}
\begin{propop}\label{p:av_prop}
The limit \eqref{average-definition1} is well defined in the $\mathcal{C}^\infty(\R^{2d})$ topology and
\begin{equation}
\label{e:different_formulas_average}
\langle a \rangle = \frac{1}{(2\pi)^{d_\omega}}  \int_{\T^{d_\omega}} a \circ \Phi_\tau^\cH d\tau.
\end{equation}
In addition, for every $a\in\mathcal{C}^{\infty}(\mathbb{R}^{2d})$ the following hold:
\begin{itemize}
    \item[(i)]  If $d_\omega = d$ then there exists $f\in\mathcal{C}^\infty(\R^{d})$ such that $\langle a \rangle = f\circ\mathbb{M}$.
    \item[(ii)] $\{\langle a \rangle,\langle b \rangle\}=0$ for every $b\in \mathcal{C}^{\infty}(\mathbb{R}^{2d})$ if and only if $f\in\mathcal{C}^\infty(\R^{d_\omega})$ exists such that
    \begin{equation}
    \label{e:function_classic}
    \langle a \rangle = f\circ\cH.
    \end{equation}
\end{itemize}
\end{propop}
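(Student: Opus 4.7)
The plan is to first establish the integral representation and $\mathcal{C}^\infty$-convergence via Fourier analysis on $\T^{d_\omega}$, and then to deduce (i) and (ii) from this representation together with Kronecker density of the orbit $\{tv \bmod 2\pi\Z^{d_\omega}\}$ in $\T^{d_\omega}$. For the convergence claim, I would observe that $\phi^H_t = \Phi^\cH_{tv}$ and that, for any fixed $z\in\R^{2d}$, the map $\tau\mapsto a(\Phi^\cH_\tau(z))$ is smooth and $2\pi\Z^{d_\omega}$-periodic, hence admits a Fourier expansion with coefficients $c_k(z)$ smooth in $z$ and rapidly decaying in $k$. The Cesàro average then satisfies
\[
\frac{1}{T}\int_0^T a(\Phi^\cH_{tv}(z))\, dt = c_0(z) + \sum_{k\neq 0} c_k(z)\,\frac{e^{iTk\cdot v}-1}{iTk\cdot v},
\]
and by the non-resonance \eqref{e:v_nonresonant}, $k\cdot v\ne 0$ for every $k\ne 0$; rapid decay of $c_k$ controls the tail uniformly, yielding pointwise convergence to $c_0(z) = (2\pi)^{-d_\omega}\int_{\T^{d_\omega}} a\circ\Phi_\tau^\cH\, d\tau$. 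Upgrading to $\mathcal{C}^\infty$-convergence on compact sets is routine: each $\partial_z^\alpha(a\circ\Phi^\cH_\tau)$ is of the same form with $a$ replaced by a finite combination of derivatives of $a$ (since $\Phi^\cH_\tau$ is linear in $z$ with derivatives bounded in $\tau$), and the Fourier argument applies identically.

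For part (i), the assumption $d_\omega = d$ means $\{\nu_1,\dots,\nu_d\}$ is a basis of $\R^d$, so \eqref{e:different_formulas_average} realizes $\langle a\rangle$ as the integral over the full $\T^d$-action of the commuting rotations $\phi^{H_j}_t$, one in each plane $(x_j,\xi_j)$. Consequently $\langle a\rangle$ is invariant under each individual rotation $\phi^{H_j}_t$, and an iterated application of the classical theorem that a smooth $\mathrm{SO}(2)$-invariant function on $\R^2$ is a smooth function of $x^2+\xi^2$ produces $f\in\mathcal{C}^\infty(\R^d)$ with $\langle a\rangle = f\circ \mathbb{M}$.

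For part (ii), the key preliminary observation is that every $\phi^H_t$-invariant continuous function is in fact $\Phi^\cH_\tau$-invariant on $\T^{d_\omega}$: by Kronecker's theorem applied to the non-resonant vector $v$, the subgroup $\{tv\bmod 2\pi\Z^{d_\omega}\}$ is dense in $\T^{d_\omega}$, so continuity promotes the invariance. In particular, every $\langle b\rangle$ (and hence $\langle a\rangle$) Poisson-commutes with each $\cH_n$. The ``if'' direction then follows from the Leibniz rule: if $\langle a\rangle = f\circ\cH$ then $\{\langle a\rangle,\langle b\rangle\} = \sum_n(\partial_n f)\circ\cH\cdot\{\cH_n,\langle b\rangle\} = 0$. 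For the ``only if'' direction, I would pass to action-angle coordinates $(I,\vartheta)$ on the open dense set $X = \{H_1>0,\ldots,H_d>0\}$, where $H_j = I_j$ and $\Phi^\cH_\tau$ translates $\vartheta$ by $\rho_\omega(\tau)$. Testing $\{\langle a\rangle,\langle b\rangle\} = 0$ against the $\Phi^\cH$-invariant functions $\langle b\rangle = b(I)$ (arbitrary smooth $b$) gives $\partial_{\vartheta}\langle a\rangle = 0$, so $\langle a\rangle = F(I)$ on $X$; testing then against the globally-smooth $\Phi^\cH$-invariant polynomials $p_k(x,\xi) = \mathrm{Re}\prod_j(x_j - i\xi_j)^{k_j^+}(x_j+i\xi_j)^{k_j^-}$ for $k\in\Lambda_\omega$ (which in $(I,\vartheta)$ coordinates take the form $g_k(I)\cos(k\cdot\vartheta)$ with $g_k$ nonvanishing on $X$) forces $k\cdot\nabla_I F = 0$ for every $k\in\Lambda_\omega$. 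Thus $\nabla_I F\in\Lambda_\omega^\perp$ and $F$ factors through the projection onto $\Lambda_\omega^\perp$, which is exactly the map $I\mapsto\cH$. Extending the equality $\langle a\rangle = f\circ\cH$ from $X$ to $\R^{2d}$ by continuity finishes the argument.

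The main obstacle is the smoothness of the functions $f$ obtained in (i) and (ii): exhibiting $f$ on the regular image $\cH(X)$ (respectively $\mathbb{M}(X) = (0,\infty)^d$) is straightforward, but promoting it to an element of $\mathcal{C}^\infty(\R^{d_\omega})$ (resp. $\mathcal{C}^\infty(\R^d)$) requires exploiting the full $\T^d$-equivariance of $\langle a\rangle$ across the singular locus via the Schwarz-type theorem on rotation-invariant smooth functions; this is the step where I would invest the most care.
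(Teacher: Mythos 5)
Your derivation of the $\mathcal{C}^\infty$-convergence and of \eqref{e:different_formulas_average} by Fourier series on $\T^{d_\omega}$ coincides with the paper's. The non-trivial direction of (ii) you prove by a genuinely different route. The paper extracts $\{\langle a\rangle, H_j\} = 0$ for every $j$ from the hypothesis (take $b = H_j$), applies Whitney's even-function theorem \cite{Whit43} to the multiradial part to obtain $\tilde{f}\in\mathcal{C}^\infty(\R^d)$ with $\langle a\rangle = \tilde{f}\circ\mathbb{M}$, and then uses Liouville's theorem to build local coordinates $b_i$ conjugate to $\cH'_i$ near a regular point, testing the hypothesis against these to kill the $\cH'$-dependence. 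You instead work in action-angle variables on $X$, test against $\langle b\rangle = b(I)$ to get $\partial_\vartheta\langle a\rangle = 0$, and then test against the \emph{global} $\Phi^\cH_\tau$-invariant polynomials $p_k$ with $k\in\Lambda_\omega$ to get $k\cdot\nabla_I F = 0$, hence $\nabla_I F\in\Lambda_\omega^\perp$ since $\Lambda_\omega$ spans $\langle\Lambda_\omega\rangle$. Using globally defined polynomial symbols is cleaner than the paper's only-locally-defined $b_i$, which have to be cut off along $\Phi^\cH_\tau$-invariant tubes in order to qualify as $\langle b\rangle$ for some $b\in\mathcal{C}^\infty(\R^{2d})$. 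Your direct treatment of (i) via $\T^d$-invariance and Whitney is also correct; the paper reads (i) off from (ii).

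Where your sketch is genuinely incomplete is the step you flag yourself: upgrading the factorization on $X=\mathbb{M}^{-1}((0,\infty)^d)$ to a smooth $f\in\mathcal{C}^\infty(\R^{d_\omega})$ with $\langle a\rangle = f\circ\cH$ on all of $\R^{2d}$. ``Extending by continuity'' yields a continuous but not a priori smooth $f$ at the strata where some $H_j=0$. The correct order of operations, which the paper follows, is to apply Whitney \emph{before} factoring out the $\cH'$-dependence: from $\langle a\rangle = F(I)$ on $X$ together with density you already get $\T^d$-invariance of $\langle a\rangle$ everywhere, whence Whitney furnishes a globally smooth $\tilde{f}$ with $\langle a\rangle=\tilde{f}\circ\mathbb{M}$; your constraint $k\cdot\nabla\tilde{f}=0$ then extends from $(0,\infty)^d$ to $[0,\infty)^d$ by continuity of $\nabla\tilde{f}$; finally one factors $\tilde{f}=f\circ\cH$ on $[0,\infty)^d$ and extends $f$ to $\R^{d_\omega}$. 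Even so, the last factorization is not entirely routine — the orthogonal linear section of $\cH$ need not map $[0,\infty)^d$ into itself once $d-d_\omega\geq 2$, so a jet-level Whitney extension on the convex cone $\cH([0,\infty)^d)$ is what is really required (a point the paper's proof also leaves implicit). So your instinct to ``invest the most care'' there is exactly right; the rest of your proof is sound.
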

\begin{proof}
To see this, write $a \in \mathcal{C}^\infty(\R^{2d})$ as a Fourier series as follows. First define
\begin{equation}
\label{e:fourier_coefficients_oscillator}
a_k := \int_{\T^{d_\omega}} a \circ \Phi^\cH_\tau e^{-ik\cdot \tau}d\tau,\quad \forall k\in \IZ^{d_\omega}.
\end{equation}
Since the function $(\tau,z)\mapsto a \circ \Phi^{\mathcal{H}}_{\tau}(z)$ is smooth on $\T^{d_\omega}\times \R^{2d}$  it follows that the Fourier series
\[
a\circ\Phi^\cH_\tau =\frac{1}{(2\pi)^{d_\omega}}\sum_{k\in\IZ^{d_\omega}}a_k e^{ik\cdot\tau}
\]
converges absolutely in $\cC^\infty(K)$ for every compact set $K\subset\R^{2d}$.  Identity \eqref{e:v_nonresonant} and Kronecker's theorem then imply that the average $\langle a \rangle$ is given by 
\begin{equation*}
\langle a \rangle = \lim_{T\to\infty}\frac{1}{T}\int_0^T \frac{1}{(2\pi)^{d_\omega}}\sum_{k\in\IZ^{d_\omega}} a_k e^{itk\cdot v}dt=\frac{1}{(2\pi)^{d_\omega}}  \int_{\T^{d_\omega}} a \circ \Phi_\tau^\cH d\tau,
\end{equation*}
which concludes the proof of \eqref{e:different_formulas_average}.
Let us now prove (ii). The converse implication is immediate, let us focus on the direct one. Consider a linear isomorphism $\rho'_\omega:\R^{d-d_\omega}\To\langle \Lambda_\omega\rangle$ and define $\varrho:\IR^d=\IR^{d_\omega}\oplus\IR^{d-d_\omega}\To \Lambda_\omega^\bot\oplus\langle \Lambda_\omega\rangle=\IR^d$ by $\varrho(\tau, \tau') = \rho_\omega(\tau)+\rho'_\omega(\tau')$. Note that,
\begin{align*}
&\varrho^{*}\circ \mathbb{M}  = (\cH, \mathcal{H}'),
\end{align*}
where $\varrho^{*}:\R^{d}\To\R^{d_\omega}\oplus\R^{d-d_\omega}$ denotes the dual map. The hypothesis implies that $\langle a \rangle$ commutes with all the $H_j$, $j=1,\hdots, d$, which in turn gives:
\begin{align*}
\langle a \rangle = \frac{1}{(2\pi)^{d}} \int_{\T^{d}} \langle a \rangle \circ \Phi_\tau d\tau 
  = \tilde{f} \circ \mathbb{M},
\end{align*}
where $\tilde{f}\in\cC^\infty(\IR^d)$ is constructed as follows: consider the multiradial part of $a$:
\[
a_{\mathrm{rad}}(r_1,\hdots,r_d):=\frac{1}{(2\pi)^{d}} \int_{\T^{d}} \tilde{a} \big( r_1 e^{-i t_1}, \ldots, r_d e^{-it_d} \big) dt_1 \cdots dt_d
\]
where $\tilde{a}$ is obtained from $a$ after identifying $(x,\xi) \equiv z = x + i \xi$. A theorem of Whitney \cite{Whit43} then shows that $a_{\mathrm{rad}}(\sqrt{2E_1},\dots,\sqrt{2E_d})=\tilde{f}(E_1,\hdots,E_d)$ for some $\tilde{f}\in\cC^\infty(\IR^d)$ which has the claimed property.
Define now 
\begin{equation}
\label{e:fdeH}
f := \tilde{f} \circ (\varrho^*)^{-1}.
\end{equation}
Note that the map $(x,\xi)\mapsto (\cH(x,\xi),\cH'(x,\xi))$ has rank $d$ for $(x,\xi)\in H^{-1}(X)$; Liouville's theorem then implies the existence, around any point $(x_0,\xi_0)\in H^{-1}(X) $, of locally defined somooth functions $b_1,\hdots,b_{d-d_\omega}$ such that for every $i=1,\hdots,d-d_\omega$ and $(x,\xi)$ in a neighborhood of $(x_0,\xi_0)$,
\[
\{\cH_j,b_i\}(x,\xi) =\{\cH_k',b_i\}(x,\xi)=0,\quad \{\cH_i',b_i\}(x,\xi)=1,
\]
for $j=1,\hdots,d_\omega$ and $k=1,\hdots,d-d_\omega$ with $i\neq k$. 
Since for any $b \in \cC^\infty(\R^{2d})$,
\[
0=\{\langle a \rangle,\langle b \rangle \} = \sum_{j=1}^{d-d_\omega}\partial_{\cH'_j}f(\cH,\cH')\{\cH'_j,\langle b \rangle\},
\]
one finds, setting $\langle b \rangle = b_i$ that actually $f$ does not depend on $\cH'$. Finally (i) follows from (ii) since when $d_\omega = d$, the function $\langle a \rangle$ commutes with all the $H_j$ for $j = 1, \ldots, d$.
\end{proof}

\section{Quantum Averaging and Classical Invariance}\label{s:general}

We now present the proof of Theorem \ref{t:invariance_for_quasimodes} and an example of generalization, Theorem \ref{t:general_invariance}, via the quantum Birkhoff normal form given in Appendix \ref{averaging_method}. Assuming that $\omega$ is partially Diophantine, one can conjugate the operator $\widehat{P}_\hbar$ by some suitable unitary operator so that the perturbation $\varepsilon_\hbar \widehat{V}_\hbar$ is averaged by the quantum flow $e^{-i\frac{t}{\hbar}\widehat{H}_\hbar}$ up to order $\varepsilon_\hbar^N$ for arbitrary large $N$. However, the first order approximation can be constructed without making any assumption on the frequency vector. Let $T > 0$ and let $a \in S^0(\R^{2d})$, we define its quantum average $\langle \Op_\hbar(a) \rangle_T$ at time $T$ by
\begin{equation}
\label{e:quantum_averageT}
\langle \Op_\hbar(a) \rangle_T :=\frac{1}{T} \int_0^T e^{-i\frac{t}{\hbar}\widehat{H}_\hbar} \, \Op_\hbar(a) \, e^{i\frac{t}{\hbar}\widehat{H}_\hbar} dt.
\end{equation}
\begin{propop}
\label{p:quantum_average}
The operators $\langle \Op_\hbar(a) \rangle_T$ converge in the strong operator  $\mathcal{L}(L^2)$-norm as $T\to\infty$ to an operator $\langle \Op_\hbar(a) \rangle$ that satisfies
\begin{equation}
\label{e:quantum_average}
\langle \Op_\hbar(a) \rangle = \Op_\hbar(\langle a \rangle).
\end{equation}
In addition, for every $a\in S^{0}(\mathbb{R}^{2d})$ the following hold:
\begin{itemize}
    \item[(i)] If $d_\omega = d$ then there exists $f_\hbar\in\mathcal{C}^\infty(\R^{d})$ such that 
    \[
    \langle \Op_\hbar(a) \rangle = f_\hbar(\Op_\hbar({H}_{1}), \ldots, \Op_\hbar({H}_{d})).
    \]
    \item[(ii)] $[\langle \Op_\hbar(a) \rangle,\langle \Op_\hbar(b) \rangle]=0$ for every $b\in S^{0}(\mathbb{R}^{2d})$ if and only if $f_\hbar\in S^0(\R^{d_\omega})$ exists such that
    \[
    \langle \Op_\hbar(a) \rangle = f_\hbar(\Op_\hbar(\cH_1), \ldots, \Op_\hbar(\cH_{d_\omega})).
    \]
    Moreover, 
    \begin{equation}\label{e:sym_compos}
    f_\hbar(\Op_\hbar(\cH_1), \ldots, \Op_\hbar(\cH_{d_\omega}))=\Op_\hbar(f\circ \cH)+ O_{\cL(L^2(\R^d))}(h),
    \end{equation}
    where $f$ is given by \eqref{e:function_classic}.
\end{itemize}
\end{propop}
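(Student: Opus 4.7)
The plan is to reduce everything to the exact Egorov identity for the quadratic Hamiltonian $H$ and then exploit Proposition \ref{p:av_prop}. Since $H$ is a quadratic polynomial on $T^*\R^d$, its Hamiltonian flow is linear symplectic, so the metaplectic representation yields the exact intertwining
\[
e^{-i\frac{t}{\hbar}\widehat{H}_\hbar}\Op_\hbar(a)\, e^{i\frac{t}{\hbar}\widehat{H}_\hbar}= \Op_\hbar(a\circ\phi_t^H),\quad \forall a\in S^0(\R^{2d}),\ \forall t\in\R.
\]
Averaging in $t$ gives $\langle\Op_\hbar(a)\rangle_T=\Op_\hbar\bigl(T^{-1}\int_0^T a\circ\phi_t^H\,dt\bigr)$. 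Proposition \ref{p:av_prop} shows that the averaged symbols converge to $\langle a\rangle$ in $\cC^\infty(\R^{2d})$; since $\phi_t^H$ preserves all seminorms of $S^0$ the partial averages stay uniformly bounded in $S^0$, and the quantitative Calderón--Vaillancourt theorem upgrades this $\cC^\infty$ convergence to strong $\cL(L^2)$ convergence to $\Op_\hbar(\langle a\rangle)$, yielding \eqref{e:quantum_average}.

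For (i) and the $(\Leftarrow)$ direction of (ii), the starting point is that each $\cH_n$ (and each $H_j$) is a quadratic polynomial, so the Moyal bracket terminates at first order:
\[
[\Op_\hbar(c),\Op_\hbar(\cH_n)]=i\hbar\,\Op_\hbar(\{c,\cH_n\}),\quad \forall c\in S^0(\R^{2d}),
\]
with no higher-order corrections. A short computation based on the translation-invariance of the integral over $\T^{d_\omega}$ in the formula of Proposition \ref{p:av_prop} gives $\{\langle a\rangle,\cH_n\}=0$ for every $n$, so $\Op_\hbar(\langle a\rangle)$ commutes with every $\Op_\hbar(\cH_n)$. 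The commuting self-adjoint family $(\Op_\hbar(\cH_n))_{n=1}^{d_\omega}$ (resp.\ the full family $(\Op_\hbar(H_j))_{j=1}^d$) has discrete joint spectrum with one-dimensional joint eigenspaces spanned by tensor products of Hermite functions. The joint spectral theorem therefore represents $\Op_\hbar(\langle a\rangle)$ as $f_\hbar$ of these operators, and the $\cL(L^2)$-bound on $\Op_\hbar(\langle a\rangle)$ combined with a Borel-type smooth extension from the joint spectrum provides $f_\hbar\in S^0$. The $(\Leftarrow)$ direction of (ii) is then automatic: the same Poisson-bracket computation shows each $\Op_\hbar(\langle b\rangle)$ commutes with every $\Op_\hbar(\cH_n)$, hence with $f_\hbar(\Op_\hbar(\cH_1),\ldots,\Op_\hbar(\cH_{d_\omega}))$ by functional calculus.

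For the $(\Rightarrow)$ direction of (ii), I would extract the principal symbol of the commutator: vanishing of $[\Op_\hbar(\langle a\rangle),\Op_\hbar(\langle b\rangle)]$ for every $b\in S^0$ forces $\{\langle a\rangle,\langle b\rangle\}=0$, and since the averages $\langle b\rangle$ exhaust the $\phi_t^H$-invariant elements of the symbol class, Proposition \ref{p:av_prop}(ii) delivers $f\in\cC^\infty(\R^{d_\omega})$ with $\langle a\rangle=f\circ\cH$. Finally, the comparison \eqref{e:sym_compos} follows by matching eigenvalues in the joint Hermite basis: one has $f_\hbar(\Op_\hbar(\cH_1),\ldots,\Op_\hbar(\cH_{d_\omega}))=\Op_\hbar(\langle a\rangle)=\Op_\hbar(f\circ\cH)$ via the first part of the proposition, while the standard semiclassical expansion of the joint functional calculus of commuting quadratic operators around the classical symbol $f\circ\cH$ produces the displayed $O_{\cL(L^2)}(\hbar)$ remainder.

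The main obstacle I foresee is the first step: controlling uniformly in $T$ the Calderón--Vaillancourt seminorms of the partial time-averages and handling the Fourier tail of $a\circ\phi_t^H$ using the non-resonance property \eqref{e:v_nonresonant}, so that $\cC^\infty$ convergence of symbols actually implies strong operator convergence. A secondary difficulty is the smooth extension of $f_\hbar$ from the lattice-type joint spectrum of $(\widehat{\cH}_n)$ to $\R^{d_\omega}$, which requires checking that the eigenvalue sequence has bounded increments of all orders; this is read off from the $S^0$-calculus applied to $\Op_\hbar(\langle a\rangle)$ combined with the explicit structure of the joint spectrum.
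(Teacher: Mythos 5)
Your treatment of the strong convergence and formula \eqref{e:quantum_average} matches the paper's: exact Egorov for the quadratic $H$, the symbol-level limit from Proposition \ref{p:av_prop}, and Calder\'on--Vaillancourt. Part (i) and the $(\Leftarrow)$ direction of (ii) are also handled correctly, the key (and valid) observation being that translation-invariance of the Haar average over $\T^{d_\omega}$ gives $\{\langle a\rangle,\cH_n\}=0$ unconditionally, and the Moyal bracket with a quadratic symbol terminates at first order; since $f_\hbar(\Op_\hbar(\cH_1),\ldots)$ lies in the abelian von Neumann algebra generated by the $\Op_\hbar(\cH_n)$'s spectral projections, any operator commuting with all of them commutes with $f_\hbar(\Op_\hbar(\cH_1),\ldots)$.

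The genuine gap is in the $(\Rightarrow)$ direction of (ii). The assertion that "the commuting self-adjoint family $(\Op_\hbar(\cH_n))_{n=1}^{d_\omega}$ \ldots has discrete joint spectrum with one-dimensional joint eigenspaces" is false whenever $d_\omega<d$: the map $k\mapsto(\nu_1\cdot k,\ldots,\nu_{d_\omega}\cdot k)$ on $\Z_+^d$ is then not injective, the joint eigenspaces are degenerate, and commuting with $(\Op_\hbar(\cH_n))_{n=1}^{d_\omega}$ does not imply being a function of them. For $(\Rightarrow)$ you only extract the classical factorization $\langle a\rangle=f\circ\cH$ from Proposition \ref{p:av_prop}(ii); this gives $\langle\Op_\hbar(a)\rangle=\Op_\hbar(f\circ\cH)$, but the conclusion demands the \emph{exact} operator identity $\langle\Op_\hbar(a)\rangle=f_\hbar(\Op_\hbar(\cH_1),\ldots,\Op_\hbar(\cH_{d_\omega}))$ for some $f_\hbar$. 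To have that you must show $\Op_\hbar(f\circ\cH)$ is \emph{scalar} on every degenerate joint $(\cH_n)$-eigenspace, and your "eigenvalue matching" sentence presupposes exactly this, rather than proving it.

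The paper's route is different at this juncture and avoids the problem: from $\langle a\rangle=f\circ\cH$ and the quadraticity of the $H_j$ one gets the extra exact commutations $[\langle\Op_\hbar(a)\rangle,\Op_\hbar(H_j)]=0$ for every $j=1,\ldots,d$, i.e., with the \emph{full} Cartesian family, whose joint spectrum is simple (Hermite basis). This gives $\langle\Op_\hbar(a)\rangle=\tilde f_\hbar(\Op_\hbar(H_1),\ldots,\Op_\hbar(H_d))$ exactly, after which $f_\hbar:=\tilde f_\hbar\circ(\varrho^*)^{-1}$ is defined. You should replace the false simple-joint-spectrum claim for the subfamily with this step: prove commutation with the whole family $(\Op_\hbar(H_j))_j$, diagonalize there, and only then pass to the $\cH_n$-variables.
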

\begin{proof}
The existence of quantum averages and formula \eqref{e:quantum_average} follow from Proposition \ref{p:av_prop}, the Calderón-Vaillancourt theorem and Egorov's theorem, which is exact since $\widehat{H}_\hbar$ has quadratic symbol.

The non-trivial implication of (ii) is proved as follows. The hypothesis implies that 
\begin{equation}\label{e:commHj}
[\langle \Op_\hbar(a) \rangle,\Op_\hbar({H}_{j})]=0,\quad  j=1,\hdots,d.
\end{equation}
One has that the spectra $\Spec(\Op_\hbar({H}_{j}))=\hbar(\IZ_+ + 1/2)$ are simple. Joint eigenfunctions of $(\Op_\hbar({H}_{j}))_{j=1,\hdots,d}$ are precisely the Hermite functions, which form an orthonormal basis of $L^2(\R^d)$. This ensures that 
\[
\Spec(\langle \Op_\hbar(a) \rangle )=\left\{\tilde{f}_\hbar\left(\hbar k_1+\frac{\hbar}{2},\hdots, \hbar k_d+\frac{\hbar}{2} \right)\,:\, k\in\IZ^d_+\right\}, \text{ for some } \tilde{f}_\hbar\in\cC^\infty(\R^d).
\]
The function $f_\hbar$ defined from $\tilde{f}_\hbar$ by \eqref{e:fdeH} has the desired properties. Moreover, using the statement (ii) of Proposition \ref{p:av_prop} and the functional calculus for semiclassical pseudodifferential operators, we obtain \eqref{e:sym_compos} where $f$ is given by \eqref{e:function_classic}. Finally, the proof of (i) follows from (ii) and the fact that \eqref{e:commHj} holds in this case, because of \eqref{e:quantum_average}.
\end{proof}

\begin{proof}[Proof of Theorem \ref{t:invariance_for_quasimodes}]

Let $(\psi_\hbar)$ be a quasimode for $\widehat{P}_\hbar$ of width $(r_\hbar)=o(\epsilon_\hbar \hbar)$ with semiclassical measure equal to $\mu$.
Given $a \in \mathcal{C}_c^\infty(\R^{2d})$,  denote by $\widehat{A}_\hbar=\langle \Op_\hbar(a) \rangle$ the quantum average of $\Op_\hbar(a)$, as given by \eqref{e:quantum_average}. One has
\begin{equation}
\label{wigner_equation}
o(1) = \frac{i}{\varepsilon_\hbar\hbar} \left \langle  \psi_\hbar,  [\widehat{H}_\hbar + \varepsilon_\hbar \widehat{V}_\hbar,  \widehat{A}_\hbar  ]\psi_\hbar \right \rangle_{L^2(\R^d)}.
\end{equation}
Noting that
$$
[\widehat{H}_\hbar, \widehat{A}_\hbar] = \frac{\hbar}{i} \Op_\hbar( \{ H, \langle a \rangle \}) = 0,
$$
we can use the commutator rule for Weyl pseudodifferential operators to obtain
\begin{equation}
\label{conmutator-ruleG}
 \frac{i }{\varepsilon_\hbar\hbar}  [\widehat{H}_\hbar + \varepsilon_\hbar \widehat{V}_\hbar,  \widehat{A}_\hbar  ] = \Op_\hbar(\{ V, \langle a \rangle \}) + O(  \hbar^2).
\end{equation}
Inserting this identity in (\ref{wigner_equation}), letting $\hbar \rightarrow 0^+$ and using that $\mu$ is invariant by the flow $\phi_t^H$ we obtain that
\begin{equation}
\label{integral-wigner-equation}
0 =  \int_{\R^{2d}} \{ V, \langle a \rangle \}(x,\xi)\mu(dx,d\xi)=\int_{\R^{2d}} \{ \langle V \rangle, \langle a \rangle \}(x,\xi)\mu(dx,d\xi),\quad \forall a \in \mathcal{C}_c^\infty(\R^{2d}).
\end{equation}
Since $\langle a \circ \phi_{t}^{\langle V \rangle} \rangle = \langle a \rangle \circ \phi_{t}^{\langle V \rangle}$, we can use \eqref{integral-wigner-equation} and the invariance under the flow of the Harmonic Oscillator to conclude 
\begin{align*}
\frac{d}{dt} \int_{\R^{2d}}  a \circ \phi_{t}^{\langle V \rangle}  (x,\xi)\mu(dx,d\xi)&=\frac{d}{dt} \int_{\R^{2d}} \langle a \circ \phi_{t}^{\langle V \rangle} \rangle (x,\xi)\mu(dx,d\xi) \\ &= \int_{\R^{2d}} \{ \langle V \rangle, \langle a \rangle \}\circ \phi_{t}^{\langle V \rangle}(x,\xi)\mu(dx,d\xi) = 0.
\end{align*}
Therefore,
$$
\int_{\R^{2d}}  a(x,\xi)\mu(dx,d\xi) = \int_{\R^{2d}}  a \circ \phi_{t}^{\langle V \rangle}  (x,\xi)\mu(dx,d\xi), \quad \forall a \in \mathcal{C}_c^\infty(\R^{2d}).
$$
The density of $\mathcal{C}_c^\infty(\R^{2d})$ in $\mathcal{C}_c(\R^{2d})$ completes the proof the theorem. 
\end{proof}

As mentioned in the introduction, Theorem \ref{t:invariance_for_quasimodes} gives no additional information on the structure of semiclassical measures when the average $\langle V \rangle$ is a function of the periodic Hamiltonians $\cH_1,\hdots,\cH_{d_\omega}$. In this case, one can still get additional invariance for semiclassical measures associated to quasimodes of smaller width. The relevant flow is determined by the principal symbol of the first non-vanishing term in the quantum Birkhoff normal form described in Proposition \ref{averaging-lemma}.

Our next result is an example of such a situation. We have chosen to state it in lesser generality, in particular letting $\varepsilon_\hbar=\hbar^k$ with $k$ an integer, in order to keep the statement relatively simple. We refer the reader to \cite{Arnaiz_tesis} for further generalizations of this type of result, in particular to the case of the time-dependent Schrödinger equation.

\begin{teor}\label{t:general_invariance}
Suppose that $\varepsilon_\hbar=\hbar^k$ with $k\in\N$ and that $\omega$ is partially Diophantine. Suppose $j,N\in\N$ exist such that the terms in formula \eqref{quantum_normal_form} satisfy 
\[
\langle \widehat{R}_{i,\hbar} \rangle = f_i(\Op_\hbar(\cH_1),\hdots, \Op_\hbar(\cH_{d_\omega})), \quad f_i\in S^0(\R^{d_\omega}),\quad i=1,\hdots, j-1,
\]
and 
\[
\langle \widehat{R}_{j,\hbar} \rangle =\Op_\hbar(\langle L \rangle) + \hbar\Op_\hbar(\langle r_j \rangle)+ O_{\cL(L^2)}(\hbar^2)
\] 
such that $\langle L \rangle$ is not a function of $\cH_1,\hdots, \cH_{d_\omega}$.

Then any semiclassical measure $\mu$ associated to a quasimode $(\psi_\hbar)$ for $\widehat{P}_\hbar$ of width $o(\hbar^{kj+1})$ satisfies:
$$
\big( \phi_s^{\langle L \rangle} \circ \phi_t^H \big)_* \mu = \mu ,\quad \forall s,t \in \R.
$$
\end{teor}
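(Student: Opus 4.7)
The strategy is to first conjugate $\widehat{P}_\hbar$ into its quantum Birkhoff normal form and then repeat the commutator argument used in the proof of Theorem \ref{t:invariance_for_quasimodes}, but now at the finer scale $\hbar^{kj+1}$. Fix $N\in\mathbb{N}$ with $k(N+1)>kj+1$, and apply Proposition \ref{averaging-lemma} to obtain a unitary $U_\hbar$ such that
\[
\widetilde{P}_\hbar := U_\hbar^* \widehat{P}_\hbar U_\hbar \;=\; \widehat{H}_\hbar + \sum_{i=1}^{N}\hbar^{ki}\,\langle\widehat{R}_{i,\hbar}\rangle \;+\; O_{\mathcal{L}(L^2)}\bigl(\hbar^{k(N+1)}\bigr),
\]
with each $\langle\widehat{R}_{i,\hbar}\rangle$ commuting with $\widehat{H}_\hbar$. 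Because the generator of $U_\hbar$ is of order $\hbar^k$, the associated classical canonical transformation tends to the identity as $\hbar\to 0$, so an Egorov-type argument shows that $\widetilde{\psi}_\hbar := U_\hbar^*\psi_\hbar$ is again a quasimode for $\widetilde{P}_\hbar$ of width $o(\hbar^{kj+1})$ with the \emph{same} semiclassical measure $\mu$ as $\psi_\hbar$.

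For $a\in\mathcal{C}_c^\infty(\R^{2d})$, set $\widehat{A}_\hbar := \Op_\hbar(\langle a\rangle)$. The quasimode property gives
\[
\frac{i}{\hbar^{kj+1}}\bigl\langle\widetilde{\psi}_\hbar,\, [\widetilde{P}_\hbar,\widehat{A}_\hbar]\,\widetilde{\psi}_\hbar\bigr\rangle_{L^2(\R^d)} \;=\; o(1).
\]
I expand the commutator term by term. Since $\widehat{H}_\hbar$ has quadratic symbol, Egorov is exact and $\phi_t^H$-invariance of $\langle a\rangle$ forces $[\widehat{H}_\hbar,\widehat{A}_\hbar]=0$. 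The analogous argument applied to each quadratic $\mathcal{H}_m$, combined with the functional calculus and the fact that $\langle a\rangle$ is invariant under every $\phi_t^{\mathcal{H}_m}$, yields $[\widehat{A}_\hbar,f_i(\Op_\hbar(\mathcal{H}_1),\ldots,\Op_\hbar(\mathcal{H}_{d_\omega}))]=0$ for $i=1,\ldots,j-1$. For $i>j$ one has $[\hbar^{ki}\langle\widehat{R}_{i,\hbar}\rangle,\widehat{A}_\hbar]=O(\hbar^{ki+1})=o(\hbar^{kj+1})$, and by the choice of $N$ the normal-form remainder is $o(\hbar^{kj+1})$ as well. Hence only the $i=j$ term contributes, and the symbolic calculus gives
\[
\frac{i}{\hbar^{kj+1}}\bigl[\hbar^{kj}\langle\widehat{R}_{j,\hbar}\rangle,\widehat{A}_\hbar\bigr] \;=\; \Op_\hbar\!\bigl(\{\langle L\rangle,\langle a\rangle\}\bigr) + O(\hbar),
\]
the subprincipal term $\hbar\Op_\hbar(\langle r_j\rangle)$ entering only the $O(\hbar)$ correction.

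Passing to the semiclassical limit, one obtains $\int\{\langle L\rangle,\langle a\rangle\}\,d\mu=0$. Since $\langle L\rangle$ is itself $\phi_t^H$-invariant, its Poisson bracket commutes with averaging, so $\{\langle L\rangle,\langle a\rangle\}=\langle\{\langle L\rangle,a\}\rangle$. The $\phi_t^H$-invariance of $\mu$ then turns the identity into $\int\{\langle L\rangle,a\}\,d\mu=0$ for every $a\in\mathcal{C}_c^\infty(\R^{2d})$, which is equivalent to $(\phi_s^{\langle L\rangle})_*\mu=\mu$. Combined with the known invariance under $\phi_t^H$ and with the commutation of the two flows (both Hamiltonians being $\phi_t^H$-invariant), this delivers $(\phi_s^{\langle L\rangle}\circ\phi_t^H)_*\mu=\mu$.

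The main obstacle is to justify that conjugation by $U_\hbar$ preserves the semiclassical measure and that every error term stays strictly below $\hbar^{kj+1}$. The first point requires tracking the canonical transformation generated by $U_\hbar$, which is a genuine $\hbar^k$-small perturbation of the identity; the second is careful bookkeeping with the quantum Birkhoff normal form of Proposition \ref{averaging-lemma}, which is facilitated by the hypothesis $\varepsilon_\hbar=\hbar^k$ with integer $k$.
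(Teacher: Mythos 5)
Your argument is correct and follows essentially the same route as the paper: conjugate by the normal-form unitary $U_{N,\hbar}$, observe that $\langle\Op_\hbar(a)\rangle$ commutes with $\widehat H_\hbar$ and with any function of the $\Op_\hbar(\cH_m)$, isolate the $\hbar^{kj}\Op_\hbar(\langle L\rangle)$ term in the commutator, and let $\hbar\to0$. The only cosmetic difference is that you transfer the quasimode to the normal-form frame ($\widetilde\psi_\hbar=U^*_{N,\hbar}\psi_\hbar$) while the paper equivalently conjugates the observable ($\widehat A_\hbar=U_{N,\hbar}\langle\Op_\hbar(a)\rangle U^*_{N,\hbar}$) and then unwinds; both amount to the same computation, and the paper justifies the "same semiclassical measure" step precisely via the estimate \eqref{FIO-estimate} rather than the looser "Egorov-type" remark you give, which is the one point where you should cite that estimate explicitly. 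Your bookkeeping (requiring $k(N+1)>kj+1$ and gaining an $\hbar$ from each commutator with $\widehat A_\hbar$) is in fact slightly more careful than what is written in the paper's displayed equation \eqref{e:conj}.
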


\begin{remark}
If additionally, if $kj > 1 + \gamma(\omega)$ and $H^{-1}(1) = \mathcal{I}(H,\langle L \rangle)$, then it is possible to prove a converse for Theorem \ref{t:general_invariance}, see Corollary \ref{c:corol_general_invariance} 
\end{remark}

\begin{proof}
The hypotheses of the theorem and formula \eqref{quantum_normal_form} imply that:
\begin{align}\label{e:conj}
   U_{N,\hbar}^* \big( \widehat{H}_\hbar + \varepsilon_\hbar \widehat{V}_\hbar \big) U_{N,\hbar} & \\[0.2cm]
    & \hspace{-3cm} =  F_h(\Op_\hbar(\cH_1),\hdots, \Op_\hbar(\cH_{d_\omega}))+\hbar^{kj}\langle \Op_{\hbar}(L) \rangle + \hbar^{kj+1}\langle \Op_{\hbar}(r_{j}) \rangle+ O_{\mathcal{L}(L^2)}(\hbar^{kj+2}), \notag
\end{align}
where
\[
F_h(\Op_\hbar(\cH_1),\hdots, \Op_\hbar(\cH_{d_\omega})):=\widehat{H}_\hbar + \sum_{i=1}^{j-1}\hbar^{ki}f_i(\Op_\hbar(\cH_1),\hdots, \Op_\hbar(\cH_{d_\omega}))
\]
Define $\widehat{A}_\hbar:=U_{N,\hbar}\langle\Op_\hbar(a)\rangle U_{N,\hbar}^*$; then \eqref{e:conj} together with the fact that a quantum average commutes with every operator of the form $F_h(\Op_\hbar(\cH_1),\hdots, \Op_\hbar(\cH_{d_\omega}))$, gives
\begin{align*}
    o(1) 
    =& \frac{i}{\hbar^{kj+1}} \left \langle  \psi_\hbar,  [\widehat{H}_\hbar + \hbar^k \widehat{V}_\hbar,  \widehat{A}_\hbar]\psi_\hbar \right \rangle_{L^2(\R^d)}\\
    =&\frac{i}{\hbar^{kj+1}} \left \langle  U_{N,\hbar}^*\psi_\hbar,  [ \hbar^{kj} \langle \Op_\hbar(L)\rangle+\hbar^{kj+1} \langle \Op_\hbar(r_{j})\rangle,  \langle\Op_\hbar(a)\rangle]U_{N,\hbar}^*\psi_\hbar \right \rangle_{L^2(\R^d)}+O(\hbar).
\end{align*}
Using the symbolic calculus and the Calderón Vaillancourt theorem, we infer:
\begin{equation}\label{e:conj2}
    \left \langle  U_{N,\hbar}^*\psi_\hbar,  \Op_\hbar(\{\langle L \rangle,  \langle a \rangle\})U_{N,\hbar}^*\psi_\hbar \right \rangle_{L^2(\R^d)}=o(1).
\end{equation}
On the other hand, \eqref{FIO-estimate} implies that:
\begin{equation*}
    \big \langle  \psi_\hbar, \Op_\hbar(a)  \psi_\hbar \big \rangle_{L^2(\R^d)} = 
    \big \langle   U_{N,\hbar}^* \psi_\hbar  ,  \Op_\hbar(a) U_{N,\hbar}^* \,\psi_\hbar \big \rangle_{L^2(\R^d)} + O(\hbar^k). 
\end{equation*}
In particular $\mu$ is also the semiclassical measure of the sequence $(U_{N,\hbar}^* \,\psi_\hbar)$. Taking limits $\hbar\to 0^+$ in \eqref{e:conj2}, we deduce that:
\begin{equation*}
    \int_{\R^{2d}} \{ \langle L \rangle, \langle a \rangle \}(x,\xi)\mu(dx,d\xi)=0,\quad \forall a \in \cC^\infty_c(\R^{2d}).
\end{equation*}
From this point, one concludes following the same lines of the proof of Theorem \ref{t:invariance_for_quasimodes}.  
\end{proof}

\begin{remark}
\label{r:simpler_form}
In the case $\omega = (1,\ldots ,1)$, one has $\langle \widehat{R}_{2,\hbar} \rangle = \Op_\hbar(\langle L\rangle) + O(\hbar)$ where
\begin{equation}
\label{e:simpler_form}
\langle L \rangle = \frac{1}{4\pi} \int_0^{2\pi} \int_0^t \{ V \circ \phi_s^H , V \circ \phi_t^H \} ds \, dt.
\end{equation}
In general, as soon as $d_\omega < d$, it is not difficult to find examples of perturbations $V \in S^0(\R^{2d})$ for which $\langle V \rangle$ is constant but the principal symbol of $\langle \widehat{R}_{1,\hbar} \rangle$ is not a function of the periodic Hamiltonians $\cH_j$.
\end{remark}

\section{Spectral Separation and Classical Localization}\label{s:sep}

The Diophantine-type condition \eqref{e:partially_diophantine} is related to the speed of convergence of 
$$
\frac{1}{T} \int_0^T a \circ \phi_t^H dt
$$
to the average $\langle a \rangle$ as $T \to \infty$ and, as it is well known, this is crucial when dealing with the classic problem of \textit{small denominators} in KAM theory. 

On the quantum side, \eqref{e:partially_diophantine} implies that there exists a minimal spacing between consecutive eigenvalues of $\widehat{H}_\hbar$. The spectrum of $\widehat{H}_\hbar$ on $L^2(\R^d)$ is:
\begin{equation}
\label{e:spectrum}
\operatorname{Sp}\big( \widehat{H}_\hbar \big) = \left \{ \lambda_{k,\hbar}= \hbar \omega\cdot k+\frac{\hbar|\omega|_1}{2}\,:\, k \in\Z^d_+ \right \},
\end{equation}
and therefore, for every $\delta\in(0,1)$ there exists constants $C,\hbar_0>0$ such that, for every $0<\hbar<\hbar_0$,
\begin{equation}\label{e:spacing}
\min\{|\lambda_\hbar-\lambda'_\hbar| \,:\, \lambda_\hbar,\lambda'_\hbar\in\Spec(\widehat{H}_\hbar)\cap (1-\delta,1+\delta),\; \lambda_\hbar\neq\lambda'_\hbar \}\geq C\hbar^{1+\gamma(\omega)}.
\end{equation}
In particular, when the size of the perturbation $(\varepsilon_\hbar)$ is $o(\hbar^{1+\gamma(\omega)})$, the spectrum of $\widehat{P}_\hbar$ consists of clusters of eigenvalues centered around the eigenvalues of the unperturbed operator $\widehat{H}_\hbar$. In this case, $o(\hbar\eps_\hbar)$-quasimodes behave like eigenmodes of the unperturbed Hamiltonian $\widehat{H}_\hbar$:
\begin{propop}\label{p:localization} 
 Suppose that $\omega$ is partially Diophantine, that $(\varepsilon_\hbar)$ is $o(\hbar^{1+\gamma(\omega)})$ and that $(r_\hbar)$ is a sequence of non-negative real numbers that is $o(\eps_\hbar\hbar)$. Then $\mathcal{Q}(\widehat{P}_\hbar, r_\hbar) \subseteq \mathcal{M}(\widehat{H}_\hbar)$; in particular, as  consequence of the result of \cite{ArMa20}, given any $\mu\in\mathcal{Q}(\widehat{P}_\hbar, r_\hbar)$, there exists $\cE\in\Sigma_\cH$ such that 
\[
\supp\mu \subseteq \cH^{-1}(\cE). 
\]
\end{propop}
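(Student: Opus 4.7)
My plan is to prove that, up to an $L^2$-error of size $o(1)$, any quasimode $(\psi_\hbar)$ for $\widehat{P}_\hbar$ satisfying the hypotheses coincides with a bona fide eigenfunction of the unperturbed operator $\widehat{H}_\hbar$ corresponding to some eigenvalue $\tilde\lambda_\hbar \to 1$; once this is established, the conclusion $\mu \in \mathcal{M}(\widehat{H}_\hbar)$ is immediate and the localization $\supp\mu \subseteq \mathcal{H}^{-1}(\mathcal{E})$ follows directly from \eqref{e:smcho} of \cite{ArMa20}. To locate $\tilde\lambda_\hbar$, I would first rewrite the quasimode identity as
\[
(\widehat{H}_\hbar - \lambda_\hbar)\psi_\hbar = R_\hbar - \varepsilon_\hbar \widehat{V}_\hbar \psi_\hbar,
\]
whose right-hand side has $L^2$-norm bounded by $r_\hbar + \varepsilon_\hbar \|\widehat{V}_\hbar\|_{\mathcal{L}(L^2)} = O(\varepsilon_\hbar)$, thanks to the Calderón--Vaillancourt theorem and the uniform bound on the perturbation. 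Self-adjointness of $\widehat{H}_\hbar$ then furnishes $\tilde\lambda_\hbar \in \Spec(\widehat{H}_\hbar)$ with $|\lambda_\hbar - \tilde\lambda_\hbar| = O(\varepsilon_\hbar) = o(\hbar^{1+\gamma(\omega)})$; since $\lambda_\hbar \to 1$, so does $\tilde\lambda_\hbar$, so that the minimum-spacing bound \eqref{e:spacing} applies around $\tilde\lambda_\hbar$ for $\hbar$ small enough.

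The main step, and in my view the only genuinely delicate one, consists in isolating the eigenspace $\ker(\widehat{H}_\hbar - \tilde\lambda_\hbar)$. Denoting by $\Pi_\hbar$ the orthogonal projector onto it and using that $\Pi_\hbar$ commutes with $\widehat{H}_\hbar$, applying $I - \Pi_\hbar$ to the previous equation produces
\[
(\widehat{H}_\hbar - \lambda_\hbar)(I - \Pi_\hbar)\psi_\hbar = (I - \Pi_\hbar)\bigl(R_\hbar - \varepsilon_\hbar \widehat{V}_\hbar \psi_\hbar\bigr),
\]
and on $\operatorname{Ran}(I - \Pi_\hbar)$ the spacing estimate \eqref{e:spacing} together with $|\lambda_\hbar - \tilde\lambda_\hbar| = o(\hbar^{1+\gamma(\omega)})$ yields the resolvent bound $\|(\widehat{H}_\hbar - \lambda_\hbar)^{-1}(I - \Pi_\hbar)\|_{\mathcal{L}(L^2)} = O(\hbar^{-1-\gamma(\omega)})$. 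Inserting this gives
\[
\|(I - \Pi_\hbar)\psi_\hbar\|_{L^2} = O\bigl((\varepsilon_\hbar + r_\hbar)\hbar^{-1-\gamma(\omega)}\bigr) = o(1),
\]
where the last equality uses both hypotheses $\varepsilon_\hbar = o(\hbar^{1+\gamma(\omega)})$ and $r_\hbar = o(\varepsilon_\hbar \hbar)$.

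To wrap up I would set $\tilde\psi_\hbar := \Pi_\hbar \psi_\hbar / \|\Pi_\hbar \psi_\hbar\|_{L^2}$, well defined since $\|\Pi_\hbar \psi_\hbar\|_{L^2} = 1 - o(1)$. By construction $(\tilde\psi_\hbar)$ is a sequence of normalized eigenfunctions of $\widehat{H}_\hbar$ associated with eigenvalues $\tilde\lambda_\hbar \to 1$, and $\|\psi_\hbar - \tilde\psi_\hbar\|_{L^2} = o(1)$; Cauchy--Schwarz combined with Calderón--Vaillancourt then gives $\langle \psi_\hbar, \Op_\hbar(a)\psi_\hbar\rangle - \langle \tilde\psi_\hbar, \Op_\hbar(a)\tilde\psi_\hbar\rangle = o(1)$ for every $a \in \mathcal{C}_c^\infty(\R^{2d})$, so the two sequences share the same semiclassical measure $\mu$. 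Therefore $\mu \in \mathcal{M}(\widehat{H}_\hbar)$, and the support statement is then just \eqref{e:smcho}.
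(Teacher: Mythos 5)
Your proof is correct. It takes a more direct route than the paper's: the paper first conjugates $\widehat{P}_\hbar$ by the first-order normal form $U_{1,\hbar}$ (Proposition \ref{averaging-lemma}) so as to land on an operator whose perturbation $\varepsilon_\hbar\langle\widehat{V}_\hbar\rangle + O(\varepsilon_\hbar^2)$ commutes with $\widehat{H}_\hbar$, and then invokes Lemma \ref{l:projection_lemma}, which has that commutation built in as a hypothesis. You observe, correctly, that the spectral-gap part of that argument --- locating a single eigenvalue $\tilde\lambda_\hbar$ of $\widehat{H}_\hbar$ at distance $O(\varepsilon_\hbar)$ from $\lambda_\hbar$, and bounding $\|(\widehat{H}_\hbar - \lambda_\hbar)^{-1}(I - \Pi_\hbar)\| = O(\hbar^{-1-\gamma(\omega)})$ via \eqref{e:spacing} --- uses no commutation at all, so you can project $\psi_\hbar$ directly onto $\ker(\widehat{H}_\hbar - \tilde\lambda_\hbar)$ without the normal-form detour. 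The estimate $\|(I-\Pi_\hbar)\psi_\hbar\| = O\bigl((\varepsilon_\hbar + r_\hbar)\hbar^{-1-\gamma(\omega)}\bigr) = o(1)$ is right (and in fact only uses $r_\hbar = O(\varepsilon_\hbar)$, weaker than the stated $r_\hbar = o(\varepsilon_\hbar\hbar)$). What the paper's longer route buys is modularity: Lemma \ref{l:projection_lemma} also delivers that the limiting eigenfunction $\Psi_\hbar$ is a quasimode of $\langle\widehat{V}_\hbar\rangle$ of width $o(\hbar)$, and that stronger conclusion --- which genuinely needs $[\widehat{H}_\hbar,\widehat{L}_\hbar]=0$ and hence the conjugation --- is exactly what is reused in the proof of Theorem \ref{t:improved_quasimodes}. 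One small point you glossed over: the spacing bound \eqref{e:spacing} only controls eigenvalues inside $(1-\delta,1+\delta)$, so to get the global resolvent bound on $\operatorname{Ran}(I-\Pi_\hbar)$ you should note that eigenvalues outside that window are at distance at least a fixed constant from $\lambda_\hbar$; this is routine but worth stating.
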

This result follows from the fact for this size of perturbation, quasimodes of sufficiently small width are close to eigenfunctions of the unperturbed Hamiltonian.
\begin{lemma}
\label{l:projection_lemma}
Suppose that $\omega$ is partially Diophantine and that $\varepsilon_\hbar=o(h^{1+\gamma(\omega)})$. Let $(\widehat{L}_\hbar)$ be a uniformly bounded family of self-adjoint operators on $L^2(\IR^d)$ such that $[\widehat{H}_\hbar, \widehat{L}_\hbar]=0$.  If 
\begin{equation}
\label{e:quasimode_equation_star}
(\widehat{H}_\hbar+\eps_\hbar \widehat{L}_\hbar)\psi_\hbar= \lambda_\hbar \psi_\hbar+ o(\varepsilon_\hbar\hbar),\quad   \| \psi_\hbar \|_{L^2(\R^d)}=1,\quad \lim_{\hbar\to 0^+}\lambda_\hbar =  1,
\end{equation}
then there exists a sequence $(\Psi_\hbar)$ of normalized eigenvectors for $\widehat{H}_\hbar$ with eigenvalues $\Lambda_\hbar$ such that
\[
\psi_\hbar = \Psi_\hbar + o(1),\quad  \lambda_\hbar= \Lambda_\hbar+O(\eps_\hbar),
\]
and $(\Psi_\hbar)$ is a quasimode of width $o(\hbar)$ of $\widehat{L}_\hbar$ with quasieigenvalue $\eps_\hbar^{-1}(\lambda_\hbar-\Lambda_\hbar)$.
\end{lemma}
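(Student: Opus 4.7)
The plan is to use the spectral gap \eqref{e:spacing} implied by the partial Diophantine condition to show that $\psi_\hbar$ is essentially concentrated in a single eigenspace of $\widehat{H}_\hbar$, and then to read off the lemma's conclusions by projecting the quasimode equation \eqref{e:quasimode_equation_star} onto that eigenspace.

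\textbf{Step 1: Selection of $\Lambda_\hbar$.} Since $\widehat{L}_\hbar$ is uniformly bounded and self-adjoint, the operator $\widehat{H}_\hbar+\varepsilon_\hbar \widehat{L}_\hbar$ is self-adjoint, so the quasimode relation \eqref{e:quasimode_equation_star} implies $\operatorname{dist}\bigl(\lambda_\hbar,\operatorname{Sp}(\widehat{H}_\hbar+\varepsilon_\hbar\widehat{L}_\hbar)\bigr)=o(\varepsilon_\hbar\hbar)$. Combined with Weyl's inequality and $\|\varepsilon_\hbar \widehat L_\hbar\|=O(\varepsilon_\hbar)$, this yields $\operatorname{dist}(\lambda_\hbar,\operatorname{Sp}(\widehat H_\hbar))=O(\varepsilon_\hbar)$. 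Select $\Lambda_\hbar\in\operatorname{Sp}(\widehat{H}_\hbar)$ realizing this distance, so $|\lambda_\hbar-\Lambda_\hbar|=O(\varepsilon_\hbar)$ and $\Lambda_\hbar\to 1$.

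\textbf{Step 2: Concentration on the eigenspace of $\Lambda_\hbar$.} Let $\Pi_\hbar$ denote the spectral projector of $\widehat{H}_\hbar$ onto the eigenspace associated with $\Lambda_\hbar$, and set $u_\hbar:=(I-\Pi_\hbar)\psi_\hbar$. Because $[\widehat{H}_\hbar,\widehat{L}_\hbar]=0$, the projection $\Pi_\hbar$ commutes with both $\widehat{H}_\hbar$ and $\widehat L_\hbar$, so applying $I-\Pi_\hbar$ to \eqref{e:quasimode_equation_star} gives
\[
(\widehat H_\hbar+\varepsilon_\hbar\widehat L_\hbar-\lambda_\hbar)u_\hbar=(I-\Pi_\hbar)R_\hbar,\qquad \|R_\hbar\|_{L^2}=o(\varepsilon_\hbar\hbar).
\]
Using the functional calculus of $\widehat{H}_\hbar$ and the gap \eqref{e:spacing}, every eigenvalue $\lambda'\in\operatorname{Sp}(\widehat{H}_\hbar)$ contributing to $u_\hbar$ satisfies $|\lambda'-\Lambda_\hbar|\geq C\hbar^{1+\gamma(\omega)}$ (away from $\Lambda_\hbar$ inside the fixed window around $1$) or is bounded away from $1$, hence $|\lambda'-\lambda_\hbar|\geq c\hbar^{1+\gamma(\omega)}$ for $\hbar$ small. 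This gives $\|(\widehat H_\hbar-\lambda_\hbar)u_\hbar\|\geq c\hbar^{1+\gamma(\omega)}\|u_\hbar\|$, and after absorbing the $\varepsilon_\hbar\widehat L_\hbar$ term (which is $o(\hbar^{1+\gamma(\omega)})$ by hypothesis) one obtains
\[
\tfrac{c}{2}\,\hbar^{1+\gamma(\omega)}\|u_\hbar\|\leq o(\varepsilon_\hbar\hbar),\qquad\text{hence}\qquad \|u_\hbar\|=o\bigl(\varepsilon_\hbar/\hbar^{\gamma(\omega)}\bigr)=o(\hbar),
\]
since $\varepsilon_\hbar=o(\hbar^{1+\gamma(\omega)})$. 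In particular $\|\Pi_\hbar\psi_\hbar\|=1+o(\hbar^2)$.

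\textbf{Step 3: Definition of $\Psi_\hbar$ and verification.} Set $\Psi_\hbar:=\Pi_\hbar\psi_\hbar/\|\Pi_\hbar\psi_\hbar\|$. This is a normalized eigenvector of $\widehat{H}_\hbar$ with eigenvalue $\Lambda_\hbar$, and the estimate of Step 2 gives $\psi_\hbar=\Psi_\hbar+o(\hbar)=\Psi_\hbar+o(1)$. Applying $\Pi_\hbar$ to \eqref{e:quasimode_equation_star} and using $\widehat H_\hbar\Pi_\hbar=\Lambda_\hbar\Pi_\hbar$ and $[\Pi_\hbar,\widehat L_\hbar]=0$ produces
\[
\varepsilon_\hbar\widehat L_\hbar(\Pi_\hbar\psi_\hbar)=(\lambda_\hbar-\Lambda_\hbar)\Pi_\hbar\psi_\hbar+\Pi_\hbar R_\hbar.
\]
Dividing by $\varepsilon_\hbar\|\Pi_\hbar\psi_\hbar\|=\varepsilon_\hbar(1+o(1))$ yields
\[
\widehat L_\hbar\Psi_\hbar=\varepsilon_\hbar^{-1}(\lambda_\hbar-\Lambda_\hbar)\Psi_\hbar+o(\hbar),
\]
which is exactly the quasimode property for $\widehat L_\hbar$ with quasi-eigenvalue $\varepsilon_\hbar^{-1}(\lambda_\hbar-\Lambda_\hbar)$ (a bounded sequence by Step~1).

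The main obstacle is the norm estimate in Step~2: one has to exploit the spectral gap quantitatively and invoke the commutation $[\widehat H_\hbar,\widehat L_\hbar]=0$ to propagate the projection through the perturbation, so that the small denominator $\hbar^{1+\gamma(\omega)}$ really wins against $\varepsilon_\hbar$. Once this is achieved, the remaining steps are algebraic manipulations.
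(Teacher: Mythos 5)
Your proof is correct and follows essentially the same strategy as the paper: use the partial Diophantine spectral gap \eqref{e:spacing} to isolate a single eigenvalue $\Lambda_\hbar$ of $\widehat H_\hbar$, project onto its eigenspace, and exploit the commutation $[\widehat H_\hbar,\widehat L_\hbar]=0$ to pass the quasimode relation through the projector. The only technical difference is cosmetic: the paper introduces an intermediate scale $\delta_\hbar$ with $\varepsilon_\hbar\ll\delta_\hbar\ll\hbar^{1+\gamma(\omega)}$, projects onto the spectral window $[\lambda_\hbar-\delta_\hbar,\lambda_\hbar+\delta_\hbar]$, and bounds $\|(I-\Pi_\hbar)\psi_\hbar\|$ via the resolvent estimate $\|(\widehat H_\hbar-\lambda_\hbar)^{-1}(I-\Pi_\hbar)\|\leq\delta_\hbar^{-1}$ applied to the $O(\varepsilon_\hbar)$-quasimode relation for $\widehat H_\hbar$ alone (after moving $\varepsilon_\hbar\widehat L_\hbar$ into the remainder), obtaining $\|(I-\Pi_\hbar)\psi_\hbar\|=O(\varepsilon_\hbar/\delta_\hbar)=o(1)$. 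You instead project directly onto the eigenspace of $\Lambda_\hbar$ and keep $\varepsilon_\hbar\widehat L_\hbar$ on the left-hand side, absorbing it by the commutation before invoking the gap; this gives the marginally sharper bound $\|(I-\Pi_\hbar)\psi_\hbar\|=o(\hbar)$, which is more than the lemma needs but costs nothing. Both routes are valid, produce the same $\Psi_\hbar$, and the concluding algebraic steps coincide.
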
 
\begin{proof}
By hypothesis, $(\psi_\hbar)$ is a quasimode of $\widehat{H}_\hbar$ of width $O(\varepsilon_\hbar)$:
\begin{equation}\label{e:hqm}
\| (\widehat{H}_\hbar - \lambda_\hbar) \psi_\hbar \|_{L^2(\R^d)} = O(\varepsilon_\hbar).
\end{equation}
Let $(\delta_\hbar)$ be a sequence of positive real numbers that is $o(\hbar^{1+\gamma{(\omega})})$ and such that $\lim_{\hbar\to 0^+}\eps_\hbar/\delta_\hbar=0$. Let $\Pi_\hbar$ denote the spectral projector of $\widehat{H}_\hbar$ on the interval  $I_\hbar:= [ \lambda_\hbar-\delta_\hbar,\lambda_\hbar+\delta_\hbar]$. Then, using \eqref{e:spacing}, we have
\begin{align*}
\|\psi_\hbar - \Pi_\hbar\psi_\hbar\|_{L^2(\R^d)} 
\leq \Vert (\widehat{H}_\hbar - \lambda_\hbar)^{-1}( \operatorname{Id} - \Pi_\hbar) \Vert_{\mathcal{L}(L^2)} \Vert (\widehat{H}_\hbar - \lambda_\hbar) \psi_\hbar \Vert_{L^2(\R^d)} = O(\varepsilon_\hbar\delta_\hbar^{-1}).
\end{align*}
The lower bound on the eigenvalue spacing \eqref{e:spacing} ensures that there exists a unique $\Lambda_\hbar\in\Spec(\widehat{H}_\hbar)\cap I_\hbar$, which must also satisfy $|\Lambda_\hbar-\lambda_\hbar|=O(\eps_\hbar)$, by \eqref{e:hqm}.

As a conclusion, $\Psi_\hbar:= \| \Pi_\hbar\psi_\hbar \|_{L^2(\R^d)}^{-1} \Pi_\hbar\psi_\hbar$ form a sequence of normalized eigenfunctions with eigenvalues $\Lambda_\hbar$ such that:
\[
\|\psi_\hbar - \Psi_\hbar\|_{L^2(\R^d)}=o(1).
\]
Applying $\Pi_\hbar$ to both sides of \eqref{e:quasimode_equation_star} and using that $[\Pi_\hbar,\widehat{L}_\hbar]=0$, we deduce:
\[
\Lambda_\hbar \Psi_\hbar +\varepsilon_\hbar \widehat{L}_\hbar\Psi_\hbar = \lambda_\hbar \Psi_\hbar +o(\varepsilon_\hbar\hbar).
\]
In other words:
\[
\widehat{L}_\hbar\Psi_\hbar= \frac{\lambda_\hbar - \Lambda_\hbar}{\varepsilon_\hbar}\Psi_\hbar+o(\hbar).
\]
\end{proof}

\begin{proof}[Proof of Proposition \ref{p:localization}]
Since, by Proposition \ref{averaging-lemma},
\[
\widehat{P}_{1,\hbar}:=U_{1,\hbar}^*\widehat{P}_\hbar U_{1,\hbar}=\widehat{H}_\hbar +  \langle \widehat{V}_{\hbar} \rangle + O_{\mathcal{L}(L^2)}(\varepsilon_\hbar^{2}),
\]
it follows that when $(\psi_\hbar)$ is a quasimode for $\widehat{P}_\hbar$ of width $r_\hbar=o(\varepsilon_\hbar\hbar)$ then $(U_{1,\hbar}^*\psi_\hbar)$ is a quasimode for $\widehat{P}_{1,\hbar}$ of width $U_{1,\hbar}^*r_\hbar=o(\varepsilon_\hbar\hbar)$. Lemma \ref{l:projection_lemma} ensures the existence of a sequence $(\Psi_\hbar)$ of normalized eigenvectors for $\widehat{H}_\hbar$ with eigenvalues $\Lambda_\hbar\to 1$ as $\hbar\to 0^+$ such that
$U_{1,\hbar}^*\psi_\hbar - \Psi_\hbar \to 0$ in $L^2(\R^d)$. Identity \eqref{FIO-estimate} implies that $(U_{1,\hbar}^*\psi_\hbar)$ and $(\psi_\hbar)$ have the same semiclassical measure, and the result follows from Theorem 1 in \cite{ArMa20}.
\end{proof}
We now have all the ingredients needed to prove Theorem \ref{t:quasimodes_2}.
\begin{proof}[Proof of Theorem \ref{t:quasimodes_2}]
Let $(\Psi_\hbar)$ be a sequence of normalized eigenfunctions for $\widehat{H}_\hbar$ with eigenvalues $\Lambda_\hbar\to 1$ as $\hbar\to 0^+$ and whose semiclassical measure $\mu$ is the Haar measure on a torus $\mathcal{T}_E$ for some $E\in\R^d_+$ with $|E|_1=1$. Given $N > 1 + \alpha$, we consider the unitary operator $U_{N,\hbar}$ given by Proposition \ref{averaging-lemma} to construct the quantum normal form of $\widehat{P}_\hbar$. Defining $\psi_\hbar:= U_{N,\hbar} \Psi_\hbar$ and using \eqref{FIO-estimate}, one has that the semiclassical measure associated to the sequence $(\psi_\hbar)$ is $\mu$. On the other hand, using the quantum normal form \eqref{quantum_normal_form}, we obtain the existence of a sequence  $(\lambda_\hbar)$ of quasi-eigenvalues tending to one as $\hbar\to 0 ^+$ such that
$$
\widehat{P}_\hbar \psi_\hbar = \lambda_\hbar \psi_\hbar + O(\varepsilon_\hbar^{N+1}).
$$
The quasi-eigenvalues $\lambda_\hbar$ are obtained as follows: the non-resonant condition on $\omega$ implies that the spectrum \eqref{e:spectrum} is simple. Therefore $\Psi_\hbar$ is the eigenfunction of $\widehat{H}_\hbar$ corresponding to the eigenvalue 
\[
\Lambda_\hbar=\hbar \omega\cdot k^\hbar+\frac{\hbar|\omega|_1}{2}
\]
for some $k^\hbar = (k_1^\hbar, \ldots, k_d^\hbar) \in \mathbb{N}^d$ and is also an eigenfunction of all the operators appearing in the quantum Birkhoff normal form \eqref{quantum_normal_form}. Then
$$
\lambda_\hbar = \Lambda_\hbar + \sum_{j=1}^N \varepsilon_\hbar^j  f_{j,\hbar}  \big(\hbar k_1^\hbar + \hbar/2, \ldots, \hbar k_d^\hbar + \hbar/2\big),
$$ 
for certain smooth functions $f_{j,\hbar}$.
This proves the first statement. The second statement is a direct consequence of of this together with Proposition \ref{p:localization}.
\end{proof}

\section{Propagation of coherent and excited states by the multiflow}\label{s:cs}

In this section we revisit and adapt the results on propagation of coherent and excited states of \cite{Robert97} (see also \cite[Chpts. 3 and 4]{Robert12}) to the case of a family of commuting quantum propagators. This will be used in Section \ref{s:qm} to construct quasimodes concentrating on orbits issued from any given point $z_0 \in H^{-1}(1)$ by the multiflow associated to that family of propagators. One heavily uses the fact that the propagation of a coherent or excited state by a time-dependent quadratic Hamiltonian, appearing as the Taylor approximation of degree two of the full Hamiltonian around the classical multiorbit, is determined solely in terms of the associated classic flow (governing the translation of the state) and of the linearized dynamical system via a metaplectic operator (governing its metric) together with a phase factor given by the action of the Hamiltonian flow. Moreover, one can approximate the evolution of the full quantum propagator using Duhamel's principle and estimating the coefficients of the solution in an orthonormal basis of excited states. 

Let $L\in S^0(T^*\R^d)$, denote by $\phi_s^{\langle L \rangle}$ the Hamiltonian flow associated to $\langle L \rangle$ and define its quantization $\widehat{L}_h := \Op_\hbar(L)$. Our goal is to give the necessary elements of the proof of Theorem \ref{t:propagator_theorem} which gives a semiclassical approximation of the evolution of a coherent state centered at $z_0 \in H^{-1}(1)$ by the quantum multiflow $e^{-it \widehat{H}_\hbar/\hbar }e^{-is \langle \widehat{L}_\hbar\rangle /\hbar}$.

\subsection{Coherent and excited states}
Let us consider the orthonormal basis of $L^2(\R^d)$ given by the family of normalized semiclassical Hermite functions
\begin{equation}
\label{e:hermite_function}
\Psi_0^\hbar(x) := \frac{1}{( \pi \hbar)^{d/4}} e^{-\frac{ \vert x \vert^2}{2\hbar}}; \quad \Psi_\nu^\hbar(x) = \frac{1}{\sqrt{ 2^{\vert \nu \vert} \nu!}} q_\nu\left( \frac{x}{\sqrt{\hbar}} \right) \Psi_0^\hbar(x), \quad \nu \in \mathbb{Z}^d_+,
\end{equation} 
where $q_\nu$ are Hermite polynomials defined by the three term relations:
$$
q_0(x) = 1, \quad (q_{\nu + e_j}(x))_{j=1}^d = x q_\nu(x) - e_j \cdot \operatorname{Id} \nabla q_\nu(x), \quad j= 1, \ldots, d.
$$
The set of Hermite functions $(\Psi_\nu^\hbar)$ gives an orthonormal basis of $L^2(\R^d)$ consisting of eigenfunctions for the Hamornic oscillator $\widehat{H}_\hbar$ with eigenvalues $\{\lambda_{\nu,\hbar} \, : \, \nu \in \mathbb{Z}^d_+  \}$ defined by \eqref{e:spectrum}. 

One can translate these states to a general point $z_0 = (x_0,\xi_0) \in \R^{2d}$ via the Weyl-Heisenberg translation operator $\widehat{T}(z_0)$ defined for any $\psi \in L^2(\R^d)$ by: 
$$
\widehat{T}(z_0) \psi(x) := \exp\left( - \frac{i}{2\hbar} x_0 \cdot \xi_0 \right) \exp \left( \frac{i}{\hbar} x \cdot \xi_0 \right) \psi(x-x_0), \quad \psi \in L^2(\R).
$$
The operator $\widehat{T}(z_0)$ is unitary and then the family of translated states 
$$
\Psi_{z_0,\nu}^\hbar := \widehat{T}(z_0)\Psi_{\nu}^\hbar, \quad \nu \in \mathbb{Z}_+^d,
$$
called coherent ($\nu = 0$) and excited ($\vert \nu \vert \geq 1$) states centered at $z_0$, is still an orthonormal basis of $L^2(\R^d)$. We denote for simplicity $\Psi_{z_0}^\hbar := \Psi_{z_0,0}^\hbar$. In the sequel, we study the propagation of the coherent state $\Psi_{z_0}^\hbar$ by the quantum flow $e^{-it \widehat{H}_\hbar/\hbar }e^{-is \langle \widehat{L}_\hbar\rangle /\hbar}$.

\subsection{Classical dynamical quantities}
\label{ss:classical_dynamical_quantities}
This section is devoted to introduce some classical quantities that will be used to describe the propagation along the minimal tori issued from a point $z_0 \in H^{-1}(1)$ by the flow $\phi_t^H$. It is convenient to introduce a new parametrization of the classical multiflow in order to simplify the analysis when dealing with those points satisfying $\mathbb{M}(z_0)\in\Sigma$. Let $E \in \R^{d}_+$ and put $d_E=d_{\pi_{E}(\omega)}$, where $\pi_E$ is given by \eqref{e:projection_degeneracies}.
Define $\ell_{E}:\{1,\hdots,d_E\}\To\{1,\hdots,d\}$ such that that the vectors $\pi_{E}(\nu_{\ell_{E}(1)}), \ldots, \pi_{E}(\nu_{\ell_{E}(d_E)})$ are linearly independent. 
We define $\widetilde{\cH}^{E} = ( \tilde{\mathcal{H}}_1^{E}, \ldots,  \tilde{\mathcal{H}}_{d_E}^{E})$ and $\tilde{v}_E\in\R^{d_E}$ by
\begin{equation}\label{e:hvtilde}
\tilde{\mathcal{H}}_j^{E}(z) := \frac{1}{K_{E,j}} \pi_{E}(\nu_{\ell_{E}(j)}) \cdot\mathbb{M}(z), \quad \widetilde{v}_{E,j} = K_{E,j} \left( v_{\ell_{E}(j)} + \sum_{n\in I_{E}} b_{n,j} v_n \right), \quad j = 1, \ldots, d_E,
\end{equation}
where $K_{E,j}$ is the greatest common divisor of the components of the vector $\pi_E(\nu_{\ell_E(j)})$ (which in general is not primitive), $I_{E}:=\ell_{E}(\{1,\hdots,d_E\})^c $, and the coefficients $b_{n,j} \in \mathbb{Q}$ satisfy
$$
\pi_{E}(\nu_n) = \sum_{j=1}^{d_E} b_{n,j} \pi_{E}(\nu_{\ell_{E}(j)}), \quad n \in I_{E}.
$$
Let us also define the flow
\begin{equation}
\label{e:minimal_flow}
\Phi_\tau^{\widetilde{\mathcal{H}}^{E}} := \phi_{\tau_1}^{\widetilde{\mathcal{H}}^{E}_1}\circ\hdots\circ\phi_{\tau_{d_E}}^{\widetilde{\mathcal{H}}^{E}_{d_E}},\quad \tau\in\R^{d_E}.
\end{equation}
Note that when $E$ satisfies that $d_E=d_\omega$ one has $\Phi_\tau^{\widetilde{\mathcal{H}}^{E}}=\Phi_\tau^\cH$ and, in general, 
\[
\Phi_{\tau}^{\widetilde{\mathcal{H}}^{E}}=\Phi_{\tilde{\rho}_E(\tau)},\quad \forall\tau\in\R^{d_E},
\] 
where $\tilde{\rho}_E:\R^{d_E}\To \R^{d}$ is the linear map sending the vectors of the canonical basis to the vectors $\pi_E(\nu_{\ell_E(j)})$ with $j=1,\hdots,d_E$.
When $\mathbb{M}(z_0)=E$,
\[
\Phi_{t\tilde{v}_E}^{\widetilde{\mathcal{H}}^{E}}(z_0)=\Phi_{tv}^\cH(z_0)=\phi^H_t(z_0), \quad t \in \R.
\]
Let us consider the partial orbit $\mathcal{C}_{z_0}(T)$  issued from $z_0$ by the flows $\Phi_t^{\widetilde{\cH}^E}$ and $\phi_s^{\langle L \rangle}$, that is:
$$
\mathcal{C}_{z_0}(T) :=  \{ \phi_s^{\langle L \rangle} \circ \Phi^{\widetilde{\cH}^{E}}_\tau(z_0)\, : \, (\tau,s) \in \mathbb{T}^{d_E} \times [0,T] \},
$$
where $E = \mathbb{M}(z_0)$ and $\Phi^{\widetilde{\cH}^{E}}_\tau$ is defined by \eqref{e:minimal_flow}. Since  $H_1,\hdots, H_d$ generate a completely integrable system, the partial orbit  $\mathcal{C}_{z_0}(T)$ is an embedded submanifold of $H^{-1}(1)$ (possibly with boundary) and there exists a diffeomorphism $\Theta_{z_0,T} : \mathcal{C}_{z_0}(T) \to \mathbb{T}^{d_E} \times \Gamma_T$, where $\Gamma_T$ is either $[0,T]$, $\mathbb{T}^1$ or $\{ 0 \}$ such that
$$
\Theta_{z_0,T} \circ (\phi_s^{\langle L \rangle} \circ \phi_t^H ) \circ \Theta_{z_0,T}^{-1}(\tau,r) = (\tau + t \tilde{v}_E +  s v_1 , r + \rho s),\quad v_1\in \R^{d_E},\; \rho\in\R,
$$
where $\tilde{v}_E$ is defined in \eqref{e:hvtilde} and provided that $r+\rho s \in \Gamma_T$. Notice that if $\Gamma_T = \{ 0 \}$ then one has $\rho = 0$.  

Define the symplectic matrix
\[
f(t,s) := d \big( \phi_t^H \circ \phi_s^{\langle L \rangle}(z) \big) \vert_{z = z_0},
\]
and denote
\begin{align}
\theta(z_0,T) := \sup_{0 \leq s,s' \leq T}   \sup_{t,t' \in \R} \big[ \tr \big(f(t,s)^\textnormal{t}f(t',s')\big) \big]^{1/2}.
\end{align}
\begin{remark}
\label{r:discussion_Ehrenfest}
The quantity $\theta(z_0,T)$ is related to the \textit{Ehrenfest time} of the quantum propagator $e^{-it \widehat{H}_\hbar/\hbar }e^{-is \langle \widehat{L}_\hbar\rangle /\hbar}$. Given $0 < \epsilon < \frac{1}{2}$, we define: 
$$
\mathscr{T}_\hbar := \sup_{T > 0} \big \{ T \, : \, \hbar^{1/2} \theta(z_0,T) \leq \hbar^\epsilon \big \}.
$$
\begin{itemize} 
\item If the orbit issued from $z_0$ is stable, say $\theta(z_0,T) \leq C(1+ \vert T \vert)$, then:
\begin{align}
\label{e:ehrenfest_stable}
\mathscr{T}_\hbar = C \hbar^{\epsilon - \frac{1}{2}}.
\end{align}

\item If the orbit issued from $z_0$ is unstable, one has $\theta(z_0,T) \leq C e^{\lambda(z_0) T}$ for some $\lambda(z_0)>0$. Then
\begin{align}
\label{e:ehrenfest_unstable}
 \mathscr{T}_\hbar = \frac{1 - 2 \epsilon}{2\lambda(z_0)}  \log \left(\frac{1}{\hbar} \right).
\end{align}
\end{itemize}
\end{remark}
We also define $F_{z_0}(\tau,s) := d\big( \Phi_\tau^{\widetilde{H}^E} \circ \phi_s^{\langle L \rangle}(z) \big)\vert_{z = z_0} \in \operatorname{Sp}(2d,\R)$ for $(\tau,s) \in \mathbb{T}^{d_E} \times \R$. Clearly $f(t,s) = F_{z_0}(t \widetilde{v}_E, s)$. Moreover, $F_{z_0}(\tau,s)$ is  the solution of the following system of differential equations:
\begin{align*}
\frac{d}{ds} F_{z_0}(\tau,s) & = J \partial^2 \langle L \rangle F_{z_0}(\tau,s), \quad  \partial^2 \langle L \rangle = \left( \frac{ \partial^2 \langle L \rangle}{\partial z^2} \right) \Big \vert_{z = \phi_s^{\langle L \rangle} \circ \Phi^{\widetilde{\cH}^E}_\tau(z_0)}, \\[0.2cm]
\frac{d}{d\tau_j} F_{z_0}(\tau,s) & = J \partial^2\widetilde{\cH}^E_j F_{z_0}(\tau,s), \quad \, \partial^2 \widetilde{\cH}^E_j = \left( \frac{ \partial^2  \widetilde{\cH}^E_j }{\partial z^2} \right) \Big \vert_{z =  \phi_s^{\langle V \rangle} \circ \Phi^{\tilde{\cH}^E}_\tau(z_0)}, 
\end{align*} 
with $F_{z_0}(0,0) = \operatorname{Id}$ and $J$ being the standard symplectic matrix
$$
J = \left( \begin{array}{cc}
 0 & \operatorname{Id} \\[0.2cm]
 - \operatorname{Id} & 0 
 \end{array} \right).
$$
In particular, since $\{ \widetilde{\cH}_j^E, \langle L \rangle \} = 0$,
\begin{align*}
X_{\langle L \rangle}(\phi_s^{\langle L \rangle} \circ \Phi_\tau^{\widetilde{\cH}^E}(z_0)\big) & = F_{z_0}(\tau,s)X_{\langle L \rangle}(z_0), \\[0.2cm]
X_{\widetilde{\cH}^E_j}\big(\phi_s^{\langle L \rangle} \circ \Phi^{\widetilde{\cH}^E}_\tau(z_0)\big) & = F_{z_0}(\tau,s)X_{\widetilde{\cH}_j^E}(z_0).
\end{align*} 

As we will see in the next section, the symplectic matrix $F_{z_0}(\tau ,s)$ associated to the linearized problem governs the change of the metric of the coherent state $\Psi_{z_0}^\hbar$ when it is propagated by the quantum flow, via the action of a metaplectic operator $\widehat{F}_{z_0}(\tau,s)$ quantizing $F_{z_0}(\tau , s)$.

\subsection{Quantum propagation}

The symplectic matrix $F_{z_0}(\tau,s)$ can always be written as
$$
F_{z_0}(\tau,s) =  \left( \begin{array}{cc} \operatorname{Re} Q(\tau,s) & \operatorname{Im} Q(\tau,s) \\[0.2cm]
 \operatorname{Re} P(\tau,s) & \operatorname{Im} P(\tau,s) \end{array} \right),
 $$
for some complex $d\times d$ matrices $Q(\tau,s)$, $P(\tau,s)$ such that $B(\tau,s) = P(\tau,s)Q(\tau,s)^{-1}$ belongs to Siegel upper half-space. There exists a metaplectic operator $\widehat{F}_{z_0}(\tau,s)$ that acts on each $\Psi_\nu^\hbar$ (see \cite[(2.22)]{Robert97} and \cite[Remark 3.2, and Thm. 3.4]{Hagedorn98}), as
\begin{align}
\label{e:hermite_1}
\widehat{F}_{z_0}(\tau,s) \Psi_\nu^\hbar  = \frac{1}{\sqrt{2^{\vert \nu \vert} \nu!}} \frac{\det Q(\tau,s)^{-1/2}}{(\pi \hbar)^{d/4}} p^M_\nu \left( \frac{ Q(\tau,s)^{-1} x}{\sqrt{\hbar}}  \right)  \exp \left( \frac{i}{2\hbar} B(\tau,s) x \cdot x \right), \quad \nu \in \mathbb{Z}^d_+,
\end{align}
where $p^M_\nu$ are Hermite-type polynomials satisfying $p^M_\nu = q_\nu$ at the initial value $(\tau,s) = (0,0)$, where $M= \operatorname{Id}$, for the standard Hermite polyonomials $q_\nu$ given in \eqref{e:hermite_function}, and with coefficients defined by universal three-term recursion relations in terms of $F_{z_0}(\tau,s)$ (see \cite{Hagedorn85}, \cite{Hagedorn98}, and more precisely \cite[Prop. 4]{Dietert17}):
\begin{equation}
\label{e:general_hermite_polynomials}
p_0^M(x) = 1, \quad (p_{\nu + e_j}^M(x))_{j=1}^d = x p_\nu^M(x) - e_j \cdot M\nabla p_\nu^M(x), \quad j= 1, \ldots, d,
\end{equation}
where $M = M(\tau,s) = Q(\tau,s)^{-1} \overline{Q (\tau,s)}$. This operator plays a central role in the expression of the quantum propagation of coherent and excited states.

Consider now the quantum propagator associated to $\widetilde{\cH}^E$, where $\mathbb{M}(z_0)=E$:
\begin{equation}
\label{e:unitary_operator}
\mathbf{U}^E_\hbar(\tau,s) := 
 \exp \left( -\frac{i}{\hbar}  \left(\sum_{j=1}^{d_E}\tau_j   \Op_\hbar( \tilde{\mathcal{H}}^E_j)+s \langle \widehat{L}_\hbar \rangle  \right)\right).
\end{equation}
Applying \cite[Thm 3.1]{Robert97} successively for the propagators of the commuting operators $\Op_\hbar(\tilde{\mathcal{H}}^E)$ and  $\langle \widehat{L}_\hbar \rangle$, we get the following approximation of $\mathbf{U}_\hbar(\tau,s) \Psi_{z_0}^\hbar(x)$ in terms of a linear combination of excited coherent states:
\begin{teor}
\label{t:propagator_theorem}
Let $z_0\in H^{-1}(1)$ and $T>0$; write $E:=\mathbb{M}(z_0)$ and define
\begin{align*}
\psi_\nu^\hbar(\tau,s,x) &  := e^{\frac{i}{\hbar}(\gamma(\tau,s)-\sum_{j=1}^{d_E}\tau_j \tilde{\cH}^E_j(z_0))} \, \widehat{T}\big( \phi_s^{\langle L \rangle} \circ \Phi_\tau^{\widetilde{\cH}^E}(z_0) \big) \,\widehat{F}_{z_0}(\tau,s) \Psi_\nu^\hbar(x),
\end{align*}
where the Hermite functions $\Psi_\nu^\hbar$ are given by \eqref{e:hermite_function} and $\gamma(\tau,s)$ is the classical action 
\begin{equation}
\label{e:action_gamma}
\gamma(\tau,s) = -s\langle L \rangle(z_0) + \int_0^s \frac{\partial_r x(\tau,r) \cdot \xi(\tau,r) - x(\tau,r) \cdot \partial_r\xi(\tau,r)}{2} dr,
\end{equation}
where $z(\tau,s) := \big( x(\tau,s), \xi(\tau,s) \big) = \phi_s^{\langle L \rangle} \circ \Phi^{\tilde{\cH}^E}_\tau(z_0)$.

Then for every $l \geq 1$, 
\begin{equation}
\label{e:linear_combination_of_coherent_states}
\mathbf{U}^E_\hbar(\tau,s) \Psi_{z_0}^\hbar(x) = \sum_{\vert \nu \vert = 0}^{3(l-1)} c_\nu(\tau,s,\hbar) \psi_\nu^\hbar(\tau,s,x) + \mathcal{R}_l(x,z_0,\hbar,\tau,s),
\end{equation}
where the wave-front set of the remainder satisfies $\operatorname{WF}_\hbar \big( \mathcal{R}_l(\cdot,z_0,\tau,s,\hbar) \big) = \{ z(\tau,s) \} $ and
\begin{equation}
\label{e:reminder_estimate_ehrenfest}
\sup_{(\tau,s) \in  \mathbb{T}^{d_E} \times [0,T]} \Vert \mathcal{R}_l(\cdot,z_0,\hbar, \tau,s) \Vert_{L^2(\R^d)} \leq C_l \sum_{1 \leq j \leq l} \left( \frac{  T }{\hbar} \right)^j \big( \sqrt{\hbar} \theta(z_0,T) \big)^{2j + l}.
\end{equation}
The constants $c_\nu$  (see\footnote{Actually, estimate \cite[Eq. (3.7)]{Robert97} can be slightly precised to obtain:
$$
\left \vert c_\nu(\tau,s,\hbar) - c_\nu(0,0) \right \vert \leq \sum_{1 \leq p \leq l-1} \sum_{\substack{\vert \mu - \nu \vert \leq k_1 + \cdots + k_p \leq l + 2p - 1 \\ k_i \geq 3}} \vert a_{p,\mu,\nu}(\tau,s) \vert \vert c_{\mu}(0,0) \vert \hbar^{\frac{k_1+ \cdots + k_p}{2}-p},
$$
where the entries $a_{p,\mu,\nu}(\tau,s)$ are universal polynomials in $((\widetilde{\cH}^E_j)^{ (\gamma)}(z(\tau,s)), \langle L \rangle^{(\gamma')}(z(\tau,s)))$ for $\vert \gamma + \gamma' \vert \leq l+2$ and $a_{p,\mu,\nu}(0,0) = 0$.
In our case, $c_0(0,0) = 1$ and $c_\nu(0,0) = 0 $ for $\nu \neq 0$.} \cite[Eq. (3.7)]{Robert97}) satisfy $\vert c_0(\tau,s,\hbar) - 1 \vert \leq C_l \hbar^{1/2}$, and, for every $1 \leq \vert \nu \vert \leq 3(l-1)$ 
\begin{align}
\label{e:constant_estimate}
\sup_{(\tau, s) \in \mathbb{T}^{d_E} \times [0, T]} \vert c_\nu(\tau,s,\hbar) \vert & \leq  C_l \hbar^{\frac{\vert \nu \vert}{2}},
\end{align}
where $C_l$ does not depend on $T$.

\end{teor}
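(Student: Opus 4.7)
The plan is to apply the single-flow propagation result \cite[Thm.~3.1]{Robert97} successively to each of the commuting factors making up $\mathbf{U}^E_\hbar(\tau,s)$. Since $\{\tilde{\mathcal{H}}^E_j,\tilde{\mathcal{H}}^E_k\}=0$ and $\{\tilde{\mathcal{H}}^E_j,\langle L\rangle\}=0$ on the classical side, Egorov's theorem (exact for quadratic symbols, and up to $O(\hbar^2)$ for the averaged perturbation) ensures $[\Op_\hbar(\tilde{\mathcal{H}}^E_j),\Op_\hbar(\tilde{\mathcal{H}}^E_k)]=0$ and $[\Op_\hbar(\tilde{\mathcal{H}}^E_j),\langle\widehat{L}_\hbar\rangle]=0$, which allows the factorization
\[
\mathbf{U}^E_\hbar(\tau,s) = e^{-\frac{is}{\hbar}\langle\widehat{L}_\hbar\rangle}\,\prod_{j=1}^{d_E} e^{-\frac{i\tau_j}{\hbar}\Op_\hbar(\tilde{\mathcal{H}}^E_j)},
\]
taken in any order.

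\textbf{Iterated application of Robert's theorem.} First I would recall the content of \cite[Thm.~3.1]{Robert97}: for any $G\in\mathcal{C}^\infty(\R^{2d};\R)$ with at most quadratic growth at infinity (which is ensured here by replacing the $\tilde{\mathcal H}^E_j$ with smooth bounded modifications outside a neighborhood of $H^{-1}(1)$, and by bounded $\langle L\rangle\in S^0$), one has an expansion
\[
e^{-it\widehat{G}_\hbar/\hbar}\Psi_{z}^\hbar = e^{i\gamma_G(t,z)/\hbar}\,\widehat{T}(\phi^G_t(z))\,\widehat{F}_G(t,z)\!\!\sum_{|\nu|\leq 3(l-1)}\!\! c^G_\nu(t,\hbar)\Psi_\nu^\hbar + \mathcal{R}^G_l,
\]
with metaplectic factor $\widehat{F}_G$ quantizing the linearized flow, classical action $\gamma_G$, coefficients $c^G_\nu$ obtained from Duhamel's formula and satisfying $|c_\nu^G|\leq C\hbar^{|\nu|/2}$, and remainder estimated by the Ehrenfest-type product appearing in \eqref{e:reminder_estimate_ehrenfest}. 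I would then compose this for $e^{-i\tau_{d_E}\Op_\hbar(\tilde{\mathcal{H}}^E_{d_E})/\hbar}$ applied to $\Psi_{z_0}^\hbar$, then for the next factor applied to the resulting excited states centered at $\phi^{\tilde{\mathcal{H}}^E_{d_E}}_{\tau_{d_E}}(z_0)$, and so on through the $d_E$ factors and finally $e^{-is\langle\widehat{L}_\hbar\rangle/\hbar}$. At each step, use the covariance $\widehat{T}(z_1)\widehat{T}(z_2)=e^{\frac{i}{2\hbar}\sigma(z_1,z_2)}\widehat{T}(z_1+z_2)$ and the multiplicativity (up to sign) of the metaplectic representation $\widehat{F}_1\widehat{F}_2=\pm\widehat{F_1F_2}$ to collapse the composition into a single Weyl-Heisenberg translation at $z(\tau,s)$, a single metaplectic operator quantizing the composed symplectic matrix $F_{z_0}(\tau,s)$, and a single phase. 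A direct computation (using $\{\tilde{\mathcal{H}}^E_j,\langle L\rangle\}=0$ to rearrange the Hamilton--Jacobi terms) verifies that the accumulated phase equals $\hbar^{-1}(\gamma(\tau,s)-\sum_j\tau_j\tilde{\mathcal{H}}^E_j(z_0))$ with $\gamma$ as in \eqref{e:action_gamma}. Evaluation of $\widehat{F}_{z_0}(\tau,s)\Psi_\nu^\hbar$ through \eqref{e:hermite_1} produces the states $\psi_\nu^\hbar(\tau,s,x)$.

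\textbf{Coefficient and remainder estimates.} For the coefficients, at each application of Robert's theorem the three-term relations \eqref{e:general_hermite_polynomials} together with the Duhamel expansion turn a factor of $\sqrt{\hbar}$ into an increment of $|\nu|$ by one (the raising and lowering operators associated to $\Psi_\nu^\hbar$ carry a factor $\sqrt{\hbar}$); consequently $|c_\nu(\tau,s,\hbar)|\lesssim\hbar^{|\nu|/2}$ uniformly on $\mathbb{T}^{d_E}\times[0,T]$, with the initial data $c_\nu(0,0)=\delta_{\nu,0}$ giving $|c_0-1|=O(\hbar^{1/2})$. For the remainders, one combines the individual $L^2$ estimates from Robert's theorem using the triangle inequality and the fact that $\|\mathbf{U}^E_\hbar(\tau,s)\|_{\mathcal L(L^2)}=1$; the symplectic matrices produced along the way are all bounded by $\theta(z_0,T)$ by definition, and the constants $T$ in \cite[Eq.~(3.8)]{Robert97} are absorbed into the outer factor, yielding \eqref{e:reminder_estimate_ehrenfest}. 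The wave-front statement $\operatorname{WF}_\hbar(\mathcal{R}_l(\cdot,z_0,\hbar,\tau,s))=\{z(\tau,s)\}$ follows from the fact that the terms being subtracted in Duhamel's scheme are excited states centered at the same point $z(\tau,s)$, hence so is the remainder.

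\textbf{Main obstacle.} The technical heart of the argument is bookkeeping the phases and the metaplectic factors through the iterated composition, and verifying that the accumulated action coincides with \eqref{e:action_gamma} rather than some variant involving cross-terms. The commutation relations between the classical Hamiltonians make the differential identity $d_s\gamma+d_\tau(\ldots)$ reduce to the single Poincar\'e--Cartan form in the variable $s$, but confirming this cleanly (especially the cancellation of the $\sum_j\tau_j\tilde{\mathcal H}^E_j(z_0)$ subtraction) requires a careful inductive computation. Once this is in place, the coefficient and remainder bounds are direct quotations of Robert's estimates applied $d_E+1$ times.
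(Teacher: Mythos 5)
Your proposal follows exactly the same route as the paper, whose entire ``proof'' of this theorem is the single sentence preceding the statement: apply \cite[Thm.~3.1]{Robert97} successively to the commuting propagators of $\Op_\hbar(\tilde{\cH}^E_j)$ and $\langle\widehat{L}_\hbar\rangle$, then collect phases, metaplectic factors, coefficients, and remainders. Your elaboration of the bookkeeping is consistent with that approach (note only that since the $\tilde{\cH}^E_j$ are exactly quadratic, their propagators act on coherent and excited states exactly via the metaplectic representation, so the truncation-outside-a-neighborhood device you mention is unnecessary for those factors).
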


\section{Construction of quasimodes}\label{s:qm}

In this section we present a construction of quasimodes of width $O(\varepsilon_\hbar \hbar)$ for the operator 
\[
\widehat{H}_\hbar + \delta_\hbar \langle \widehat{L}_\hbar \rangle,\quad \delta_\hbar\to 0,\quad \widehat{L}_\hbar = \Op_\hbar(L), \quad L\in S^0(T^*\R^d).
\]
We will use propagation of coherent-states and the fact that $\widehat{H}_\hbar$ and $\widehat{H}_\hbar + \delta_\hbar \langle \widehat{L}_\hbar \rangle$ commute to construct, for any given $z_0 \in H^{-1}(1)$, a quasimode with semiclassical measure given by the average of $\mu_{\omega, z_0}$ by the flow $\phi_s^{\langle L \rangle}$ in a finite interval $s \in [0,T]$. 
\begin{lemma}[Concentration on a minimal set]
\label{concentration_minimal_set_2}
For every $\cE\in\Sigma_\cH$
there exits $\hbar_0>0$ and  a sequence $\lambda_\hbar \to 1$ such that for every $z_0 \in \cH^{-1}(\cE)$, $\chi \in \mathcal{C}_c^\infty((0,1),[0,1])$ there exists a family $(\psi_\hbar^T) \subset L^2(\R^d)$, parametrized by $T \in (0,\mathscr{T}_\hbar)$, of quasimodes for $\widehat{H}_\hbar + \delta_\hbar \langle \widehat{L}_\hbar \rangle$ with quasi-eigenvalue $\lambda_\hbar$ and width $r_\hbar = c(T,\hbar) \delta_\hbar \hbar $, where
\begin{align}
\label{e:constant_c}
c(T,\hbar) =  \frac{ C_\chi}{T} \left( 1 +  O\big( \hbar^{1/2}\theta(z_0,T) \big) \right), \quad C_\chi>0,
\end{align}
such that, for every $a \in \mathcal{C}_c^\infty(\R^{2d})$, $0 < \hbar \leq \hbar_0$, and $0< T < \mathscr{T}_\hbar$,
\begin{equation}
\label{e:best_concentration_2}
 \big \langle \psi_\hbar^T, \Op_\hbar(a) \psi_\hbar^T \big \rangle_{L^2(\R^d)} =  \frac{1}{ \Vert \chi_T \Vert_{L^2(\R)}^2} \int_\R \chi_T(s)^2 \, \langle a \rangle \circ \phi_s^{\langle L \rangle}(z_0) ds  +  O \big(\hbar^{1/2}\theta(z_0,T) \big),
\end{equation}
where $\chi_T:=\chi(\cdot/T)$. In particular,
$$
\mu_T = \frac{1}{ \Vert \chi_T \Vert_{L^2(\R)}^2} \int_\R \chi_T(s)^2 \big( \phi_s^{\langle V \rangle} \big)_* \mu_{\omega,z_0}ds  \in \mathcal{Q}(\widehat{H}_\hbar + \delta_\hbar \langle \widehat{L}_\hbar \rangle, r_\hbar).
$$
\end{lemma}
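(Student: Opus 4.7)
The plan is to build $\psi_\hbar^T$ by a double averaging of the coherent state $\Psi_{z_0}^\hbar$ at $z_0$, using Theorem \ref{t:propagator_theorem} as the central tool. Write $E:=\mathbb{M}(z_0)$. I would first select a sequence $\lambda_\hbar\to 1$ in the spectrum of $\widehat H_\hbar$ depending only on $\cE$ and, for each $z_0\in\cH^{-1}(\cE)$, choose a joint eigenvalue $\lambda^E_\hbar$ of the commuting operators $(\Op_\hbar(\widetilde{\cH}^E_j))_{j=1,\ldots,d_E}$ together with a real number $\Delta^{z_0}_\hbar$ so that $\widetilde v_E\cdot\lambda^E_\hbar+\delta_\hbar\Delta^{z_0}_\hbar=\lambda_\hbar$; such a choice exists for $\hbar$ small since $\widetilde v_E\cdot\widetilde{\cH}^E(z_0)=H(z_0)=1$ and the joint spectrum of $\Op_\hbar(\widetilde{\cH}^E)$ is discrete. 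The candidate quasimode is
\[
\psi_\hbar^T:=\frac{1}{\mathcal{N}_\hbar(T)}\int_0^T\int_{\T^{d_E}}\chi_T(s)\,e^{i(\tau\cdot\lambda^E_\hbar+s\Delta^{z_0}_\hbar)/\hbar}\,\mathbf{U}^E_\hbar(\tau,s)\,\Psi_{z_0}^\hbar\,d\tau\,ds,
\]
with $\mathcal{N}_\hbar(T)$ the $L^2$-normalisation. Heuristically, the $\tau$-integration projects $\Psi_{z_0}^\hbar$ onto the joint eigenspace of $\Op_\hbar(\widetilde{\cH}^E)$ with eigenvalue $\lambda^E_\hbar$, while the $s$-averaging smears along the flow of $\langle\widehat L_\hbar\rangle$ over $[0,T]$.

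To verify the quasimode equation, I would differentiate $\mathbf{U}^E_\hbar$ in $\tau_k$ and integrate by parts on $\T^{d_E}$ (legitimate since $\lambda^E_\hbar$ lies in the joint spectrum so that the integrand is periodic), obtaining $\Op_\hbar(\widetilde{\cH}^E_k)\psi_\hbar^T=\lambda^E_{k,\hbar}\psi_\hbar^T$ and hence $\widehat H_\hbar\psi_\hbar^T=(\widetilde v_E\cdot\lambda^E_\hbar)\psi_\hbar^T$ modulo a scalar contribution from the $H_j$ with $E_j=0$, which act on $\Psi_{z_0}^\hbar$ as $\hbar/2$. For the perturbation, the identity $\langle\widehat L_\hbar\rangle\mathbf{U}^E_\hbar=i\hbar\partial_s\mathbf{U}^E_\hbar$ together with an integration by parts in $s$ (no boundary term as $\chi\in\mathcal{C}_c^\infty((0,1))$) give
\[
\bigl(\langle\widehat L_\hbar\rangle-\Delta^{z_0}_\hbar\bigr)\psi_\hbar^T=-\frac{i\hbar}{\mathcal{N}_\hbar(T)}\int_0^T\int_{\T^{d_E}}\chi_T'(s)\,e^{i(\tau\cdot\lambda^E_\hbar+s\Delta^{z_0}_\hbar)/\hbar}\,\mathbf{U}^E_\hbar(\tau,s)\,\Psi_{z_0}^\hbar\,d\tau\,ds.
\]
Multiplying by $\delta_\hbar$, the $L^2$ norm of this remainder is of order $\delta_\hbar\hbar\,\|\chi_T'\|_{L^2}/\mathcal{N}_\hbar(T)$. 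Estimating $\mathcal{N}_\hbar(T)$ via the almost-orthogonality of the translated excited states centred at $\phi_s^{\langle L\rangle}\circ\Phi^{\widetilde{\cH}^E}_\tau(z_0)$ (a consequence of Theorem \ref{t:propagator_theorem}) and using $\|\chi_T'\|_{L^2}=T^{-1/2}\|\chi'\|_{L^2}$ yields the claimed width $r_\hbar=c(T,\hbar)\delta_\hbar\hbar$ with $c(T,\hbar)$ of the form \eqref{e:constant_c}.

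For the concentration identity \eqref{e:best_concentration_2}, I would substitute \eqref{e:linear_combination_of_coherent_states} into $\langle\psi_\hbar^T,\Op_\hbar(a)\psi_\hbar^T\rangle_{L^2(\R^d)}$, obtaining a quadruple integral over $(\tau,\tau',s,s')$ of matrix elements between translated (possibly excited) coherent states at $z(\tau,s)$ and $z(\tau',s')$. The Gaussian overlap of two such states decays rapidly in $|z(\tau,s)-z(\tau',s')|/\sqrt\hbar$, so that together with the oscillating factor $e^{i((\tau-\tau')\cdot\lambda^E_\hbar+(s-s')\Delta^{z_0}_\hbar)/\hbar}$ the dominant contribution concentrates on the diagonal; there, standard coherent-state Wigner calculus gives $a(z(\tau,s))+O(\hbar)$. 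The $\tau$-integration then converts $a$ into its $\Phi^{\widetilde{\cH}^E}$-average, which by Proposition \ref{p:av_prop} equals $\langle a\rangle$, and the $s$-integration produces the weight $\chi_T(s)^2/\|\chi_T\|_{L^2}^2$, yielding the main term in \eqref{e:best_concentration_2}. The contributions from the excited coefficients $c_\nu$ with $|\nu|\geq 1$ and from $\mathcal{R}_l$, controlled by \eqref{e:constant_estimate} and \eqref{e:reminder_estimate_ehrenfest}, produce the error $O(\hbar^{1/2}\theta(z_0,T))$, which is meaningful exactly in the range $T<\mathscr{T}_\hbar$ of Remark \ref{r:discussion_Ehrenfest}. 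The main technical obstacle, in my view, is controlling $\mathcal{N}_\hbar(T)$ and the off-diagonal matrix elements uniformly for $T<\mathscr{T}_\hbar$, where the Ehrenfest-type bounds \eqref{e:reminder_estimate_ehrenfest} begin to saturate; a secondary delicate point is to ensure that $\lambda^E_\hbar$ and $\Delta^{z_0}_\hbar$ can be chosen so that $\lambda_\hbar$ really does not depend on $z_0\in\cH^{-1}(\cE)$.
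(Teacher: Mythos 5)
Your construction is, in essence, the paper's construction, so the proposal is correct in spirit. Let me highlight where you diverge and where the argument would need tightening.

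Your ansatz places the true propagator $\mathbf{U}^E_\hbar(\tau,s)\Psi_{z_0}^\hbar$ inside the double average and only expands it via Theorem~\ref{t:propagator_theorem} when needed. The paper instead takes the \emph{truncated} coherent-state sum $\sum_{|\nu|\le 3(l-1)}c_\nu\psi_\nu^\hbar$ as the candidate $\psi_\hbar$ directly, with an explicit normalization constant $C_T(z_0)$. Your version is cleaner for the quasimode equation: integrating by parts in $\tau$ is \emph{exact} (since the $\Op_\hbar(\tilde\cH^E_j)$ really do generate $\mathbf U^E_\hbar$ in $\tau$), and with the correct observation that $\Op_\hbar(H_j)\Psi_{z_0}^\hbar=\tfrac{\hbar}{2}\Psi_{z_0}^\hbar$ on the $E_j=0$ directions, the only error is the $\chi_T'$ boundary term. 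The paper's truncated ansatz instead carries the remainders $R_\hbar^2,R_\hbar^3$ built from $\mathcal R_l$ through the quasimode equation. This is a presentational difference; both ultimately require the same stationary-phase computation of the Wigner function to estimate $\mathcal N_\hbar(T)$ (or equivalently to evaluate $C_T(z_0)$) and the $\chi_T'$-weighted integral.

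There are two places where your argument has a genuine gap. First, you do not treat the degenerate case $\Gamma_T=\{0\}$, i.e., when $X_{\langle L\rangle}(z_0)$ is tangent to the invariant torus $\mathcal T_\omega(z_0)$. In that case the map $(\tau,s)\mapsto z(\tau,s)$ does not embed $\T^{d_E}\times[0,T]$, the rank of $\bigl(d\tilde\cH^E(z_0),\,d\langle L\rangle(z_0)\bigr)$ drops to $d_E$, and the normalization constant and stationary-phase geometry change; the paper handles this separately by the change of variable $\tilde\tau=\tau+v_1 s$, collapsing the $s$-variable. Second, the ``rapid Gaussian decay'' you invoke to kill off-diagonal contributions is not uniform in $T$: the propagated Gaussians stretch at rate $\theta(z_0,T)$, so the overlap decays on a scale $\theta(z_0,T)\sqrt\hbar$, not $\sqrt\hbar$. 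The precise tool is the bound \eqref{e:estimate_norm__fourier_hermite} on the Fourier transform of the cross-Wigner Hermite kernels $\widehat{\mathfrak H}_{\nu,\nu'}$ restricted to the $(d_E+1)$-dimensional transversal $\vartheta(\mathbf r,\mathbf t)$; without this quantitative control, stating the error as $O(\hbar^{1/2}\theta(z_0,T))$ up to $T<\mathscr T_\hbar$ is an assertion rather than a proof, which you rightly flag as the main obstacle.

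Finally, your remark about choosing $\Delta_\hbar^{z_0}$ so that $\lambda_\hbar$ depends only on $\mathcal E$ and not on $z_0$ is a legitimate concern; the paper's choice $\lambda_\hbar=\tilde\lambda^1_\hbar+\cdots+\tilde\lambda^{d_E}_\hbar+\delta_\hbar\langle L\rangle(z_0)$ visibly depends on $z_0$ through both $\mathbb M(z_0)$ and $\langle L\rangle(z_0)$, and the $z_0$-uniform statement of the lemma is used only qualitatively (the quasi-eigenvalues all converge to $1$), so this is a looseness in the statement rather than a difference between the two proofs.
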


\begin{remark}
Note that \eqref{e:best_concentration_2} implies in particular that for $T>0$ the quasimodes are asymptotically normalized:
\[
\lim_{\hbar\to 0^+}\|\psi_\hbar^T\|_{L^2(\R^d)}=1.
\]
\end{remark}

\begin{proof}

Let $E:=\mathbb{M}(z_0)$. We first assume that $\Gamma_T =[0,T]$ or $\Gamma_T = \mathbb{T}^1$, and consider the map
\begin{equation}
\label{e:parametrization}
\mathbb{T}^{d_E} \times [0,T] \ni (\tau,s) \mapsto z(\tau,s) := \phi_s^{\langle L \rangle} \circ \Phi_\tau^{\widetilde{\cH}^{E}}(z_0)\in \mathcal{C}_{z_0}(T).
\end{equation}
Let $M_\hbar$ be a sequence given by
\begin{equation}
\label{e:multieigenvalue}
M_\hbar :=  \big( \widetilde{\lambda}_\hbar^1, \ldots, \widetilde{\lambda}_\hbar^{d_E} \big) = (\widetilde{\cH}^E_1(z_0),\hdots, \widetilde{\cH}^E_{d_E}(z_0)) + O(\hbar),
\end{equation}
where $\widetilde{\lambda}_\hbar^j$ for $j = 1, \ldots, d_E$ are eigenvalues of $\Op_\hbar(\tilde{\cH}^E_j)$ that can be defined as in the proof of \cite[Lemma 1]{ArMa20}.
We define our candidate $(\psi_\hbar)$ to desired quasimode by:
\begin{equation}
\label{e:quasimode_I}
\psi_\hbar(x) := \left(\frac{C_T(z_0)}{ \sqrt{\hbar^{d_E+1}}}
\right)^{1/2} \;  \sum_{\vert \nu \vert = 0}^{3(l-1)} \int_{\mathbb{T}^{d_E} \times \R} \chi_T(s) e^{\frac{i(M_\hbar,E')\cdot (\tau,s)}{\hbar}} c_\nu(\tau,s,\hbar) \psi_\nu^\hbar(\tau,s,x)  \frac{d\tau}{|\T^{d_E}|}ds, 
\end{equation}
where $E'=\langle L \rangle(z_0)$, the constant $C_T(z_0) \in \R$ is given by
\begin{equation}
\label{e:constant_T}
C_T(z_0) = \sqrt{ \frac{\det \mathcal{G}_{z_0}}{\pi^{d_E +1} }} \cdot \frac{ \vert \mathbb{T}^{d_E}   \vert}{T \Vert \chi \Vert_{L^2}^2 }, \quad \mathcal{G}_{z_0} :=  \big( d \widetilde{\mathcal{H}}^E(z_0), d \langle L \rangle(z_0) \big) \, \big( d \widetilde{\mathcal{H}}^E(z_0), d \langle L \rangle(z_0) \big)^{\textnormal{t}},
\end{equation}
with $\vert \mathbb{T}^{d_E} \vert$ being the volume of $\mathbb{T}^{d_E}$.

We will show that this sequence defines a quasimode that satisfies all the claims of the theorem. We star by showing that \eqref{e:best_concentration_2} takes place. We first compute the cross-Wigner distribution of the functions: \begin{equation}
\varphi_\nu^\hbar(x) := \left(\frac{C_T(z_0)}{\sqrt{\hbar^{d_E+1}}} \right)^{1/2} \int_{\mathbb{T}^{d_E} \times \R} \chi_T(s) c_\nu(\tau,s,\hbar) \psi_\nu^\hbar(\tau,s,x) \frac{d\tau}{|\T^{d_E}|}ds, \quad \nu \in \mathbb{Z}^d_+.
\end{equation}
Introducing the notation $\mathbf{t} = (\tau,s)$ to shorten the formulas below, we lead to analyse the following oscillatory integral which, as we will see, has stationary-phase near the diagonal $\mathbf{t} \sim \mathbf{t}'$:
\begin{align*}
W_\hbar[\varphi_\nu^\hbar,\varphi_{\nu'}^\hbar] & \\[0.2cm] 
 & \hspace*{-1.5cm} = \frac{C_T(z_0)}{\sqrt{\hbar^{d_E+1}}} \int_{(\mathbb{T}^{d_E} \times \R)^2} \kappa_{\nu,\nu'}^T(\mathbf{t},\mathbf{t}',\hbar)e^{i\frac{ \gamma(\mathbf{t}) - \gamma(\mathbf{t}')}{\hbar}} W_\hbar \big[ \widehat{T}(z(\mathbf{t})) \widehat{F}_{z_0}(\mathbf{t}) \Psi_\nu^\hbar,  \widehat{T}(z(\mathbf{t}')) \widehat{F}_{z_0}(\mathbf{t}') \Psi_{\nu'}^\hbar \big] d \mathbf{t} d \mathbf{t}',
\end{align*}
where $\kappa^T_{\nu,\nu'}(\mathbf{t},\mathbf{t}',\hbar) = c_\nu(\mathbf{t},\hbar) \overline{c_{\nu'}(\mathbf{t}',\hbar)} \chi_T(s) \chi_T(s')$, the cross-Wigner function $W_\hbar[\varphi,\psi](x,\xi)$ is defined by
\begin{equation}
W_\hbar[\varphi,\psi](x,\xi) := \frac{1}{(2\pi)^d} \int_{\R^d} e^{\xi \cdot v} \varphi\left(x - \frac{\hbar v}{2} \right) \overline{ \psi\left( x + \frac{\hbar v}{2} \right)} dv,
\end{equation}
and  we have written $d \mathbf{t}  =  \frac{d\tau}{|\T^{d_E}|}ds$ for simplicty. An explicit computation (see for instance \cite[Eq. (9.25)]{DeGosson11}) gives us
\begin{align*}
W_\hbar\big[ \widehat{T}(z(\mathbf{t})) \widehat{F}_{z_0}(\mathbf{t}) \Psi_\nu^\hbar,  \widehat{T}(z(\mathbf{t}')) \widehat{F}_{z_0}(\mathbf{t}') \Psi_{\nu'}^\hbar \big](z) & \\[0.2cm]
 & \hspace*{-6.5cm} = \exp \left( -\frac{i}{2\hbar} \sigma( z(\mathbf{t}), z(\mathbf{t}')) -\frac{i}{\hbar} \sigma(z, z(\mathbf{t}) - z(\mathbf{t}')) \right) W_\hbar \big[ \widehat{F}_{z_0}(\mathbf{t}) \Psi_\nu^\hbar,  \widehat{F}_{z_0}(\mathbf{t}') \Psi_{\nu'}^\hbar   \big](z- \mathbf{z}(\mathbf{t},\mathbf{t}')),
\end{align*}
where $\mathbf{z}(\mathbf{t},\mathbf{t}') = \frac{1}{2} (z(\mathbf{t}) + z(\mathbf{t}'))$.
Moreover (see \cite[Prop. 244]{DeGosson11}, \cite[Thm. 6.1]{Troppmann2017}, \cite[Lemma 10]{Dietert17}, the structure of generalized Hermite functions is conserved by the Wigner transform, in the sense that
\begin{align*}
W_\hbar \big[ \widehat{F}_{z_0}(\mathbf{t}) \Psi_\nu^\hbar,  \widehat{F}_{z_0}(\mathbf{t}') \Psi_{\nu'}^\hbar   \big](z)
 & = \frac{1}{\sqrt{2^{\vert \nu \vert + \vert \nu' \vert} \nu! \nu'!}} \frac{\det \mathcal{Q}(\mathbf{t}, \mathbf{t}')^{-1/2}}{(\pi \hbar)^d} P^{\mathcal{M}}_{\nu,\nu'}\left( \frac{\mathcal{Q}(\mathbf{t}, \mathbf{t}')^{-1} z}{\sqrt{\hbar}} \right)  e^{\frac{i}{\hbar} \mathcal{B}(\mathbf{t},\mathbf{t}')z \cdot z} \\[0.2cm]
 & =: \mathfrak{H}_{\nu,\nu'}[\mathbf{t}, \mathbf{t}']\left( \frac{z}{\sqrt{\hbar}} \right),
\end{align*}
where
$$
\mathcal{Q}(\mathbf{t}, \mathbf{t}') = \frac{1}{2} \left( \begin{array}{cc}
\overline{P}(\mathbf{t}) & P(\mathbf{t}') \\[0.2cm]
\overline{Q}(\mathbf{t}) & Q(\mathbf{t}') 
\end{array} \right), \quad \mathcal{P}(\mathbf{t}, \mathbf{t}') = \left( \begin{array}{cc}
- J \overline{P}(\mathbf{t}) & J P(\mathbf{t}') \\[0.2cm]
- J \overline{Q}(\mathbf{t}) & J Q(\mathbf{t}')
\end{array} \right),
$$
$\mathcal{M} = \mathcal{Q}^{-1} \overline{\mathcal{Q}}$, the matrix $\mathcal{B} = \mathcal{P} \mathcal{Q}^{-1}$ lies in the Siegel upper half-space, and the polynomials $P^{\mathcal{M}}_{\nu,\nu'}$ are defined by the three-term recursion relation
$$
P_{0,0}(z) = 1, \quad (P_{(\nu,\nu') + e_j}^{\mathcal{M}}z)_{j=1}^{2d} = z P_{\nu,\nu'}^{\mathcal{M}}(z) - e_j \cdot \mathcal{M}\nabla P_{\nu,\nu'}(z), \quad j= 1, \ldots, 2d.
$$
In particular, when $\mathbf{t} = \mathbf{t}'$, 
$$
\mathcal{M}(\mathbf{t}, \mathbf{t}) = \mathcal{Q}^{-1} \overline{\mathcal{Q}} = \left( \begin{array}{cc}
0 & \operatorname{Id}_d \\[0.2cm]
\operatorname{Id}_d & 0 
\end{array} \right).
$$
In addition, we will need the following estimate that is proved (in a slightly less general situation) in \cite[Lemma 2.4]{Ar20}) on the Hermite functions $\mathfrak{H}_{\nu,\nu'}$: for every real  $2d \times (d_E +1)$ matrix $\mathcal{K}$ of rank $d_E+1$,
\begin{align}
 \label{e:estimate_norm__fourier_hermite}
 \sup_{0 \leq s,s' \leq T} \int_{\R^{d_E+1}} \vert \partial^\gamma \widehat{\mathfrak{H}}_{\nu,\nu'}[(\tau,s),(\tau',s')] ( \mathcal{K} \mathbf{r}) \vert d \mathbf{r} & \leq C_{\mathcal{K},\gamma} \, \theta(z_0,T)^{\vert \nu \vert + \vert \nu' \vert}, \quad \gamma \in \mathbb{N}^{2d}. 
\end{align}

On the other hand, the Wigner distribution $W_\hbar[\varphi_\nu^\hbar,\varphi_{\nu'}^\hbar]$ satisfies, when it acts on a test function $a \in \mathcal{C}_c^\infty(\R^d)$:
\begin{align*}
 W_\hbar[\varphi_\nu^\hbar,\varphi_{\nu'}^\hbar](a) & \\[0.2cm]
  & \hspace*{-2.2cm} = \frac{C_T(z_0)}{\sqrt{\hbar^{d_E+1}}} \int_{(\mathbb{T}^{d_E} \times \R)^2} \int_{\R^{2d}} \kappa_{\nu,\nu'}^T(\mathbf{t}, \mathbf{t}') a\big( \sqrt{\hbar} z + \mathbf{z}(\mathbf{t},\mathbf{t}') \big) e^{\frac{i}{\hbar} f_1(\mathbf{t},\mathbf{t}')} e^{\frac{i}{\sqrt{\hbar}} f_2(z,\mathbf{t},\mathbf{t}')} \mathfrak{H}_{\nu,\nu'}[\mathbf{t}, \mathbf{t}'](z) dz d \mathbf{t} d \mathbf{t}',
\end{align*}
where, denoting $\mathbf{E}_\hbar = (M_\hbar,E')$, the phase functions $f_1$ and $f_2$ are given by
\begin{align*}
f_1(\mathbf{t}, \mathbf{t}') & = \mathbf{E}_\hbar \cdot (\mathbf{t} - \mathbf{t}') + \frac{1}{2}\sigma(z(\mathbf{t}),z(\mathbf{t}')) + \gamma(\mathbf{t}) - \gamma(\mathbf{t}'), \\[0.2cm]
f_2(z,\mathbf{t},\mathbf{t}') & = x \cdot (\xi(\mathbf{t}) - \xi(\mathbf{t}')) - \xi \cdot (x(\mathbf{t}) - x(\mathbf{t}')).
\end{align*}
We next expand by Taylor in $\mathbf{t}'$ up to second order the functions $f_1$ and $f_2$ to localize the oscillatory integral near the diagonal. We first write
\begin{align*}
z(\mathbf{t}') & = z(\mathbf{t}) + (\mathbf{t}'- \mathbf{t}) \cdot \nabla_{\mathbf{t}} z(\mathbf{t}) + O(\vert \mathbf{t} - \mathbf{t}' \vert^2) \\[0.2cm]
\gamma(\mathbf{t}') & = \gamma(\mathbf{t}) + (\mathbf{t}'- \mathbf{t}) \cdot \nabla_{\mathbf{t}} \gamma(\mathbf{t}) + O(\vert \mathbf{t} - \mathbf{t'} \vert^2).
\end{align*}
Notice also that the derivative of the action $\gamma(\mathbf{t})$ satisfies
$$
\nabla_{\mathbf{t}} \gamma(\mathbf{t}) = \left( 0, -E + \frac{1}{2} \big( \partial_s x(\tau,s) \cdot \xi(\tau,s) - x(\tau,s) \cdot \partial_s \xi(\tau,s) \right),
$$
and the second term compensates with the linearization of the symplectic product $\sigma(z(\mathbf{t}), z(\mathbf{t}'))$:
$$
\sigma( z(\mathbf{t}), (\mathbf{t}' - \mathbf{t}) \cdot \nabla_{\mathbf{t}} z(\mathbf{t}) ) = (\tau'-\tau) \cdot M_\hbar + (s'-s)\frac{1}{2} \big( \partial_s x(\tau,s) \cdot \xi(\tau,s) - x(\tau,s) \cdot \partial_s \xi(\tau,s)\big).
$$
Thus, using \eqref{e:multieigenvalue}, we get
\begin{align}
f_1(\mathbf{t}, \mathbf{t}') & = (M_\hbar - \widetilde{\mathcal{H}}^E(z_0)) \cdot (\tau - \tau') + O(\vert \mathbf{t} - \mathbf{t}' \vert^3) = O(\hbar \vert \mathbf{t}- \mathbf{t}' \vert ) + O(\vert \mathbf{t} - \mathbf{t}' \vert^3) , \\[0.2cm]
\label{l:f_2}
f_2(z,\mathbf{t},\mathbf{t}') & = -\sigma \big( z,(\mathbf{t} - \mathbf{t}') \cdot \nabla_{\mathbf{t}} z(\mathbf{t}) \big) + z \cdot R(\mathbf{t}, \mathbf{t}'),
\end{align}
where $R(\mathbf{t}, \mathbf{t}') = O(\vert \mathbf{t} - \mathbf{t'} \vert^2)$.  Then,  changing coordinates $\mathbf{r} = \mathbf{t}' - \mathbf{t} \in \mathbb{T}^{d_E}\times\R$, we obtain:
\begin{align*}
\int_{\mathbb{T}^{d_E} \times \R} \kappa_{\nu,\nu'}^T(\mathbf{t}, \mathbf{t}') e^{\frac{i}{\hbar} f_1(\mathbf{t},\mathbf{t}')} \int_{\R^{2d}} e^{\frac{i}{\sqrt{\hbar}}  f_2(z,\mathbf{t},\mathbf{t}')}  a\big( \sqrt{\hbar} z + \mathbf{z}(\mathbf{t},\mathbf{t}') \big)  \mathfrak{H}_{\nu,\nu'}[\mathbf{t}, \mathbf{t}'] dz  d \mathbf{t}' \\[0.2cm]
 & \hspace*{-12.4cm} = \int_{ \mathbb{T}^{d_E}\times\R} \kappa_{\nu,\nu'}^T(\mathbf{t}, \mathbf{t}+ \mathbf{r}) e^{\frac{i}{\hbar} f_1(\mathbf{t},\mathbf{t} + \mathbf{r})} \\[0.2cm] 
 & \hspace*{-10cm} \times \int_{\R^{2d}}e^{\frac{i}{\sqrt{\hbar}} f_2(z,\mathbf{t},\mathbf{t} + \mathbf{r})}  a\big( z(\mathbf{t}) + \sqrt{\hbar} z +O_{\mathbf{t}}( \vert \mathbf{r} \vert)\big) \mathfrak{H}_{\nu,\nu'}[\mathbf{t}, \mathbf{t} + \mathbf{r}](z) dz \frac{d\mathbf{r}}{ \vert \mathbb{T}^{d_E}  \vert}
 \\[0.2cm]
 & \hspace*{-12.4cm} = \sqrt{\hbar^{d_E+1}} \int_{\frac{\Omega_{d_E}}{\sqrt{\hbar}}\times \R}   \kappa_{\nu,\nu'}^T(\mathbf{t}, \mathbf{t}+ \sqrt{\hbar}\mathbf{r}) e^{\frac{i}{\hbar} f_1(\mathbf{t},\mathbf{t} + \sqrt{\hbar}\mathbf{r})} \times \\[0.2cm]
  & \hspace*{-10cm} \times  \int_{\R^{2d}} e^{i f_2(z,\mathbf{t},\mathbf{t} + \sqrt{\hbar} \mathbf{r}) } a\big( z(\mathbf{t}) + \sqrt{\hbar} z +O_{\mathbf{t}}( \sqrt{\hbar} \vert \mathbf{r} \vert)\big) \mathfrak{H}_{\nu,\nu'}[\mathbf{t}, \mathbf{t} + \sqrt{\hbar} \mathbf{r}](z) dz \frac{d\mathbf{r}}{ \vert \mathbb{T}^{d_E}  \vert},
\end{align*}
where $\Omega_{d_E}\subseteq\R^{d_E}$ is a fundamental domain for $\T^{d_E}$.

On the other hand, by \eqref{l:f_2}, we can also write $-\sigma(z, \mathbf{r} \cdot \nabla_{\mathbf{t}} z(\mathbf{t})) = z \cdot \vartheta(\mathbf{r},\mathbf{t})$ for the vector
\begin{equation}
\label{e:vector}
\vartheta(\mathbf{r},\mathbf{t}) := \big( d\widetilde{\cH}^E(z(\mathbf{t})), d \langle L \rangle(z(\mathbf{t})) \big)^{\textnormal{t}} \mathbf{r},
\end{equation}
and observe that $\rk \big( d\widetilde{\cH}^E(z(\mathbf{t})), d \langle L \rangle(z(\mathbf{t})) \big) = d_E +1 $ by the hypothesis $\Gamma_T = [0,T]$ or $\Gamma_T = \mathbb{T}^1$. This matrix will play the role of $\mathcal{K}$ later on when using \eqref{e:estimate_norm__fourier_hermite}.
We then notice that the integral in $z$ performs the Fourier transform of the Hermite function  $\mathfrak{H}_{\nu,\nu'}[\mathbf{t}, \mathbf{t} + \sqrt{\hbar} \mathbf{r}](z)$ multiplied by the  function $a\big( z(\mathbf{t}) + \sqrt{\hbar} z +O_{\mathbf{t}}( \sqrt{\hbar} \vert \mathbf{r} \vert) \big)$, which is asymptotically constant as $\hbar \to 0$. By Taylor expansion of the test function $a$ around $z(\mathbf{t})$, we have
\begin{align*}
a\big( z(\mathbf{t}) + \sqrt{\hbar} z +O_{\mathbf{t}}( \sqrt{\hbar} \vert \mathbf{r} \vert) \big) & \\[0.2cm]
 & \hspace*{-3cm} = a(z(\mathbf{t})) + \big( \sqrt{\hbar} z + O_{\mathbf{t}}(\sqrt{\hbar} \vert \mathbf{r} \vert) \big) \cdot \int_0^1 \nabla a( z(\mathbf{t}) + s (\sqrt{\hbar} z +O_{\mathbf{t}}( \sqrt{\hbar} \vert \mathbf{r} \vert))) ds \\[0.2cm]
 & \hspace*{-3cm} =: a(z(\mathbf{t})) + \big( \sqrt{\hbar} z + O_{\mathbf{t}}(\sqrt{\hbar} \vert \mathbf{r} \vert) \big) \cdot r_a(\sqrt{\hbar} z,\mathbf{t},\sqrt{\hbar}\mathbf{r}).
\end{align*}
Taking the Fourier transform $\mathcal{F}_z$ in $z$, we observe that
\begin{align*}
\mathcal{F}_z \Big[ z \mathfrak{H}_{\nu,\nu'}[\mathbf{t}, \mathbf{t} + \sqrt{\hbar} \mathbf{r}](z) \cdot r_a(\sqrt{\hbar} z,\mathbf{t},\sqrt{\hbar}\mathbf{r})  \Big] (-\vartheta(\mathbf{r},\mathbf{t})) & \\[0.2cm]
 & \hspace*{-6cm} = i \int_{\R^{2d}} \nabla \widehat{\mathfrak{H}}_{\nu,\nu'}[\mathbf{t}, \mathbf{t} + \sqrt{\hbar} \mathbf{r}] (w + \vartheta(\mathbf{r},\mathbf{t})) \cdot \frac{1}{\hbar^d} \mathcal{F}_z [ r_a( z, \mathbf{t}, \sqrt{\hbar} \mathbf{r})]\left( \frac{w}{\sqrt{\hbar}} \right) dw,
\end{align*}
hence, denoting $r = s - s'$,
\begin{align*}
\int_{\R^{d_E+1}} \chi_T(s)\chi_T( s + \sqrt{\hbar} r) \left \vert \mathcal{F}_z \Big[ z \mathfrak{H}_{\nu,\nu'}[\mathbf{t}, \mathbf{t} + \sqrt{\hbar} \mathbf{r}](z) \cdot r_a(\sqrt{\hbar} z,\mathbf{t},\sqrt{\hbar}\mathbf{r})  \Big] (\vartheta(\mathbf{r},\mathbf{t})) \right \vert d\mathbf{r}  \\[0.2cm]
 & \hspace*{-12cm} \leq \sum_{\vert \gamma \vert = 1}  \sup_{\tilde{\mathbf{r}} \in \Omega_{d_E} \times [-T,T]}\big \Vert \mathcal{F}_z[ r_a^\gamma(z, \mathbf{t},\tilde{\mathbf{r}})](\cdot) \big \Vert_{L^1(\R^{2d})} \int_{\R^{d_E+1}} \big \vert \partial^\gamma \widehat{ \mathfrak{H}}_{\nu,\nu'}[\mathbf{t},\mathbf{t}+ \sqrt{\hbar} \mathbf{r}](\vartheta(\mathbf{r},\mathbf{t})) \big \vert d \mathbf{r},
\end{align*}
where for any $\gamma \in \mathbb{N}^{2d}$, $r_a^\gamma =  \int_0^1 \partial^\gamma a( z(\mathbf{t}) + s (\sqrt{\hbar} z +O_{\mathbf{t}}( \sqrt{\hbar} \vert \mathbf{r} \vert))) ds$. This argument shows that, using \eqref{e:constant_estimate}, \eqref{e:estimate_norm__fourier_hermite} together with the dominated convergence theorem, one has
\begin{align*}
\frac{C_T(z_0)}{\sqrt{\hbar^{d_E+1}}} \int_{(\mathbb{T}^{d_E} \times \R)^2} \int_{\R^{2d}}  \kappa_{\nu,\nu'}^T(\mathbf{t}, \mathbf{t}') a\big( \sqrt{\hbar} z + \mathbf{z}(\mathbf{t},\mathbf{t}') \big) e^{\frac{i}{\hbar} f_1(\mathbf{t},\mathbf{t}')} e^{\frac{i}{\sqrt{\hbar}} f_2(z,\mathbf{t},\mathbf{t}')} \mathfrak{H}_{\nu,\nu'}[\mathbf{t}, \mathbf{t}'](z) dz d \mathbf{t} d \mathbf{t}' \\[0.2cm]
 & \hspace*{-14.8cm} = C_T(z_0) \int_{\mathbb{T}^{d_E} \times \R} \int_{\R^{d_E+1}}  \kappa_{\nu,\nu'}^T(\mathbf{t}, \mathbf{t}) a(z(\mathbf{t})) \widehat{\mathfrak{H}}_{\nu,\nu'}[\mathbf{t}, \mathbf{t}]( -\vartheta(\mathbf{r},\mathbf{t})) \frac{d\mathbf{r}}{ \vert \mathbb{T}^{d_E}  \vert} d\mathbf{t} +  O\left( \hbar^{\frac{1}{2} + \frac{\vert \nu \vert + \vert \nu' \vert}{2}} \right) D_T,
\end{align*}
where, by \eqref{e:estimate_norm__fourier_hermite}, $|D_T|\leq C \theta(z_0,T)^{\vert \nu \vert + \vert \nu'\vert }$.

In particular, by \cite[(11.44)]{DeGosson11}, we can compute the explicit expressions for the Gaussian $\mathfrak{H}_{0,0}[\mathbf{t},\mathbf{t}]$:
$$
\mathfrak{H}_{0,0}[\mathbf{t}, \mathbf{t}](z) =  \frac{1}{\pi^d} e^{- (F_{z_0}(\mathbf{t})^{-1})^{\textnormal{t}} F_{z_0}(\mathbf{t})^{-1} z \cdot z},
$$
and hence, the Fourier inversion formula yields:
$$
\widehat{\mathfrak{H}}_{0,0}[\mathbf{t}, \mathbf{t}](w) = \frac{1}{\pi^d} e^{- F_{z_0}(\mathbf{t}) F_{z_0}(\mathbf{t})^{\textnormal{t}} w \cdot w}.
$$
Using next that $F^{\textnormal{t}}_{z_0} = JF^{-1}_{z_0}J^{-1}$, since $F_{z_0}$ is a symplectic matrix, we also have that 
\begin{align*}
F^{\textnormal{t}}_{z_0}(\mathbf{t}) J X_{\widetilde{\cH}_j^E}(z(\mathbf{t})) & = J F_{z_0}(\mathbf{t})^{-1} X_{\widetilde{\cH}_j^E}(z(\mathbf{t})) = J X_{\widetilde{\cH}_j^E}(z_0) = d\widetilde{\cH}_j^E(z_0), \\[0.2cm]
F^{\textnormal{t}}_{z_0}(\mathbf{t}) J X_{\langle L \rangle}(z(\mathbf{t})) & = J F_{z_0}(\mathbf{t})^{-1} X_{\langle L \rangle}(z(\mathbf{t})) = J X_{\langle L \rangle}(z_0) = d\langle L \rangle(z_0).
\end{align*}
Therefore:
\begin{align*}
C_T(z_0) \int_{\mathbb{T}^{d_E} \times \R} \int_{\R^{d_E+1}} \chi_T(s)^2 a(z(\mathbf{t})) \widehat{\mathfrak{H}}_{0,0}[\mathbf{t}, \mathbf{t}]( \vartheta(\mathbf{r},\mathbf{t})) \frac{d\mathbf{r}}{ \vert \mathbb{T}^{d_E}  \vert} d\mathbf{t} & \\[0.2cm]
 & \hspace*{-6cm} = C_T(z_0) \int_{\mathbb{T}^{d_E} \times \R} \int_{\R^{d_E+1}}  \chi_T(s)^2 a(z(\mathbf{t})) \widehat{\mathfrak{H}}_{0,0}[\mathbf{0}, \mathbf{0}]\big( \vartheta(\mathbf{r},0) \big) \frac{d\mathbf{r}}{ \vert \mathbb{T}^{d_E}  \vert} d\mathbf{t} \\[0.2cm]
 & \hspace*{-6cm} = \frac{1}{\Vert \chi_T \Vert_{L^2}^2 }   \int_{\mathbb{T}^{d_E} \times \R} \chi_T(s)^2 a(z(\mathbf{t})) d\mathbf{t} \\[0.2cm]
 & \hspace*{-6cm} = \frac{1}{\Vert \chi_T \Vert_{L^2}^2} \int_\R \chi_T(s)^2 \, \langle a \rangle \circ \phi_s^{\langle L \rangle}(z_0) ds,
\end{align*}
and, moreover,
\begin{align*}
\left \vert C_T(z_0) \int_{\mathbb{T}^{d_E} \times \R} \int_{\R^{d_E+1}} \kappa_{\nu,\nu',\hbar}^T(\mathbf{t}, \mathbf{t}) a(z(\mathbf{t})) \widehat{\mathfrak{H}}_{\nu,\nu'}[\mathbf{t}, \mathbf{t}]( \vartheta(\mathbf{r},\mathbf{t})) d\mathbf{r} d\mathbf{t}  \right \vert = O\left( \hbar^{ \frac{\vert \nu \vert + \vert \nu' \vert}{2}} \theta(z_0,T)^{\vert \nu \vert + \vert \nu' \vert} \right).
\end{align*}

Finally, we prove that the the sequence $\psi_\hbar$ satisfies the condition of being a quasimode of the required width $O(\delta_\hbar \hbar)$ by choosing $l$ sufficiently large. Using the definition \eqref{e:quasimode_I} of $\psi_\hbar$ and integration by parts in $\tau = (\tau_1, \ldots, \tau_{d_E})$ and in $s$, we obtain, defining $\lambda_\hbar := \tilde{\lambda}_\hbar^1 + \cdots + \tilde{\lambda}_\hbar^{d_E} + \delta_\hbar E'$, that
\begin{align*}
(\widehat{H}_\hbar + \delta_\hbar \langle \widehat{L}_\hbar \rangle  - \lambda_\hbar ) \psi_\hbar &  \\[0.2cm]
 & \hspace{-3cm}
  =  -i \delta_\hbar \hbar \left(\frac{C_T(z_0)}{\sqrt{\hbar^{d_E+1}}} \right)^{1/2} \; \sum_{\vert \nu \vert = 0}^{3l - 1} \int_{\mathbb{T}^{d_E} \times \R} \chi'_T(s) e^{\frac{i(M_\hbar,E')\cdot (\tau,s)}{\hbar}} c_\nu(\tau,s,\hbar) \psi_\nu^\hbar(\tau,s,x)  \frac{d\tau}{|\T^{d_E}|}ds \\[0.2cm]
&  \hspace{-3cm} \quad - i \delta_\hbar \hbar \left(\frac{C_T(z_0)}{\sqrt{\hbar^{d_E+1}}} \right)^{1/2} \;  \int_{\mathbb{T}^{d_E} \times \R} \chi'_T(s) \mathcal{R}_l(\cdot,z_0,\tau,s,\hbar)  \frac{d\tau}{|\T^{d_E}|}ds \\[0.2cm]
 & \hspace{-3cm} \quad -  \left(\frac{C_T(z_0)}{ \sqrt{\hbar^{d_E+1}}}
\right)^{1/2} \int_{\mathbb{T}^{d_E} \times \R} \chi_T(s) (\widehat{H}_\hbar + \delta_\hbar \langle \widehat{L}_\hbar \rangle ) \mathcal{R}_l(\cdot,z_0,\tau,s,\hbar)   \frac{d\tau}{|\T^{d_E}|}ds \\[0.2cm]
& \hspace{-3cm} =: R_\hbar^1 + R_\hbar^2 + R_\hbar^3.
\end{align*}
Using that that $\operatorname{WF}_\hbar\big(  \mathcal{R}_l(\cdot,z_0,\tau,s,\hbar) \big) = \{ z(\tau,s) \}$ and \eqref{e:reminder_estimate_ehrenfest} we get that 
\begin{align}
\label{e:remainder_l}
\Vert R_\hbar^3 \Vert_{L^2} & = O\left(  \hbar^{-\frac{d_E + 1}{4} + \frac{l}{2}} T \theta(z_0,T)^{2+l} \right) \\[0.2cm]
\Vert R_\hbar^2 \Vert_{L^2} & = O\left( \delta_\hbar \hbar^{-\frac{d_E + 1}{4} + \frac{l}{2} + 1} T \theta(z_0,T)^{2+l} \right).
\end{align}
Taking $l$ sufficiently large (in particular $l \geq d_E + 2$) is enough to bound these two terms by an $O(\sqrt{\hbar}\theta(z_0,T))$ for $0 < T < \mathscr{T}_\hbar$.
We finally estimate $\Vert R_\hbar^1 \Vert_{L^2}$ by repeating the previous argument used to compute $W_{\psi_\hbar}^\hbar(a)$, but replacing $\chi_T$ by $\chi_T^\prime$ in the integrand and taking $a \equiv 1$, and we obtain
\begin{equation}
\Vert R^1_\hbar \Vert_{L^2} = c(T,\hbar) \delta_\hbar \hbar, 
\end{equation}
where $c(T,\hbar)$ is given by \eqref{e:constant_c} with $C_\chi = \frac{\Vert \chi' \Vert_{L^2}}{\Vert \chi \Vert_{L^2}}$.

Assume now that $\Gamma_T = \{ 0 \}$. In this case, notice that 
$$
\mathbb{T}^{d_E} \times \R \in (\tau,s) \mapsto \phi_s^{\langle L \rangle} \circ \Phi^{\widetilde{\cH}^E}_\tau(z_0)  = \Phi^{\widetilde{\cH}^E}_{\tau + v_1 s}(z_0) \in \mathcal{C}_{z_0}(T),
$$
where $v_1 \in \mathbb{T}^{d_E}$. Moreover,
$$
F_{z_0}(\tau,s) := d \big( \phi_s^{\langle L \rangle} \circ \Phi^{\widetilde{\cH}^E}_\tau(z_0) \big) = d \Phi_{\tau + v_1 s}^{\widetilde{\cH}^E}(z_0) =: \tilde{F}_{z_0}(\tau + v_1 s).
$$
We consider again the unitary operator given by \eqref{e:unitary_operator}. We define $\psi_\hbar$ by  \eqref{e:quasimode_I}, where now the normalizing constant $C_T(z_0)$ is given by
$$
C_T(z_0) = \sqrt{ \frac{\det \widetilde{\mathcal{G}}_{z_0}}{\pi^{d_E} }} \cdot \frac{ \vert \mathbb{T}^{d_E}   \vert}{T^2 \Vert \chi \Vert_{L^1}^2 }, \quad \widetilde{\mathcal{G}}_{z_0} :=   d \widetilde{\mathcal{H}}^E(z_0)  \,  d \widetilde{\mathcal{H}}^E(z_0)^{\textnormal{t}},
$$
and observe that
\begin{align*}
\psi_\nu^\hbar(\tau,s,x) & = e^{\frac{i \gamma(\tau,s)}{\hbar}} \, \widehat{T}\big( \phi_s^{\langle V \rangle} \circ \Phi^{\widetilde{\cH}^E}_\tau(z_0) \big) \,\widehat{F}_{z_0}(\tau,s) \Psi_\nu^\hbar(x) \\[0.2cm]
 & = e^{\frac{i \gamma(\tau,s)}{\hbar}} \, \widehat{T}\big(  \Phi_{\tau + v_1 s}^{\widetilde{\cH}^E}(z_0) \big) \,\widehat{\tilde{F}}_{z_0}(\tau + v_1 s) \Psi_\nu^\hbar(x).
\end{align*}
Then using that $\vert c_0(\tau,s,\hbar) - 1 \vert \leq C_l \hbar^{1/2}$ and \eqref{e:constant_estimate}, and performing the change of variables $\widetilde{\tau} = \tau + v_1 s$, $\widetilde{\tau}' = \tau' + v_1 s'$ inside the oscillatory integral, which now has stationary phase at\footnote{Notice, regarding the parameters $(s,s') \in [0,T]\times [0,T]$ that there is no concentration on the diagonal $s = s'$. In other words, the metric of the Gaussian in the variable $\mathbf{r} = \widetilde{\tau} - \widetilde{\tau}'$ has now rank $d_E$ (and not $d_E+1$) coinciding with the rank of $d \widetilde{\mathcal{H}}^E(z_0)$.} $\widetilde{\tau} = \widetilde{\tau}'$ we obtain the same limit estimate as before. Indeed, since $\langle a \rangle \circ \phi_s^{\langle L \rangle}(z_0) = \langle a \rangle(z_0)$, we have
$$
\langle a \rangle (z_0) = \frac{1}{ \Vert \chi_T \Vert_{L^2(\R)}^2} \int_\R \chi_T(s)^2 \, \langle a \rangle \circ \phi_s^{\langle L \rangle}(z_0) ds.
$$
The proof then follows essentially the same lines of the previous one. The constant $C_\chi$ is in this case given by $C_\chi = \frac{\Vert \chi' \Vert_{L^1}}{\Vert \chi \Vert_{L^1}}$.
\end{proof}

\section{Proofs of Theorems \ref{t:quasimodes_1} and \ref{t:improved_quasimodes}}  \label{s:proofs}

In this final section, we construct quasimodes for the perturbed operator $\widehat{P}_\hbar$, and prove the concentration and non-concetration properties displayed by the quasimodes of this system with different widths. 

\begin{proof}[Proof of Theorem \ref{t:quasimodes_1}]
We first assume that $\varepsilon_\hbar = O(\hbar)$. Using the normal form \eqref{conjugation-1}, we have
$$
\widehat{P}_\hbar^1 = \widehat{H}_\hbar + \varepsilon_\hbar \langle \widehat{V}_\hbar \rangle + O(\varepsilon_\hbar^{2}),
$$ 
Using Lemma \ref{concentration_minimal_set_2} with $L = V$ and $\delta_\hbar = \varepsilon_\hbar$, defining $\psi_\hbar^1(x) = U_{1,\hbar} \psi_\hbar$, where $\psi_\hbar$ is given by \eqref{e:quasimode_I}, and using that $\varepsilon_\hbar = O(\hbar)$, we have
$$
\widehat{P}_\hbar \psi_\hbar^1 = \lambda_\hbar \psi_\hbar^1 + O(\varepsilon_\hbar^2) + O(\varepsilon_\hbar \hbar) = \lambda_\hbar \psi_\hbar^1 +  O(\varepsilon_\hbar \hbar).
$$
Moreover, by \eqref{FIO-estimate} and \eqref{e:best_concentration_2}, we obtain
\begin{equation}
\label{e:concentration_sharpened}
\big \langle \psi^1_\hbar, \Op_\hbar(a) \psi_\hbar^1 \big \rangle_{L^2(\R^d)} = \frac{1}{ \Vert \chi_T \Vert_{L^2(\R)}^2} \int_\R \chi_T(s)^2 \, \langle a \rangle \circ \phi_s^{\langle V \rangle}(z_0) ds + O_T(\hbar^{1/2}).
\end{equation}
In particular,
\begin{equation}
\label{e:measure_sharpened}
\mu_T = \frac{1}{ \Vert \chi_T \Vert_{L^2(\R)}^2} \int_\R \chi_T(s)^2 \, \big( \phi_s^{\langle V \rangle} \big)_* \mu_{\omega,z_0}ds  \in \mathcal{Q}(\widehat{P}_\hbar, \varepsilon_\hbar \hbar).
\end{equation}
In the general case $\varepsilon_\hbar = O(\hbar^\alpha)$, we need to use the normal form up to higher order. We take $N$ such that $\varepsilon_\hbar^{N} = O(\hbar)$. Then we use Lemma \ref{concentration_minimal_set_2} to construct a quasimode $(\psi_\hbar)$ of width $O(\varepsilon_\hbar \hbar)$ for the truncated operator
\begin{align}
\label{e:up_to_N}
\widehat{H}_\hbar + \widehat{\mathcal{P}}_{N,\hbar} := \widehat{H}_\hbar + \varepsilon_\hbar \langle \widehat{V}_\hbar \rangle + \sum_{j=2}^N \varepsilon_\hbar^j \langle \widehat{R}_{j,\hbar} \rangle.
\end{align}
The construction is completely analogous to the one given in the proof of Lemma \ref{concentration_minimal_set_2}, up to replacing the flow $\phi_s^{\langle V \rangle}$ by the flow generated by the symbol of $\widehat{\mathcal{P}}_{N,\hbar}$, and adapting all the estimates for $\phi_s^{\langle L \rangle}$ to this new Hamiltonian system commuting with $H$. Moreover, we replace the quasi-eigenvalue $E' = \langle V \rangle(z_0)$ by
$$
E_\hbar = \langle V \rangle(z_0) + \sum_{j=2}^N \varepsilon_\hbar^{j-1} \langle R_{j}(\hbar) \rangle(z_0).
$$
Thus, defining $\psi_\hbar^N(x) = U_{N,\hbar} \psi_\hbar$, we get
$$
\widehat{P}_\hbar \psi_\hbar^N = \lambda_\hbar \psi_\hbar^N + O(\varepsilon_\hbar^{N+1}) + O(\varepsilon_\hbar \hbar) = \lambda_\hbar \psi_\hbar^N +  O(\varepsilon_\hbar \hbar).
$$
Moreover, by \eqref{e:best_concentration_2} and \eqref{FIO-estimate}, we obtain again \eqref{e:concentration_sharpened} and \eqref{e:measure_sharpened}.

It remains to use a convexity argument to finish the proof. Let $\mu_0 \in \mathcal{M}(\widehat{H}_\hbar)$ with $\supp \mu_0 \in \langle V \rangle^{-1}(E')$. By the Krein-Milman theorem, $\mu_0$ is in the convex hull, for the weak-$\star$ topology, of the set of measures of the form
$$
\mu = \sum_{j=1}^N \alpha_j \mu_{z_j,\omega},
$$
where $z_j = z_j \in H^{-1}(1) \cap \langle V \rangle^{-1}(E')$, $\mathcal{T}_\omega(z_j) \cap \mathcal{T}_\omega(z_k) = \emptyset$ for every $k \neq j$, the limit is taken in the weak-$\star$ sense, $0\leq \alpha_j \leq 1$, and $\sum_{j=1}^{N} \alpha_j = 1$. We have shown that, for every $1 \leq j \leq N$, there exists a quasimode $(\psi_{j,\hbar})$ of width $O(\varepsilon_\hbar \hbar)$ such that
$$
\big \langle \psi_{j,\hbar}, \Op_\hbar(a) \psi_{j,\hbar} \big \rangle_{L^2(\R^d)} = \frac{1}{ \Vert \chi_T \Vert_{L^2(\R)}^2} \int_\R \chi_T(s)^2 \, \langle a \rangle \circ \phi_s^{\langle V \rangle}(z_j) ds + o(1),
$$
and such that
$$
\big( \widehat{H}_\hbar + \varepsilon_\hbar  V_\hbar  \big) \psi_{j,\hbar}(x) = \lambda_\hbar \psi_{j,\hbar}(x) + O(\varepsilon_\hbar \hbar).
$$
Defining the sequence\footnote{Note that the sequence $(\varphi_\hbar^N)$ is  asymptotically normalized.}
$$
\varphi^N_{\hbar} = \sum_{j=1}^N \frac{1}{\sqrt{\alpha_j}} \psi_{j,\hbar}, \quad N \in \mathbb{N},
$$
and using \cite[Proposition 3.3]{Ger90} to calculate the cross-Wigner distributions of terms with disjoint weak-$\star$ limits, we get
\begin{align*}
\big \langle \varphi^N_{\hbar}, \Op_\hbar(a) \varphi^N_{\hbar} \big \rangle_{L^2(\R^d)} & = \frac{1}{ \Vert \chi_T \Vert_{L^2(\R)}^2} \int_\R \chi_T(s)^2 \, \sum_{j=1}^N \int_{\mathcal{T}_\omega(z_j)} \langle a \rangle \circ \phi_s^{\langle V \rangle}(z) \mu_{\omega,z_j}(dz) ds + o(1) \\[0.2cm]
& = \frac{1}{ \Vert \chi_T \Vert_{L^2(\R)}^2} \int_\R \int_{\R^{2d}} \chi_T(s)^2 \langle a \rangle \circ \phi_s^{\langle V \rangle}(z) \mu(dz) ds + o(1),
\end{align*}
and that
$$
\big( \widehat{H}_\hbar + \varepsilon_\hbar  V_\hbar  \big) \varphi^N_{\hbar}(x) = \lambda_\hbar \varphi^N_{\hbar}(x) + O(\varepsilon_\hbar \hbar).
$$
Therefore, by a diagonal extraction argument, we can define a quasimode $\psi_\hbar := \varphi_\hbar^{N_\hbar}$, for $N_\hbar \to \infty$ as $\hbar \to 0^+$, of width $O(\varepsilon_\hbar \hbar)$, with semiclassical measure given by \eqref{e:evolved_measure}. 
\end{proof}

\begin{proof}[Proof of Theorem \ref{t:improved_quasimodes}]
We just have to modify a bit the proof of Theorem \ref{t:quasimodes_1}. Take a continuous family of bump functions $(\chi_T) \subset \mathcal{C}_c^\infty(\R)$ for $T \geq T_0 > 0$ given by $\chi_T = \chi( \cdot/T)$. Let $(T_\hbar)$ be a sequence of parameters such that $T_\hbar \to +\infty$ sufficiently slowly (below the Ehrenfest time $\mathscr{T}_\hbar$ up to which the approximation \eqref{e:linear_combination_of_coherent_states} is valid). By the proof of Theorem \ref{t:quasimodes_1}, given $\mu_0 \in \mathcal{M}(\widehat{H}_\hbar)$ with $\supp \mu_0 \subset \mathcal{I}(H, \langle V \rangle)$, we can construct a sequence $(\psi_\hbar^N)$ with $\Vert \psi_\hbar^N \Vert_{L^2} = 1$ such that
\begin{equation}
\label{e:estimate_one}
\big \langle \psi_\hbar^N, \Op_\hbar(a) \psi_\hbar^N \big \rangle_{L^2(\R^d)} = \frac{1}{ \Vert \chi_{T_\hbar} \Vert_{L^2(\R)}^2} \int_\R \int_{\R^{2d}} \chi_{T_\hbar}(s)^2 \, \langle a \rangle \circ \phi_s^{\langle V \rangle} (z_0) \mu_0(dz) ds + o(1),
\end{equation}
and
\begin{equation}
\label{e:estimate_two}
\widehat{P}_\hbar \psi^N_\hbar = \lambda_\hbar \psi^N_\hbar + O(\varepsilon_\hbar^{N+1}) + c(T_\hbar,\hbar)  \varepsilon_\hbar \hbar.
\end{equation}
Since $\varepsilon_\hbar = O(\hbar^\alpha)$, we can choose $N$ sufficiently large so that $\varepsilon_\hbar^N = o(\hbar)$. Taking the limit $\hbar \to 0^+$ in \eqref{e:estimate_one} and \eqref{e:estimate_two}, we conclude that 
$$
\mathcal{A}_{\langle V \rangle}(\mu_0)  \in \mathcal{Q}(\widehat{P}_\hbar, r_\hbar),
$$
with $r_\hbar = o(\varepsilon_\hbar \hbar)$.

It remains to show the second assertion of the statement of Theorem \ref{t:improved_quasimodes} 
when $\varepsilon_\hbar \ll \hbar^{1 + \gamma(\omega)}$. Let $\mu \in \mathcal{Q}(\widehat{P}_\hbar,r_\hbar)$. By Theorem \ref{t:invariance_for_quasimodes}, it is enough to show that there exist $\cE \in\Sigma_\cH$ and $E \in \R$ such that
$$
\supp \mu \subset \mathcal{H}^{-1}(\cE) \cap \langle V \rangle^{-1}(E).
$$
We reason as follows. Using the normal form \eqref{quantum_normal_form}, we have
$$
\widehat{P}_\hbar^N = U_{N,\hbar}^*\big( \widehat{H}_\hbar + \varepsilon_\hbar \widehat{V}_\hbar \big) U_{N,\hbar} = \widehat{H}_\hbar + \varepsilon_\hbar \langle \widehat{V}_\hbar \rangle + \sum_{j=2}^N \varepsilon_\hbar^j \langle \widehat{R}_{j,\hbar} \rangle +  O(\varepsilon_\hbar^{N+1}).
$$
Then the sequence given by $\psi_\hbar^N = (U_{N,\hbar}^* \psi_\hbar)$ is a quasimode for the operator $\widehat{H}_\hbar + \widehat{\mathcal{P}}_{N,\hbar}$ given by \eqref{e:up_to_N} of width $o(\varepsilon_\hbar \hbar)$. 

By Lemma \ref{l:projection_lemma},  the mass of the sequence $(\psi_\hbar^N)$ is concentrated around the eigenspace of $\lambda_\hbar$.  Proposition \ref{p:localization} then shows that
every semiclassical measure $\mu$ associated with the sequence $(\psi_\hbar^N)$, satisfies $\supp \mu \subset \mathcal{H}^{-1}(\cE) $ for some $\cE \in \Sigma_\cH$.
Moreover, we also have that
$$
\langle \widehat{V}_\hbar \rangle \psi_\hbar^N = \frac{\Lambda_\hbar - \lambda_\hbar}{\varepsilon_\hbar} \psi^N_\hbar + o(1).
$$ 
Then there exists $E$ such that $(\Lambda_\hbar - \lambda_\hbar)/\varepsilon_\hbar \to E$, and every semiclassical measure associated with the sequence $(\psi^N_\hbar)$ is supported on $\langle V \rangle^{-1}(E)$. Since the sequences $(\psi^N_\hbar)$ and $(\psi_\hbar)$ share the same semiclassical measure by \eqref{FIO-estimate}, the proof is complete. 
\end{proof}

Following the same lines of the previous proof, we get the following converse for Theorem \ref{t:general_invariance}:

\begin{corol}
\label{c:corol_general_invariance}
Let us assume the hypothesis of  Theorem \ref{t:general_invariance}. If $H^{-1}(1) = \mathcal{I}(H,\langle L \rangle)$ and $kj > 1+\gamma(\omega)$, then for every sequence $(r_\hbar) \subset \R_+$ satisfying $r_\hbar = o(\hbar^{kj +1})$, every $\mu \in \mathcal{Q}(\widehat{P}_\hbar, r_\hbar)$ satisfies:
$$
\supp \mu \subset \mathcal{H}^{-1}(\cE)  \cap \langle L \rangle^{-1}(E),
$$
for some $\cE \in \Sigma_\cH$ and some $E \in \R$.
\end{corol}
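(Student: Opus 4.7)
The plan is to mirror the converse direction of the proof of Theorem \ref{t:improved_quasimodes}, but to push the quantum Birkhoff normal form of Proposition \ref{averaging-lemma} to order $j$---the first order at which $\langle L\rangle$ appears as a non-scalar correction, by the hypotheses of Theorem \ref{t:general_invariance}. With $N$ chosen large enough, the conjugation gives
\begin{equation*}
\widehat{P}_\hbar^N := U_{N,\hbar}^*\,\widehat{P}_\hbar\, U_{N,\hbar} = G_\hbar + \hbar^{kj}\langle \widehat{L}_\hbar\rangle + O_{\mathcal{L}(L^2)}(\hbar^{kj+1}),
\end{equation*}
where $G_\hbar := \widehat{H}_\hbar + \sum_{i=1}^{j-1}\hbar^{ki} f_i(\Op_\hbar(\cH_1),\ldots,\Op_\hbar(\cH_{d_\omega}))$ is a bounded smooth function of the commuting family $(\Op_\hbar(\cH_n))_n$. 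By \eqref{FIO-estimate}, the sequence $\psi_\hbar^N := U_{N,\hbar}^*\psi_\hbar$ shares the semiclassical measure $\mu$ with $(\psi_\hbar)$ and is a quasimode of $\widehat{P}_\hbar^N$ of width $o(\hbar^{kj+1})$.

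Next I would exploit that $G_\hbar$ and $\langle \widehat{L}_\hbar\rangle = \Op_\hbar(\langle L\rangle)$ both commute with $\widehat{H}_\hbar$ and with each $\Op_\hbar(\cH_n)$---the latter by exact Egorov applied to the quadratic $\cH_n$. The key observation is that within a single eigenspace $V_\Lambda$ of $\widehat{H}_\hbar$, the condition $\omega\cdot(k-k')=0$ forces $k-k'\in\Lambda_\omega$, hence $\cE_k = \cE_{k'}$; thus $G_\hbar$ acts on $V_\Lambda$ as the scalar $\Lambda+\sum_{i=1}^{j-1}\hbar^{ki} f_i(\cE_\Lambda)$, and the eigenvalue clusters of $G_\hbar$ coincide with those of $\widehat{H}_\hbar$. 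Since the spacing of $\widehat{H}_\hbar$ near $1$ is $\gtrsim \hbar^{1+\gamma(\omega)}$ by \eqref{e:spacing}, I would then adapt the proof of Lemma \ref{l:projection_lemma} with $G_\hbar$ in place of $\widehat{H}_\hbar$ and $\hbar^{kj}$ in place of $\varepsilon_\hbar$: the hypothesis $kj > 1+\gamma(\omega)$ allows one to choose $\delta_\hbar$ with $\hbar^{kj}\ll \delta_\hbar \ll \hbar^{1+\gamma(\omega)}$, and the spectral-projector argument produces normalized joint eigenvectors $\Psi_\hbar$ of $(\Op_\hbar(\cH_n))_n$ with joint eigenvalues $\cE_\hbar$ such that $G_\hbar(\cE_\hbar)=:\Lambda_\hbar$ and
\begin{equation*}
\psi_\hbar^N = \Psi_\hbar + o(1)\text{ in } L^2, \qquad \langle \widehat{L}_\hbar\rangle\,\Psi_\hbar = \frac{\lambda_\hbar-\Lambda_\hbar}{\hbar^{kj}}\,\Psi_\hbar + o(1).
\end{equation*}

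To conclude, I extract a subsequence so that $\cE_\hbar\to \cE\in\Sigma_\cH$ and $(\lambda_\hbar-\Lambda_\hbar)/\hbar^{kj}\to E\in\R$; applying the result of \cite{ArMa20} (as invoked in Proposition \ref{p:localization}) to the sequence of joint eigenvectors $(\Psi_\hbar)$ yields $\supp\mu\subseteq \cH^{-1}(\cE)$, and repeating verbatim the last paragraph of the proof of Theorem \ref{t:improved_quasimodes}---with $(\Psi_\hbar)$ playing the role of the quasi-eigenvector of $\langle\widehat{V}_\hbar\rangle$---delivers $\supp\mu\subseteq\langle L\rangle^{-1}(E)$. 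Combined with the invariance from Theorem \ref{t:general_invariance}, the stated localization follows, since $\mu$ is the common semiclassical measure of $(\psi_\hbar)$, $(\psi_\hbar^N)$, and $(\Psi_\hbar)$.

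The main obstacle is the spectral-gap lower bound needed to run the projector argument for $G_\hbar$ when $k\le 1+\gamma(\omega)$: the Lipschitz corrections $\hbar^{ki}[f_i(\cE_\Lambda)-f_i(\cE_{\Lambda'})]$ are then \emph{a priori} of the same order as or larger than the Diophantine spacing $\hbar^{1+\gamma(\omega)}$ between distinct $\widehat{H}_\hbar$-clusters, and could in principle shrink the spacing of $G_\hbar$. The rescue is that these corrections depend on the cluster label only through the discrete invariant $\cE_\Lambda\in\hbar\,\IZ^{d_\omega}$, so a Taylor-type argument coupled with the Diophantine lower bound on $|\omega\cdot(k-k')|$ shows that the effective spacing of $G_\hbar$ between distinct clusters remains $\gtrsim \hbar^{1+\gamma(\omega)}$ on any fixed compact energy window---enough to beat $\hbar^{kj}$ exactly because $kj>1+\gamma(\omega)$.
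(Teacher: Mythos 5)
Your overall plan is the same one the paper has in mind when it writes ``following the same lines of the previous proof'': push the QBNF of Proposition \ref{averaging-lemma} to order $j$, observe that $\psi_\hbar^N := U_{N,\hbar}^*\psi_\hbar$ becomes a quasimode of $G_\hbar + \hbar^{kj}\langle\widehat{L}_\hbar\rangle$ of width $o(\hbar^{kj+1})$, run a projection argument, and finish with \cite{ArMa20} and compactness. Your observation that $\omega\cdot(k-k')=0$ forces $\nu_n\cdot(k-k')=0$ for all $n$, so that $G_\hbar$ acts as the scalar $\Lambda + \sum_i\hbar^{ki}f_i(\cE_\Lambda)$ on each $\widehat H_\hbar$-eigenspace and produces \emph{joint} eigenvectors of the $\Op_\hbar(\cH_n)$, is correct and is exactly what makes the result of \cite{ArMa20} applicable in the last step. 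This is considerably more explicit than what the paper writes, and it is the right structure.

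The place where your argument actually rests on an unproved assertion is the spectral-gap claim for $G_\hbar$, which you yourself flag as ``the main obstacle'' and then dismiss with a ``Taylor-type argument.'' That dismissal is not convincing. The issue is quantitative: the corrections to the $\widehat H_\hbar$-spectrum are bounded by $C\hbar^k\,|\cE_\Lambda-\cE_{\Lambda'}|$, and since $\cE_\Lambda$ ranges over an $O(1)$ neighbourhood in $\R^{d_\omega}$ (only the linear combination $v\cdot\cE_\Lambda$ is pinned near $1$; the transverse directions are free), $|\cE_\Lambda-\cE_{\Lambda'}|$ can itself be of order one. Then the correction is of size $\hbar^k$, whereas the base Diophantine spacing $|\Lambda-\Lambda'|=\hbar|\omega\cdot(k-k')|$ attains its minimum $\sim\hbar^{1+\gamma(\omega)}$ precisely for $|k-k'|\sim\hbar^{-1}$, i.e.\ $|\cE_\Lambda-\cE_{\Lambda'}|\sim 1$. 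In the regime $k\le 1+\gamma(\omega)$ (which the hypothesis $kj>1+\gamma(\omega)$ does \emph{not} exclude once $j\ge 2$ and $d_\omega\ge 2$), the correction is therefore of the same order as, or larger than, the gap, and a Taylor expansion of $f_i$ does not retrieve a $\gtrsim\hbar^{1+\gamma(\omega)}$ lower bound on the $G_\hbar$-spacing. What one can show from the Diophantine bound is only the weaker implication that two $\cE$'s with $|F_h(\cE)-F_h(\cE')|\lesssim\hbar^{kj}$ must satisfy $|\cE-\cE'|\gtrsim\hbar^{1-k/(1+\gamma(\omega))}$, which leaves room for many near-degenerate clusters inside a fixed energy window. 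Thus the projector-on-$G_\hbar$ argument, as you state it, is incomplete; either one needs the stronger hypothesis $k>1+\gamma(\omega)$ (in which case the whole correction $\hbar^k\widehat F'_\hbar+\hbar^{kj}\widehat L_\hbar$ can simply be treated as an $O(\hbar^k)=o(\hbar^{1+\gamma(\omega)})$ commuting perturbation of $\widehat H_\hbar$ and one applies Lemma \ref{l:projection_lemma} verbatim), or one must supply an actual proof that the modified spacing survives, which you have not done. This is the genuine gap in your proposal; to be fair, it is also the point the paper glosses over with its one-sentence reference.
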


\appendix

\section{A Quantum Birkhoff Normal Form}
\label{averaging_method}

The averaging method and quantum Birkhoff normal forms are widely used in classical and quantum mechanics (see \cite{Arnold89,Ver96,Gui81,Mos70,Ur85, UV08,WeinsteinZoll}) when it comes to the study of perturbations of completely integrable systems. Roughly speaking, the main idea consists in averaging the perturbation along the orbits of the completely integrable system to a given order, in such a way that the perturbed Hamiltonian, up to this order, commutes with the unperturbed one. 

In the particular case of quantum harmonic oscillators, two references particularly close to our purposes are \cite{CharVNgoc} for the homogeneous periodic oscillator, and \cite{Sj92} for the non-resonant case. In the context of reducibility of polynomial time-dependent perturbations of the harmonic oscillator, see also \cite{Bam17}.

Here we need a variant of this type of result, that covers all intermediate cases. Since we work in the semiclassical limit as the eginvalues of $\widehat{P}_\hbar$ verify $\lambda_\hbar \to 1$, then our quasimodes concentrate in phase-space on the level set $H^{-1}(1)$. This means that we can not use Taylor expansions of our symbols near zero, and then we need to use a Diophantine hypothesis to solve the cohomological equations which will appear at each step of the normal-form construction.

\subsection{Cohomological equations}\label{cohomological-equations}

One of the technical difficulties that we will find in the process of averaging the perturbation $V$ by the classical flow of the harmonic oscillator, will be to deal with cohomological equations \cite[Sec. 2.5]{Llav03} as the following:
\begin{equation}
\label{cohomological}
\{ H , f \} = g,
\end{equation}
where $g \in \mathcal{C}^\infty(\R^{2d})$ is a smooth function such that $\langle g \rangle = 0$. The goal is to solve this equation in $\mathcal{C}^\infty(\R^{2d})$. 

Write for $f \in \mathcal{C}^\infty(\R^{2d})$:
\begin{equation}
\label{e:harmonic_fourier-decomposition}
f\circ\Phi_\tau^\cH=\frac{1}{(2\pi)^{d_\omega}}\sum_{k \in \mathbb{Z}^{d_\omega}}f_k e^{ik\cdot\tau}  ,\quad  f_k:=\int_{\T^{d_\omega}} f \circ\Phi_\tau^\cH e^{-ik \cdot \tau} d\tau \; \text{ for }\; k\in\IZ^{d_\omega}
\end{equation}
Observe that if $f$ is a solution to (\ref{cohomological}), then so is $f + \lambda \langle f \rangle$ for any $\lambda \in \mathbb{R}$, since $\{ H, \langle f \rangle \} = 0$. Thus we can try to solve the equation for $\langle f \rangle = 0$ fixed, imposing $\langle f \rangle=0$.
Writing down
$$
\{ H, f \} = \frac{d}{dt} \left(f \circ \Phi_{tv} \right) \vert_{t=0} = \frac{1}{(2\pi)^{d_\omega}} \sum_{k \in \mathbb{Z}^{d_\omega} \setminus \{0\}} (ik \cdot v) \, f_k =  \frac{1}{(2\pi)^{d_\omega}} \sum_{k \in \mathbb{Z}^d} g_k,
$$
we obtain that the solution of (\ref{cohomological}) is given (at least formally) by
\begin{equation}
\label{solution-cohomological}
f = \frac{1}{(2\pi)^{d_\omega}} \sum_{k \in \mathbb{Z}^{d_\omega} \setminus \{0\}} \frac{1}{ik\cdot v} \, g_k.
\end{equation}
It is not difficult to see that, unless we impose some quantitive restriction on how fast $\vert vk \cdot v \vert^{-1}$ can grow, the solutions given formally by (\ref{solution-cohomological}) may fail to be even distributions (see for instance \cite[Ex. 2.16.]{Llav03}). But if $v$ is Diophantine (which is always the case when $\omega$ is partially Diophantine \eqref{e:partially_diophantine2}), 
and $g \in \mathcal{C}^\infty(\R^{2d})$ is such that $\langle g \rangle = 0$, then \textnormal{(\ref{solution-cohomological})} defines a smooth solution $f \in \mathcal{C}^\infty(\R^{2d})$ of \textnormal{(\ref{cohomological})}.

Finally, in the periodic case (assuming $\omega = (1, \ldots , 1)$ for simplicity), the solution to the cohomological equation \textnormal{(\ref{cohomological})} is given by the explicit formula
\begin{equation}
\label{e:solution_periodic_case}
f = \frac{-1}{2\pi} \int_0^{2\pi} \int_0^t  g \circ \phi_s^H  \, ds \, dt,
\end{equation}
provided that $\langle f \rangle = \langle g \rangle = 0$.

\subsection{Quantum Birkhoff normal form}
We finally give the normal form for the operator $\widehat{H}_\hbar + \varepsilon_\hbar \widehat{V}_\hbar$ assuming that $\omega$ is partially Diophantine.
\begin{propop}
\label{averaging-lemma}
If $\omega$ is partially Diophantine then, for every $N \in \mathbb{N}$, there exists a sequence of unitary operators $(U_{N,\hbar})$ on $L^2(\R^d)$ such that 
\begin{equation}
\label{FIO-estimate}
\left \Vert U_{N,\hbar} \Op_\hbar(a) U_{N,\hbar}^* - \Op_\hbar(a) \right \Vert_{\mathcal{L}(L^2)} = O_{\mathcal{L}(L^2)}(\varepsilon_\hbar), \quad \text{for all } a \in \mathcal{C}_c^\infty(\R^{2d}),
\end{equation}
and
\begin{equation}
\label{quantum_normal_form}
\widehat{P}_\hbar^N := U_{N,\hbar}^* \big( \widehat{H}_\hbar + \varepsilon_\hbar \widehat{V}_\hbar \big) U_{N,\hbar} =  \widehat{H}_\hbar + \sum_{j=1}^N \varepsilon_\hbar^j \langle \widehat{R}_{j,\hbar} \rangle + O_{\mathcal{L}(L^2)}(\varepsilon_\hbar^{N+1}),
\end{equation}
where $\widehat{R}_{1,\hbar} = \widehat{V}_\hbar$, and $\widehat{R}_{j,\hbar}$ are $L^2$-bounded semiclassical pseudodifferential operators that do not depend on $N$.
\end{propop}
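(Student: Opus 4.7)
The plan is to prove Proposition \ref{averaging-lemma} by an iterative cohomological construction, building $U_{N,\hbar}$ as an ordered product $U_{1,\hbar} \cdots U_{N,\hbar}$ of unitaries of the form $U_{j,\hbar} := \exp(-i\,\varepsilon_\hbar^{j}\widehat{W}_{j}/\hbar)$ where $\widehat{W}_j := \Op_\hbar(W_j)$ for a real-valued symbol $W_j \in S^0(\mathbb{R}^{2d})$ to be determined; self-adjointness of $\widehat{W}_j$ ensures unitarity of each factor. Setting $\widehat{P}_\hbar^{(j)} := (U_{1,\hbar}\cdots U_{j,\hbar})^* \widehat{P}_\hbar (U_{1,\hbar}\cdots U_{j,\hbar})$, the induction hypothesis at step $j$ is
\begin{equation*}
\widehat{P}_\hbar^{(j-1)} = \widehat{H}_\hbar + \sum_{i=1}^{j-1} \varepsilon_\hbar^i \langle \widehat{R}_{i,\hbar} \rangle + \varepsilon_\hbar^{j} \widehat{R}_{j,\hbar} + O_{\mathcal{L}(L^2)}(\varepsilon_\hbar^{j+1}),
\end{equation*}
where each $\widehat{R}_{i,\hbar} = \Op_\hbar(R_{i,\hbar})$ is an $L^2$-bounded self-adjoint semiclassical pseudodifferential operator with symbol in $S^0$ uniformly in $\hbar$ and admitting an asymptotic $\hbar$-expansion.

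To execute step $j$, I would exploit the fact that $H$ is a quadratic symbol, which makes Egorov exact: $\tfrac{i}{\hbar}[\widehat{W}_j,\widehat{H}_\hbar] = \Op_\hbar(\{W_j,H\})$. Expanding the conjugation via Duhamel/Baker--Campbell--Hausdorff then yields
\begin{equation*}
U_{j,\hbar}^* \widehat{P}_\hbar^{(j-1)} U_{j,\hbar} = \widehat{P}_\hbar^{(j-1)} + \varepsilon_\hbar^{j}\Op_\hbar(\{W_j, H\}) + O_{\mathcal{L}(L^2)}(\varepsilon_\hbar^{j+1}),
\end{equation*}
since each further iterated commutator carries either an additional factor of $\varepsilon_\hbar^{j}$ or an additional factor of $\hbar$ (from the general identity $[\Op_\hbar(a),\Op_\hbar(b)] = O_{\mathcal{L}(L^2)}(\hbar)$ for $a,b \in S^0$), both of which are absorbed in the remainder. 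Choosing $W_j$ to satisfy the cohomological equation
\begin{equation*}
\{H, W_j\} = R_{j,\hbar} - \langle R_{j,\hbar} \rangle
\end{equation*}
cancels the non-averaged part at order $\varepsilon_\hbar^{j}$; Proposition \ref{p:quantum_average} then identifies $\Op_\hbar(\langle R_{j,\hbar}\rangle)$ with $\langle \widehat{R}_{j,\hbar}\rangle$ modulo errors absorbed in the remainder, and the new order-$(j{+}1)$ operator $\widehat{R}_{j+1,\hbar}$ is read off from the remaining Duhamel terms.

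The main obstacle is the solvability of this cohomological equation within $S^0(\mathbb{R}^{2d})$, uniformly in $\hbar$. I would follow the procedure of Section \ref{cohomological-equations}: expanding in Fourier series on $\mathbb{T}^{d_\omega}$ as in \eqref{e:harmonic_fourier-decomposition}, the formal solution reads
\begin{equation*}
W_j \circ \Phi_\tau^{\mathcal{H}} = \frac{1}{(2\pi)^{d_\omega}} \sum_{k \in \mathbb{Z}^{d_\omega}\setminus \{0\}} \frac{(R_{j,\hbar} - \langle R_{j,\hbar}\rangle)_k}{i\,k\cdot v}\, e^{ik\cdot\tau}.
\end{equation*}
The partial Diophantine estimate \eqref{e:partially_diophantine2} controls the small denominators by $|k\cdot v|^{-1} \leq (\varsigma')^{-1}|k|^{\gamma}$, a polynomial loss that is comfortably compensated by the rapid decay in $k$ of the Fourier coefficients of the smooth map $\tau \mapsto R_{j,\hbar}\circ\Phi_\tau^{\mathcal{H}}$. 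Each $S^0$-seminorm of $W_j$ is therefore dominated by finitely many seminorms of $R_{j,\hbar}$, and running the same argument term by term in the semiclassical expansion produces the full $\hbar$-asymptotics of $W_j$ in $S^0$.

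Finally, estimate \eqref{FIO-estimate} follows from the Heisenberg identity
\begin{equation*}
U_{j,\hbar}^* \Op_\hbar(a) U_{j,\hbar} - \Op_\hbar(a) = -\frac{i}{\hbar} \int_0^{\varepsilon_\hbar^{j}} e^{it\widehat{W}_j/\hbar}\,[\widehat{W}_j, \Op_\hbar(a)]\, e^{-it\widehat{W}_j/\hbar}\, dt,
\end{equation*}
together with the Calder\'on--Vaillancourt bound $\|[\widehat{W}_j, \Op_\hbar(a)]\|_{\mathcal{L}(L^2)} = O(\hbar)$: the factor $1/\hbar$ is exactly cancelled, each step contributes $O(\varepsilon_\hbar^{j})$, and the full product $U_{N,\hbar}$ yields \eqref{FIO-estimate} with an $O(\varepsilon_\hbar)$ remainder dominated by the $j=1$ term.
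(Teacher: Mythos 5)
Your proposal is correct and follows essentially the same route as the paper: build $U_{N,\hbar}$ as a product of unitaries $\exp(-i\varepsilon_\hbar^{j}\widehat{W}_j/\hbar)$, exploit exact Egorov for the quadratic $H$ to reduce each step to the cohomological equation $\{H,W_j\}=R_{j,\hbar}-\langle R_{j,\hbar}\rangle$, solve it by Fourier expansion along $\Phi_\tau^{\cH}$ using the Diophantine control on small denominators, and obtain \eqref{FIO-estimate} from the commutator bound $[\widehat{W}_j,\Op_\hbar(a)]=O_{\cL(L^2)}(\hbar)$. The paper's proof is slightly more explicit in recording the remainder operators (e.g.\ the formula \eqref{e:first_remainder} for $\widehat{R}_{j,\hbar}^1$ and the integral Taylor remainder $\widehat{T}_\hbar$), which is what justifies that the $\widehat{R}_{j,\hbar}$ are independent of $N$; your ``read off from the Duhamel terms'' carries the same content but would benefit from making this $N$-independence explicit, since it is part of the statement being proved.
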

\begin{remark}
This statement can be simplified when $\varepsilon_\hbar=h^k$ with $k\geq 1$ an integer.
Formula \eqref{quantum_normal_form} then reads:
\begin{equation}\label{e:int_qbnf}
    U_{N,\hbar}^* \big( \widehat{H}_\hbar + \varepsilon_\hbar \widehat{V}_\hbar \big) U_{N,\hbar} =  \widehat{H}_\hbar + \hbar^k\sum_{j=0}^{k(N-1)} \hbar^j\langle \Op_{\hbar}(r_j) \rangle + O_{\mathcal{L}(L^2)}(\hbar^{kN+1}),
\end{equation}
for some $r_j\in S^0(\R^{2d})$, $j=0,\hdots k(N-1)$, with $r_0=V$.
If, in addition, $\omega$ is Diophantine (in particular, $d_\omega = d$), then \eqref{quantum_normal_form} can also be written as:
\begin{equation}
\label{e:normal_form_diphantine_case}
    U_{N,\hbar}^* \big( \widehat{H}_\hbar + \varepsilon_\hbar \widehat{V}_\hbar \big) U_{N,\hbar} =  \widehat{H}_\hbar + \hbar^k\sum_{j=0}^{kN-1} \hbar^j g_j(\Op_\hbar(H_{1}),\hdots, \Op_\hbar(H_d)) + O_{\mathcal{L}(L^2)}(\hbar^{k(N+1)}),
\end{equation}
for some $g_j\in S^0(\R^d)$, $j=0,\hdots, k(N-1)$.
\end{remark}
\begin{proof}
We fix $N \geq 1$. Let $F_1 \in S^0(\R^{2d})$, and denote its Weyl quantization by $\widehat{F}_{1,\hbar} = \Op_\hbar(F_1)$. We define the following unitary operator:
\begin{equation}
\label{FIO-1}
\mathcal{U}_1(t) := \exp \left[ -\frac{it\varepsilon_\hbar}{\hbar}  \widehat{F}_{1,\hbar} \right] =  \sum_{j=0}^\infty \frac{1}{j!} \left(-\frac{it\varepsilon_\hbar}{\hbar}  \widehat{F}_{1,\hbar}\right)^j, \quad t \in [0,1],
\end{equation}
where the series converges in the $\mathcal{L}(L^2)$-norm provided that $\widehat{F}_{1,\hbar}$ is a bounded operator on $L^2(\R^d)$. We denote $\mathcal{U}_1 = \mathcal{U}_1(1)$ and
conjugate $\widehat{P}_\hbar = \widehat{P}^0_\hbar := \widehat{H}_\hbar + \varepsilon_\hbar \widehat{V}_\hbar$ by $\mathcal{U}_1$, obtaining:
\begin{align*}
\widehat{P}_\hbar^1 := \mathcal{U}_1^*  \widehat{P}^0_\hbar \, \mathcal{U}_1 & = \widehat{H}_\hbar + \varepsilon_\hbar \widehat{V}_\hbar + \sum_{j=1}^N \frac{\varepsilon_\hbar^j}{j!} \left(\frac{i}{\hbar} \right)^j  \Ad^j_{\widehat{F}_{1,\hbar}} (\widehat{H}_\hbar) \\[0.2cm]
 & \quad + \sum_{j=1}^{N-1} \frac{\varepsilon_\hbar^{j+1}}{j!} \left(\frac{i}{\hbar} \right)^j  \Ad^j_{\widehat{F}_{1,\hbar}} (\widehat{V}_\hbar) + \varepsilon_\hbar^{N+1} \widehat{T}_\hbar,
\end{align*}
where $\Ad^j_P(Q) := [P, \Ad^{j-1}_P(Q)]$, and $\Ad^0_P(Q) = Q$. The Taylor remainder $\widehat{T}_\hbar$ is given by
\begin{align*}
\widehat{T}_\hbar &  = \int_0^1 \frac{(1-t)^N}{N!} \left( \frac{i}{\hbar} \right)^{N+1} \mathcal{U}_1(t)^* \Ad^{N+1}_{\widehat{F}_{1,\hbar}}(  \widehat{H}_\hbar ) \mathcal{U}_1(t) dt \\[0.2cm]
 & \quad + \int_0^1 \frac{(1-t)^{N-1}}{(N-1)!} \left( \frac{i}{\hbar} \right)^{N} \mathcal{U}_1(t)^* \Ad^{N}_{\widehat{F}_{1,\hbar}}(  \widehat{V}_\hbar ) \mathcal{U}_1(t) dt .
\end{align*}
Let us now set $F_1$ to be the solution of the cohomological equation
\begin{equation}
\label{cohomological1}
\frac{i}{\hbar} [\widehat{F}_{1,\hbar}, \widehat{H}_\hbar ] = \langle \widehat{V}_\hbar \rangle - \widehat{V}_\hbar,
\end{equation}
where the quantum average $\langle \widehat{V}_\hbar \rangle$ (recall Proposition \ref{p:quantum_average}) is given by
$$
\langle \widehat{V}_\hbar \rangle = \lim_{T \to \infty} \frac{1}{T} \int_0^T e^{-i\frac{t}{\hbar}\widehat{H}_\hbar} \, \widehat{V}_\hbar \, e^{i\frac{t}{\hbar}\widehat{H}_\hbar} dt = \Op_\hbar(\langle V \rangle).
$$
Note that the commutator in the left is equal to $\Op_\hbar(\{F_1,H \})$, since $H$ is a polynomial of degree two, hence equation \eqref{cohomological1} at symbol level is just
\begin{equation}
\label{cohomological-classic1}
\{ H, F_1 \} = V - \langle V \rangle.
\end{equation}
Then a solution $F_1 \in S^0(\R^{2d})$ with $\langle F_1 \rangle =0$ exists provided that $\omega$ is partially Diophantine (see Section \ref{cohomological-equations}):
\begin{equation}
\label{solution-cohomological-1}
F_1 =  \frac{1}{(2\pi)^{d_\omega}}\sum_{k \in \mathbb{Z}^{d_\omega} \setminus \{0\}} \frac{V_k}{ik \cdot \omega}, \quad V_k := \int_{\mathbb{T}^{d_\omega}} V \circ \Phi_\tau^\cH e^{-ik \cdot \tau} d\tau.
\end{equation}
In the periodic case, $\omega = (1, \ldots , 1)$, $F_1$ has the simpler expression
\begin{equation}
\label{solution-cohomological-2}
F_1 = -\frac{1}{2\pi} \int_0^{2\pi} \int_0^t (V - \langle V \rangle) \circ \phi^H_s \, ds \, dt,
\end{equation}
given by  \eqref{e:solution_periodic_case}. Thus
\begin{equation}
\label{conjugation-1}
\widehat{P}_\hbar^1 = \mathcal{U}_1^* \, \widehat{P}_\hbar^0 \, \mathcal{U}_1 = \widehat{H}_\hbar + \varepsilon_\hbar \langle \widehat{V}_\hbar \rangle + \sum_{j=2}^N \varepsilon_\hbar^j \widehat{R}_{j,\hbar}^1 +  \varepsilon_\hbar^{N+1} \widehat{T}_\hbar.
\end{equation}
The remainder terms $\widehat{R}_{j,\hbar}^1$ in (\ref{conjugation-1}) can be computed explicitely:
\begin{equation}
\label{e:first_remainder}
\widehat{R}_{j,\hbar}^1 = \frac{1}{j!} \left( \frac{i}{\hbar} \right)^{j-1} \Ad_{\widehat{F}_{1,\hbar}}^{j-1} \left( \langle \widehat{V}_\hbar \rangle + (j-1) \widehat{V}_\hbar \right), \quad j=2, \ldots , N.
\end{equation}
Using equation (\ref{cohomological1}), one can also deduce the following formula for $\widehat{T}_\hbar$:
\begin{align*}
\widehat{T}_\hbar  = \int_0^1 \frac{(1-t)^{N-1}}{N!} \left( \frac{i}{\hbar} \right)^{N} \mathcal{U}_1(t)^* \Ad^{N}_{\widehat{F}_{1,\hbar}}\Big( (1-t)\langle \widehat{V}_\hbar \rangle + (N-1 - t) \widehat{V}_\hbar  \Big) \mathcal{U}_1(t) dt.
\end{align*}
Moreover, by the symbolic calculus for Weyl pseudodifferential operators, the Calderón-Vaillancourt theorem and the fact that $\mathcal{U}_1(t)$ is unitary, we have $\widehat{T}_\hbar = O_{\mathcal{L}(L^2)}(1)$. Iterating this process, we obtain $\widehat{F}_\hbar^j$ and $R_{k,\hbar}^j$ for $2 \leq j \leq N$ and $j \leq k \leq N$ so that, defining $\mathcal{U}_j = \exp[ - i\frac{t \varepsilon_\hbar^j \widehat{F}_{j,\hbar}}{\hbar}]$,  $U_{N,\hbar} = \mathcal{U}_1 \cdots \mathcal{U}_N$, and $R_{j,\hbar} := R_{j,\hbar}^j$, we obtain \eqref{quantum_normal_form}. In particular, 
$$
\widehat{R}_{2,\hbar} =  \frac{i}{2\hbar} [\widehat{F}_{1,\hbar}, \langle \widehat{V}_\hbar \rangle + \widehat{V}_\hbar],
$$
and, in the case $\omega = (1, \ldots, 1)$, we obtain that $\langle \widehat{R}_{2,\hbar} \rangle = \Op_\hbar(\langle L\rangle) + O(\hbar)$ where $\langle L \rangle$ is given by \eqref{e:simpler_form}.

On the other hand, (\ref{FIO-estimate}) holds since
$$
\widehat{U}_{N,\hbar} \Op_\hbar(a) \widehat{U}_{N,\hbar}^* = \Op_\hbar(a) - \frac{i\varepsilon_\hbar}{\hbar}[\widehat{F}_{1,\hbar}, \Op_\hbar(a)] +O_{\mathcal{L}(L^2)}(\varepsilon_\hbar^2),
$$
and $[\widehat{F}_{1,h}, \Op_\hbar(a)] = O_{\mathcal{L}(L^2)}(\hbar)$ for all $a \in \mathcal{C}_c^\infty(\R^{2d})$.

Finally, to show \eqref{e:normal_form_diphantine_case} when the vector of frequencies $\omega$ is Diophantine, we just notice that, by point (i) of Proposition \ref{p:quantum_average}, for every $0 \leq j \leq k(N-1)$, there exists $f_{j,\hbar} \in S^0(\R^d)$ such that
$$
\langle r_j \rangle = f_{j,\hbar}(\Op_\hbar(H_1), \ldots, \Op_\hbar(H_d)),
$$
and moreover $f_{j,\hbar} = f_j + O(\hbar)$, where $f_j$ is given by point (i) of Proposition \ref{p:av_prop}. Regrouping the terms of same order $\hbar^j$, we obtain \eqref{e:normal_form_diphantine_case}.
\end{proof}


\begin{thebibliography}{10}

\bibitem{An_Fer_Mac15}
N.~Anantharaman, C.~Fermanian-Kammerer, and F.~Maci{\`a}.
\newblock Semiclassical completely integrable systems: {L}ong-time dynamics and
  observability via two-microlocal {W}igner measures.
\newblock {\em Amer. J. Math.}, 137(3):577--638, 2015.

\bibitem{AnL}
N.~Anantharaman and M.~L\'eautaud.
\newblock Sharp polynomial decay rates for the damped wave equation on the
  torus.
\newblock {\em Anal. PDE}, 7(1):159--214, 2014.

\bibitem{An_Mac14}
N.~Anantharaman and F.~Maci{\`a}.
\newblock Semiclassical measures for the {S}chr\"odinger equation on the torus.
\newblock {\em J. Eur. Math. Soc. (JEMS)}, 16(6):1253--1288, 2014.

\bibitem{Arnaiz_tesis}
V.~Arnaiz.
\newblock Semiclassical measures and asymptotic distribution of eigenvalues for
  quantum {KAM} systems.
\newblock {\em PhD. Dissertation. Universidad Autónoma de Madrid}, 2018.

\bibitem{Ar20}
V.~Arnaiz.
\newblock Construction of quasimodes for non-selfadjoint operators via
  propagation of hagedorn wave-packets.
\newblock {\em Preprint. arXiv:2010.14967}, 2020.

\bibitem{Ar18}
V.~Arnaiz.
\newblock Spectral stability and semiclassical measures for renormalized {KAM}
  systems.
\newblock {\em Nonlinearity}, 33:2562--2591, 2020.

\bibitem{ArMa20}
V.~Arnaiz and F.~Maci\`a.
\newblock Localization and delocalization of eigenmodes of harmonic
  oscillators.
\newblock {\em Proc. Amer. Math. Soc.}, 150(5):2195--2208, 2022.

\bibitem{Ar_Riv18}
V.~Arnaiz and G.~Rivière.
\newblock Semiclassical asymptotics for non-selfadjoint harmonic oscillators.
\newblock {\em Pure and Applied Analysis}, 2(2):427--445, 2020.

\bibitem{Arnold89}
V.~Arnold.
\newblock {\em Mathematical methods of classical mechanics}.
\newblock Graduate texts in mathematics. Springer-Verlag, New York, 1989.

\bibitem{AM:10}
D.~Azagra and F.~Maci{\`a}.
\newblock Concentration of symmetric eigenfunctions.
\newblock {\em Nonlinear Anal.}, 73(3):683--688, 2010.

\bibitem{BabichBuldyrev}
V.~M. Babi{\v{c}} and V.~S. Buldyrev.
\newblock {\em Short-wavelength diffraction theory}, volume~4 of {\em Springer
  Series on Wave Phenomena}.
\newblock Springer-Verlag, Berlin, 1991.
\newblock Asymptotic methods, Translated from the 1972 Russian original by E.
  F. Kuester.

\bibitem{Bam17}
D.~Bambusi, B.~Gr\'{e}bert, A.~Maspero, and D.~Robert.
\newblock Reducibility of the quantum harmonic oscillator in {$d$}-dimensions
  with polynomial time-dependent perturbation.
\newblock {\em Anal. PDE}, 11(3):775--799, 2018.

\bibitem{Bour13}
J.~Bourgain, N.~Burq, and M.~Zworski.
\newblock Control for {S}chrödinger operators on 2-tori: {R}ough potentials.
\newblock 15:1597--1628, 01 2013.

\bibitem{BurqZworski11}
N.~Burq and M.~Zworski.
\newblock Control for {S}chr\"{o}dinger operators on tori.
\newblock {\em Math. Res. Lett.}, 19(2):309--324, 2012.

\bibitem{CharVNgoc}
L.~Charles and S.~V\~{u}~Ng\d{o}c.
\newblock Spectral asymptotics via the semiclassical {B}irkhoff normal form.
\newblock {\em Duke Math. J.}, 143(3):463--511, 2008.

\bibitem{CdV:77}
Y.~Colin~de Verdi\`ere.
\newblock Quasi-modes sur les vari\'{e}t\'{e}s {R}iemanniennes.
\newblock {\em Invent. Math.}, 43(1):15--52, 1977.

\bibitem{Ver96}
Y.~Colin~de Verdi{\`e}re.
\newblock La m\'ethode de moyennisation en m\'ecanique semi-classique.
\newblock In {\em Journ\'ees ``\'{E}quations aux {D}\'eriv\'ees {P}artielles''
  ({S}aint-{J}ean-de-{M}onts, 1996)}, pages Exp.\ No.\ V, 11. \'Ecole
  Polytech., Palaiseau, 1996.

\bibitem{Robert97}
M.~Combescure and D.~Robert.
\newblock Semiclassical spreading of quantum wave packets and applications near
  unstable fixed points of the classical flow.
\newblock {\em Asymptot. Anal.}, 14(4):377--404, 1997.

\bibitem{Robert12}
M.~Combescure and D.~Robert.
\newblock {\em Coherent states and applications in mathematical physics}.
\newblock Theoretical and Mathematical Physics. Springer, Dordrecht, 2012.

\bibitem{DeGosson11}
M.~A. de~Gosson.
\newblock {\em Symplectic methods in harmonic analysis and in mathematical
  physics}, volume~7 of {\em Pseudo-Differential Operators. Theory and
  Applications}.
\newblock Birkh\"{a}user/Springer Basel AG, Basel, 2011.

\bibitem{Llav03}
R.~de~la Llave.
\newblock A tutorial on {KAM} theory.
\newblock In {\em Smooth ergodic theory and its applications ({S}eattle, {WA},
  1999)}, volume~69 of {\em Proc. Sympos. Pure Math.}, pages 175--292. Amer.
  Math. Soc., Providence, RI, 2001.

\bibitem{Dietert17}
H.~Dietert, J.~Keller, and S.~Troppmann.
\newblock An invariant class of wave packets for the {W}igner transform.
\newblock {\em J. Math. Anal. Appl.}, 450(2):1317--1332, 2017.

\bibitem{Dim_Sjo99}
M.~Dimassi and J.~Sjöstrand.
\newblock {\em Spectral asymptotics in the semi-classical limit}, volume 268 of
  {\em London Mathematical Society Lecture Note Series}.
\newblock Cambridge University Press, Cambridge, 1999.

\bibitem{Nonnenmacher17}
S.~Eswarathasan and S.~Nonnenmacher.
\newblock Strong scarring of logarithmic quasimodes.
\newblock {\em Ann. Inst. Fourier (Grenoble)}, 67(6):2307--2347, 2017.

\bibitem{Gomes18}
S.~Gomes.
\newblock {KAM} {H}amiltonians are not quantum ergodic.
\newblock {\em Preprint. arXiv:1811.07718}, 2018.

\bibitem{Gom_Hass18}
S.~Gomes and A.~Hassell.
\newblock Semiclassical scarring on tori in {KAM} {H}amiltonian systems.
\newblock {\em Preprint. arXiv:1811.11346}, 2018.

\bibitem{Gui81}
V.~Guillemin.
\newblock Band asymptotics in two dimensions.
\newblock {\em Advances in Mathematics}, 42(3):248 -- 282, 1981.

\bibitem{Gui_Ur_Wa12}
V.~Guillemin, A.~Uribe, and Z.~Wang.
\newblock Band invariants for perturbations of the harmonic oscillator.
\newblock {\em J. Funct. Anal.}, 263(5):1435--1467, 2012.

\bibitem{Gui_Ur_Wa15}
V.~Guillemin, A.~Uribe, and Z.~Wang.
\newblock Canonical forms for perturbations of the harmonic oscillator.
\newblock {\em New York Journal of Mathematics}, 21:163 -- 180, 2015.

\bibitem{Ger90}
P.~Gérard.
\newblock Mesures semi-classiques et ondes de {B}loch.
\newblock {\em Séminaire Équations aux dérivées partielles
  (Polytechnique)}, pages 1--19, 1990-1991.

\bibitem{Hagedorn85}
G.~A. Hagedorn.
\newblock Semiclassical quantum mechanics. {IV}. {L}arge order asymptotics and
  more general states in more than one dimension.
\newblock {\em Ann. Inst. H. Poincar\'{e} Phys. Th\'{e}or.}, 42(4):363--374,
  1985.

\bibitem{Hagedorn98}
G.~A. Hagedorn.
\newblock Raising and lowering operators for semiclassical wave packets.
\newblock {\em Ann. Physics}, 269(1):77--104, 1998.

\bibitem{Hall_Hit_Sjo15}
M.~A. Hall, M.~Hitrik, and J.~Sj\"{o}strand.
\newblock Spectra for semiclassical operators with periodic bicharacteristics
  in dimension two.
\newblock {\em Int. Math. Res. Not. IMRN}, (20):10243--10277, 2015.

\bibitem{Hitrik02}
M.~Hitrik.
\newblock Eigenfrequencies for damped wave equations on {Z}oll manifolds.
\newblock {\em Asymptot. Anal.}, 31(3-4):265--277, 2002.

\bibitem{HitrikSjostrand04}
M.~Hitrik and J.~Sj\"ostrand.
\newblock Non-selfadjoint perturbations of selfadjoint operators in 2
  dimensions. {I}.
\newblock {\em Ann. Henri Poincar\'e}, 5(1):1--73, 2004.

\bibitem{HitrikSjostrand05}
M.~Hitrik and J.~Sj\"ostrand.
\newblock Nonselfadjoint perturbations of selfadjoint operators in two
  dimensions. {II}. {V}anishing averages.
\newblock {\em Comm. Partial Differential Equations}, 30(7-9):1065--1106, 2005.

\bibitem{HitrikSjostrand08}
M.~Hitrik and J.~Sj\"ostrand.
\newblock Non-selfadjoint perturbations of selfadjoint operators in two
  dimensions. {III}a. {O}ne branching point.
\newblock {\em Canad. J. Math.}, 60(3):572--657, 2008.

\bibitem{HitrikSjostrand08b}
M.~Hitrik and J.~Sj\"ostrand.
\newblock Rational invariant tori, phase space tunneling, and spectra for
  non-selfadjoint operators in dimension 2.
\newblock {\em Ann. Sci. \'Ec. Norm. Sup\'er. (4)}, 41(4):511--571, 2008.

\bibitem{HitrikSjostrand12}
M.~Hitrik and J.~Sj\"ostrand.
\newblock Diophantine tori and {W}eyl laws for non-selfadjoint operators in
  dimension two.
\newblock {\em Comm. Math. Phys.}, 314(2):373--417, 2012.

\bibitem{HitrikSjostrand18}
M.~Hitrik and J.~Sj\"ostrand.
\newblock Rational invariant tori and band edge spectra for non-selfadjoint
  operators.
\newblock {\em J. Eur. Math. Soc. (JEMS)}, 20(2):391--457, 2018.

\bibitem{HitrikSjostrandVuNgoc07}
M.~Hitrik, J.~Sj\"ostrand, and S.~V\~u Ngoc.
\newblock Diophantine tori and spectral asymptotics for nonselfadjoint
  operators.
\newblock {\em Amer. J. Math.}, 129(1):105--182, 2007.

\bibitem{Jak97}
D.~Jakobson.
\newblock Quantum limits on flat tori.
\newblock {\em Annals of Mathematics}, 145(2):235--266, 1997.

\bibitem{Jak_Zel99}
D.~Jakobson and S.~Zelditch.
\newblock Classical limits of eigenfunctions for some completely integrable
  systems.
\newblock {\em Emerging Applications of Number Theory. Springer New York},
  pages 329--354, 1999.

\bibitem{LazutkinBook}
V.~F. Lazutkin.
\newblock {\em K{AM} theory and semiclassical approximations to
  eigenfunctions}, volume~24 of {\em Ergebnisse der Mathematik und ihrer
  Grenzgebiete (3) [Results in Mathematics and Related Areas (3)]}.
\newblock Springer-Verlag, Berlin, 1993.
\newblock With an addendum by A. I. Shnirelman.

\bibitem{Le_Floch14}
Y.~Le~Floch.
\newblock Singular {B}ohr-{S}ommerfeld conditions for 1{D} {T}oeplitz
  operators: hyperbolic case.
\newblock {\em Anal. PDE}, 7(7):1595--1637, 2014.

\bibitem{MaciaZoll}
F.~Maci{\`a}.
\newblock Some remarks on quantum limits on {Z}oll manifolds.
\newblock {\em Comm. Partial Differential Equations}, 33(4-6):1137--1146, 2008.

\bibitem{Mac09}
F.~Maci{\`a}.
\newblock Semiclassical measures and the {S}chr{\"o}dinger flow on {R}iemannian
  manifolds.
\newblock {\em Nonlinearity \textbf{22}}, pages 1003--1020, 2009.

\bibitem{MaciaTorus}
F.~Maci\`{a}.
\newblock High-frequency propagation for the {S}chr\"odinger equation on the
  torus.
\newblock {\em J. Funct. Anal.}, 258(3):933--955, 2010.

\bibitem{MaciaRiviere16}
F.~Maci\`a and G.~Rivi\`ere.
\newblock Concentration and non-concentration for the {S}chr\"odinger evolution
  on {Z}oll manifolds.
\newblock {\em Comm. Math. Phys.}, 345(3):1019--1054, 2016.

\bibitem{Mac_Riv18}
F.~Maci\`a and G.~Rivi\`ere.
\newblock Two-microlocal regularity of quasimodes on the torus.
\newblock {\em Anal. PDE}, 11(8):2111--2136, 2018.

\bibitem{Mac_Riv19}
F.~Maci\`a and G.~Rivi\`ere.
\newblock Observability and quantum limits for the {S}chr\"{o}dinger equation
  on {$\Bbb{S}^d$}.
\newblock In {\em Probabilistic methods in geometry, topology and spectral
  theory}, volume 739 of {\em Contemp. Math.}, pages 139--153. Amer. Math.
  Soc., Providence, RI, 2019.

\bibitem{Mos70}
J.~Moser.
\newblock Regularization of {K}epler's problem and the averaging method on a
  manifold.
\newblock {\em Comm. Pure Appl. Math.}, 23:609--636, 1970.

\bibitem{Paul_Uribe93}
T.~Paul and A.~Uribe.
\newblock A construction of quasi-modes using coherent states.
\newblock {\em Ann. Inst. H. Poincar\'{e} Phys. Th\'{e}or.}, 59(4):357--381,
  1993.

\bibitem{Pop00I}
G.~Popov.
\newblock Invariant tori, effective stability, and quasimodes with
  exponentially small error terms. {I}. {B}irkhoff normal forms.
\newblock {\em Ann. Henri Poincar\'e}, 1(2):223--248, 2000.

\bibitem{Pop00II}
G.~Popov.
\newblock Invariant tori, effective stability, and quasimodes with
  exponentially small error terms. {II}. {Q}uantum {B}irkhoff normal forms.
\newblock {\em Ann. Henri Poincar\'e}, 1(2):249--279, 2000.

\bibitem{RalstonQuasimodes}
J.~V. Ralston.
\newblock On the construction of quasimodes associated with stable periodic
  orbits.
\newblock {\em Comm. Math. Phys.}, 51(3):219--242, 1976.

\bibitem{Russ74}
H.~R\"{u}ssmann.
\newblock On optimal estimates for the solutions of linear partial differential
  equations of first order with constant coefficients on the torus.
\newblock In {\em Dynamical systems, theory and applications ({R}encontres,
  {B}attelle {R}es. {I}nst., {S}eattle, {W}ash., 1974)}, pages 598--624.Lecture
  Notes in Phys., Vol. 38. 1975.

\bibitem{Sj92}
J.~Sj\"{o}strand.
\newblock Semi-excited states in nondegenerate potential wells.
\newblock {\em Asymptotic Anal.}, 6(1):29--43, 1992.

\bibitem{TothIntegrable}
J.~A. Toth.
\newblock On the quantum expected values of integrable metric forms.
\newblock {\em J. Differential Geom.}, 52(2):327--374, 1999.

\bibitem{Troppmann2017}
S.~Troppmann.
\newblock Non-{H}ermitian {S}chrödinger dynamics with {H}agedorn’s wave
  packets.
\newblock {\em Ph.D. Dissertation, Fakultät für Mathematik Lehrstuhl für
  wissenschaftliches Rechnen, Fakultät für Mathematik der Technischen
  Universität München}, 2017.

\bibitem{Ur85}
A.~Uribe.
\newblock Band invariants and closed trajectories on {$S^n$}.
\newblock {\em Adv. in Math.}, 58(3):285--299, 1985.

\bibitem{UV08}
A.~Uribe and C.~Villegas-Blas.
\newblock Asymptotics of spectral clusters for a perturbation of the hydrogen
  atom.
\newblock {\em Comm. Math. Phys.}, 280(1):123--144, 2008.

\bibitem{VasyWunsch09}
A.~Vasy and J.~Wunsch.
\newblock Semiclassical second microlocal propagation of regularity and
  integrable systems.
\newblock {\em J. Anal. Math.}, 108:119--157, 2009.

\bibitem{VasyWunsch11}
A.~Vasy and J.~Wunsch.
\newblock Erratum to: ``{S}emiclassical second microlocal propagation of
  regularity and integrable systems'' [mr2544756].
\newblock {\em J. Anal. Math.}, 115:389--391, 2011.

\bibitem{WeinsteinZoll}
A.~Weinstein.
\newblock Asymptotics of eigenvalue clusters for the {L}aplacian plus a
  potential.
\newblock {\em Duke Math. J.}, 44(4):883--892, 1977.

\bibitem{Whit43}
H.~Whitney.
\newblock Differentiable even functions.
\newblock {\em Duke Math. J.}, 10:159--160, 1943.

\bibitem{Wunsch2012}
J.~Wunsch.
\newblock Non-concentration of quasimodes for integrable systems.
\newblock {\em Comm. Partial Differential Equations}, 37(8):1430--1444, 2012.

\bibitem{ZelditchGB}
S.~Zelditch.
\newblock Gaussian beams on {Z}oll manifolds and maximally degenerate
  {L}aplacians.
\newblock In {\em Spectral theory and partial differential equations}, volume
  640 of {\em Contemp. Math.}, pages 169--197. Amer. Math. Soc., Providence,
  RI, 2015.

\bibitem{Zwobook}
M.~Zworski.
\newblock {\em Semiclassical analysis}, volume 138 of {\em Graduate Studies in
  Mathematics}.
\newblock American Mathematical Society, Providence, RI, 2012.

\end{thebibliography}
\end{document}